\newtheorem{thm}{Theorem}[section]
\newcommand{\bt}{\begin{thm}}
\newcommand{\et}{\end{thm}}
\newtheorem{ex}[thm]{Example}
\newtheorem{cor}[thm]{Corollary}   
\newcommand{\bc}{\begin{cor}}
\newcommand{\ec}{\end{cor}}
\newtheorem{lem}[thm]{Lemma}   
\newcommand{\bl}{\begin{lem}}
\newcommand{\el}{\end{lem}}
\newtheorem{prop}[thm]{Proposition}
\newcommand{\bp}{\begin{prop}}
\newcommand{\ep}{\end{prop}}
\newtheorem{defn}[thm]{Definition}
\newtheorem{conj}[thm]{Conjecture}
\newcommand{\bd}{\begin{defn}}    
\newcommand{\ed}{\end{defn}}
\newtheorem{rmrk}[thm]{Remark}   
\newcommand{\br}{\begin{rmrk}}
\newcommand{\er}{\end{rmrk}}
\newtheorem{example}[thm]{Example}
\newcommand{\grad}{\nabla}
\newcommand{\be}{\begin{equation}}
\newcommand{\ee}{\end{equation}}
\newcommand{\N}{\mathbb{N}}
\newcommand{\R}{\mathbb{R}}
\newcommand{\E}{\mathbb{E}}
\newcommand{\diam}{\operatorname{diam}}
\newcommand{\disjointunion}{\sqcup}
\newcommand{\vol}{\operatorname{Vol}}
\def\rr{\mathbb{R}}
\def\rmin{r_{\mathrm{min}}}
\def\iff{\Longleftrightarrow}
\def\too{\longrightarrow}
\def\madm{\mathrm{m}_{\mathrm{ADM}}}
\def\rmin{r_{\textrm{min}}}
\DeclareMathOperator{\RS}{RotSym}
\begin{document}

\title[Stability of the Positive Mass Theorem]{Stability of the Positive Mass Theorem 
for Rotationally Symmetric Riemannian Manifolds}

\author{Dan A. Lee}
\thanks{Lee is partially supported by a PSC CUNY Research Grant and NSF DMS \#0903467.}
\address{CUNY Graduate Center and Queens College}
\email{dan.lee@qc.cuny.edu}

\author{Christina Sormani}
\thanks{Sormani is partially supported by a PSC CUNY Research Grant and NSF DMS \#1006059.}
\address{CUNY Graduate Center and Lehman College}
\email{sormanic@member.ams.org}

\date{}

\keywords{}

\begin{abstract}
We study the stability of the Positive Mass Theorem 
using the Intrinsic Flat Distance.
In particular 
we consider the class of complete asymptotically flat rotationally symmetric
Riemannian manifolds with nonnegative scalar curvature and no interior closed minimal surfaces whose boundaries are either outermost minimal hypersurfaces or are empty.  
We prove that a sequence of these manifolds whose ADM masses converge to zero must converge to Euclidean space in the pointed Intrinsic Flat sense.  In fact
we provide explicit bounds on the Intrinsic Flat Distance between annular regions in the manifold and annular regions in Euclidean space by
constructing an explicit filling manifold and estimating its volume.
In addition, we include a variety of propositions
that can be used to estimate the Intrinsic Flat
distance between Riemannian manifolds without
rotationally symmetry.   Conjectures
regarding the Intrinsic Flat stability of the Positive Mass Theorem
in the general case are proposed in the final section.
\end{abstract}

\maketitle

\newpage

\section{Introduction}

The Positive Mass Theorem states that any complete asymptotically flat manifold of nonnegative scalar curvature has nonnegative ADM mass.   
Furthermore, if the ADM mass is zero, then the manifold must be Euclidean space.  The second statement may be thought of as a rigidity theorem, and it is natural to consider the \emph{stability} of this rigidity statement.  That is, if the ADM mass is small, 
in what sense can we say that the manifold is ``close'' to Euclidean space?  This is known to be a subtle question for many reasons.  

The ADM mass was defined by Arnowitt-Deser-Misner in
\cite{ADM-mass} and the Positive Mass Theorem was first proven in the
rotationally symmetric case by
physicists Jang, Leibovitz and Misner in
\cite{Jang-positive-energy}
\cite{Leibovitz-1970} \cite{Misner-astro}.  The general three dimensional
case was proven by Schoen-Yau, and later by Witten
using spinors
\cite{Schoen-Yau-positive-mass} \cite{Witten-positive-mass}.  
Schoen and Yau's proof generalizes to dimensions $< 8$
(see \cite{Schoen-1989})
using Bartnik's higher dimensional ADM mass \cite{Bartnik-1986}, while Witten's proof holds on all spin manifolds (c.f. \cite{Schoen-1989}).

The problem of stability for the Positive Mass Theorem has been studied by the first author in \cite{Lee-near-equality}, by Finster with Bray and Kath
in \cite{Bray-Finster}, \cite{Finster-Kath} \cite{Finster} and 
by Corvino in \cite{Corvino}.  
The work of Finster and his collaborators mainly focuses on using the ADM mass to obtain $L^2$ bounds on curvature.  Corvino proves that
with uniform bounds on sectional curvature, a manifold with small
enough ADM mass is diffeomorphic to Euclidean space.
The present work complements the results of \cite{Lee-near-equality}.   That article dealt with convergence to Euclidean space outside some compact set.   In this paper we tackle the much harder problem of trying to understand what happens inside the compact set.  We place no assumptions on sectional
curvature, so it is possible for the manifolds to have boundary inside
the compact region.  Because we expect the general problem to be difficult, we focus on the simple case of rotationally symmetric manifolds and state a more general conjecture at the end of the paper.

One serious concern is that even if the ADM mass is small, there can be arbitrarily deep ``gravity wells.''  See Figure \ref{fig-thm-pted}
and Example~\ref{ex-deep-well}.  As the ADM mass approaches zero, these deep gravity wells do not converge to Euclidean space in any conventional sense, including the pointed Gromov-Hausdorff sense.  For this reason, we turn to the Intrinsic Flat Distance between
Riemannian manifolds, a notion developed by the second author and S.\ Wenger which can be controlled using
volumes and filling volumes \cite{SorWen2}.

\begin{figure}[h] 
   \centering
   \includegraphics[width=5in]{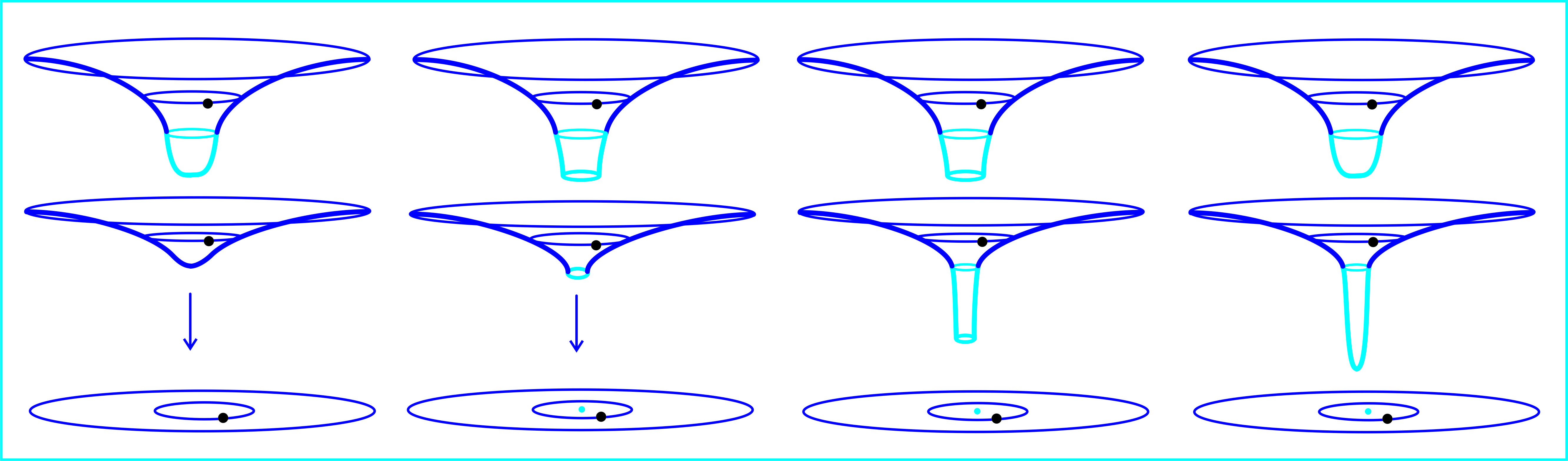} 
   \caption{Four sequences of asymptotically flat manifolds of nonnegative scalar curvature whose ADM masses converge to zero.
      }
   \label{fig-thm-pted}
\end{figure}

The Intrinsic Flat Distance is defined and studied in \cite{SorWen2} by applying sophisticated ideas of Ambrosio-Kirchheim \cite{AK}
extending earlier work of Federer-Fleming \cite{FF} and
Whiney\cite{Whitney}.  While the definition of the Intrinsic Flat Distance involves abstract metric spaces and geometric measure theory, for the present work we can restrict our attention to Riemannian manifolds.   Given two compact orientable Riemannian manifolds $M^m_1$ and $M^m_2$ with boundary, and metric isometric embeddings $\psi_i: M_i \to Z$ into some Riemannian
manifold (possibly piecewise smooth with corners), $Z$, 
an upper bound for the Intrinsic Flat Distance is attained as follows:
\be \label{eqn-def-intrinsic-flat-1}
d_{\mathcal{F}}(M^m_1, M^m_2) \le \vol_{m+1}(B^{m+1}) +\vol_m(A^m)
\ee
where $B^{m+1}$ is an oriented region in $Z$ and $A^m$ is
defined so that
\be \label{eqn-def-intrinsic-flat-2}
\int_{\psi_1(M_1)}\omega -\int_{\psi_2(M_2)}\omega=\int_{\partial B}\omega + \int_A\omega 
\ee
for any $m$ differential form $\omega$ on $Z$.   We call
$B^{m+1}$ a {\em filling manifold} between $M_1$ and $M_2$
and $A^m$ the {\em excess boundary}.
A metric isometric embedding, 
$\psi: M \to Z$ is a map such that
\be\label{eqn-def-metric-isom-embed}
d_Z(\psi(x), \psi(y)) = d_M(x,y) \qquad \forall x,y \in M.
\ee
This is significantly stronger than a Riemannian isometric embedding
which preserves only the Riemannian structure and thus lengths
of curves but not distances between points as in 
(\ref{eqn-def-metric-isom-embed}).

Our main results concern rotationally symmetric Riemannian manifolds
of dimension $3$ and up:

\begin{defn}  \label{def-rot-sym}
Given $m\ge 3$,
let $\RS_m$ be the class of complete $m$-dimensional 
rotationally symmetric Riemannian manifolds
of nonnegative scalar curvature with no closed interior
minimal hypersurfaces which either have no boundary or have a boundary
which is a stable minimal hypersurface.
\end{defn}

This class of spaces includes the classical rotationally symmetric
gravity wells and Schwarzschild spaces.   The boundary, when it
exists, is called the ``apparent horizon" of a black hole.  
Nonnegative scalar curvature may be viewed as a 
physical notion of nonnegative mass density.
The ADM mass of such a manifold exists when it is
asymptotically flat and intuitively records the total mass of the space
as a physical system.   We review the scalar curvature and ADM mass
of manifolds in this class within the paper.  

The condition regarding minimal hypersurfaces is included here (just
as it is in the Penrose Inequality) because
complicated geometry can ``hide'' behind a minimal hypersurface
without affecting the ADM mass (c.f. \cite{Gibbons} \cite{Huisken-Ilmanen}).
Note that we need not explicitly assume asymptotic flatness here 
because finite ADM mass in $\RS_m$ implies asymptotic flatness.

\begin{thm}\label{thm-main}   
Given any $\epsilon>0$, $D>0$, $A_0>0$, $m\in \N$
there exists a $\delta=\delta(\epsilon, D, A_0, m)>0$
such that if $M^m\in\RS_m$ has ADM mass 
$\madm(M)<\delta$ and 
$\E^m$ is Euclidean space of the same dimension,
then
\be
 d_{\mathcal{F}}(\,T_D(\Sigma_0)\subset M^m\,,\, T_D(\Sigma_0)\subset \E^m\,)\,<\, 
 \epsilon. 
 \ee
 where  $\Sigma_0$ is the symmetric sphere of area  
$\vol_{m-1}(\Sigma_0)=A_0$, and $T_D(\Sigma)$ is the 
tubular neighborhood of radius $D$ around $\Sigma_0$.   
 \end{thm}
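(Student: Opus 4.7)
My plan is to construct an explicit filling manifold $B^{m+1}$ in which metric-isometric copies of both $T_D(\Sigma_0)\subset M$ and $T_D(\Sigma_0)\subset \E^m$ sit close together, and to bound both $\vol_{m+1}(B)$ and the excess boundary $\vol_m(A)$ by quantities tending to zero as $\madm(M)\to 0$.

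First I would place both manifolds in area-radius coordinates $s$, normalized so that the spherical slice at parameter $s$ has area $\omega_{m-1}s^{m-1}$. In these coordinates every $M\in\RS_m$ (outside its horizon, if present) takes the form $g_M = h(s)^2\, ds^2 + s^2 g_{S^{m-1}}$, where $h$ is determined by the Hawking mass $m(s)$ through a Schwarzschild-type identity $h(s)^2 = (1 - 2m(s)/s^{m-2})^{-1}$ up to dimensional constants. Nonnegative scalar curvature together with the absence of interior minimal hypersurfaces forces $m(s)$ to be nondecreasing and bounded above by $\madm(M)<\delta$, so $h\ge 1$ and one has the pointwise bound $0\le h(s)-1\le C_m\delta/s^{m-2}$ whenever $s$ is bounded below by $(4\delta)^{1/(m-2)}$. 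Setting $s_0 = (A_0/\omega_{m-1})^{1/(m-1)}$, the Euclidean annulus occupies $s\in[s_-,s_+]=[s_0-D,s_0+D]$ (truncated at $0$) while the $M$ annulus occupies $s\in[\tilde s_-,\tilde s_+]$ with $\int_{\tilde s_-}^{s_0}h = \int_{s_0}^{\tilde s_+}h = D$; since $h\ge 1$, the $M$ interval is nested inside the Euclidean one, with gaps controlled by $\int (h-1)\, ds$.

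For the filling I would take $Z = [\tilde s_-,\tilde s_+]\times[0,L]\times S^{m-1}$ with the warped metric
$$g_Z \;=\; F(s,u)^2\, ds^2 \;+\; du^2 \;+\; s^2 g_{S^{m-1}}, \qquad F(s,u) \;=\; 1 + (u/L)(h(s)-1),$$
so that the slice $u=0$ pulls back the Euclidean metric on $[\tilde s_-,\tilde s_+]$ and $u=L$ pulls back the $M$ metric. Because $F\ge 1$ everywhere, the slice $u=0$ is automatically metric-isometrically embedded; the most delicate step is forcing the slice $u=L$ to also be metric-isometrically embedded, and a direct path comparison shows it suffices to take $L\ge D\cdot\sup_{[\tilde s_-,\tilde s_+]}(h-1)$ so that no $u$-excursion from the $M$ slice can shortcut the $s$-direction. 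With $B=Z$, the excess boundary $A$ then consists of the missing Euclidean caps $[s_-,\tilde s_-]\cup[\tilde s_+,s_+]$ together with the two lateral cylindrical tubes $\{s=\tilde s_\pm\}\times[0,L]\times S^{m-1}$. A direct integration gives $\vol_{m+1}(B)\le L\cdot\vol_m(T_D(\Sigma_0)\subset M)$ and $\vol_m(A)\le C\bigl(L\cdot s_+^{m-1}\omega_{m-1}+\textstyle\int(h-1)\, s_+^{m-1}\omega_{m-1}\bigr)$; both are $O(\delta)$ on the region where $s$ stays bounded below by a positive constant.

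The main obstacle is the case in which the annulus dips into a region where the pointwise bound on $h-1$ fails, either because $D$ is comparable to $s_0$ (so that the Euclidean annulus reaches near the origin) or because $M$ contains a tight horizon or deep gravity well in which $h$ blows up. To handle it, I would split the $s$-interval at a threshold $s_*$ chosen so that $h(s)-1\le\eta$ for $s\ge s_*$; this forces $s_*^{m-2}\sim\delta/\eta$. On the outer part $s\ge s_*$ the warped-product construction above delivers a filling of volume $O(\eta\, D\, s_+^{m-1})$, while the inner part $s\in[\tilde s_-,s_*]$ has volume at most $D\cdot\omega_{m-1}s_*^{m-1}$, since its geodesic depth is bounded by $D$ and its cross-sectional area is at most $\omega_{m-1}s_*^{m-1}$; hence the inner part can be absorbed entirely into the excess boundary via the triangle inequality for $d_{\mathcal F}$. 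Optimizing $\eta$ in terms of $\delta$ (for example $\eta=\delta^{1/2}$) makes both contributions vanish as $\delta\to 0$, and choosing $\delta$ small enough in terms of $\epsilon, D, A_0, m$ then yields $d_{\mathcal F}(T_D(\Sigma_0)\subset M,\, T_D(\Sigma_0)\subset\E^m)<\epsilon$, completing the argument.
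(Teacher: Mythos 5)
Your filling via the warped product $g_Z = F(s,u)^2\,ds^2 + du^2 + s^2 g_{S^{m-1}}$ with $F(s,u)=1+(u/L)(h(s)-1)$ is an attractive alternative to the paper's graph-in-$\E^{m+1}$ construction, and most of your surrounding scaffolding (area-radius coordinates, the Hawking-mass bound on $h-1$, cutting off the deep well at $s_*$, absorbing the inner region into the excess boundary) matches the paper's Lemmas~\ref{lem-well}--\ref{lem-switch-2} closely. But the step you flag as ``the most delicate'' is where the argument breaks: the slice $u=L$ is \emph{not} a metric isometric embedding into $(Z,g_Z)$, and no choice of $L$ repairs this.

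To see the failure, specialize to $h(s)\equiv 1+\eta$ and suppress the spherical factor, so $g_Z=F(u)^2\,ds^2+du^2$ on $[0,S]\times[0,L]$. Translation invariance in $s$ gives the Clairaut-type first integral $F^2\dot s=c$, and unit-speed geodesics entering the interior with parameter $c\in(1,1+\eta)$ dip to depth $u_0$ with $F(u_0)=c$, then return to $u=L$. A direct computation yields
\be
T=\tfrac{2L}{\eta}\sqrt{(1+\eta)^2-c^2},\qquad
S=\tfrac{2L}{\eta}\arccos\!\bigl(\tfrac{c}{1+\eta}\bigr),
\ee
while the competing curve that stays on $u=L$ has length $(1+\eta)S$. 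Writing $\theta=\arccos(c/(1+\eta))$, one finds $T=(1+\eta)\tfrac{2L}{\eta}\sin\theta < (1+\eta)\tfrac{2L}{\eta}\theta=(1+\eta)S$ for every $\theta\in(0,\pi/2)$. So \emph{every} interior geodesic between points of the top slice beats the slice-intrinsic distance: $d_Z<d_M$ strictly. The shortfall is of order $L\sqrt{\eta}$ (the worst case $c=1$ gives $(1+\eta)S-T\sim \tfrac{2\sqrt{2}}{3}L\sqrt{\eta}$ over an $s$-span $\sim L/\sqrt{\eta}$), and with your choice $L\sim D\eta$ this span $\sim D\sqrt{\eta}$ fits inside $T_D(\Sigma_0)$, so the problem actually occurs in your annulus. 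Your ``direct path comparison'' only rules out the piecewise path that drops straight to $u=0$, travels horizontally, and climbs back up; the dangerous competitors are the smoothly dipping geodesics, and your bound $L\ge D\sup(h-1)$ does not control them.

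This is precisely the subtlety the paper's argument is built to handle: Lemma~\ref{lem-graph} only gives a \emph{Riemannian} isometric embedding, and Theorem~\ref{thm-embed-const} together with Theorem~\ref{thm-Z} quantifies the ``embedding constant'' defect $C_M$ and then corrects it by attaching a flat strip of width $S_M=\sqrt{C_M(\diam M+C_M)}$ before declaring anything metrically isometric. Your construction omits this correction. Since the defect you incur is $\sim L\sqrt{\eta}$ and the resulting strip width is $\sim D\eta^{3/4}$, which still tends to $0$, your approach can in principle be repaired by grafting on the paper's strip (Proposition~\ref{embed-const-2} applied to the warped $Z$), but as written the claim that $\psi_2:T_D(\Sigma_0)\subset M\to Z$ is a metric isometric embedding is false, and the bound $d_\mathcal{F}\le\vol_{m+1}(B)+\vol_m(A)$ is therefore not available.
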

 
See Remark~\ref{rmrk-scaling} concerning the fact that the
flat distance does not scale with the metric
on the manifolds.   In the proof precise estimates
on $\delta(\epsilon, A_0, D, m)$ are provided.
 
Applying Ambrosio-Kirchheim's Slicing Theorem as in \cite{SorWen1},
we then have the following immediate corollary:

\begin{cor}\label{thm-pted}
Let $M_j^m$ be a
sequence in $\RS_m$. 
Fix an area $A_0$, and choose $p_j \in \Sigma_j$ 
to lie on the symmetric sphere $\Sigma_j\subset M^m_j$
of area $\vol_{m-1}(\Sigma_j)=A_0$.  
If $\mathrm{m}_{\mathrm{ADM}}(M_j)$ converges to 
$0$
then $(M_j^m, p_j)$ converges to Euclidean space $(\E^m,0)$ 
in the pointed intrinsic flat sense.  That is, for almost any $D>0$
there exists $D_j \to D$ such that
$B_{p_j}(D_j)\subset M_j^m$ converges in the intrinsic flat sense to $B_0(D)\subset\E^m$.
\end{cor}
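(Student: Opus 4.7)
The plan is to derive the corollary from \thmref{thm-main} via the Ambrosio--Kirchheim slicing theorem, in the same manner as in \cite{SorWen1}. Write $\Sigma_0\subset\E^m$ for the round sphere of area $A_0$ centered at the origin, and let $r_0$ be its Euclidean radius. Fix $D>0$ and set $D^\star := D+r_0+1$; this choice ensures that for every $j$ the ball $B_{p_j}(D)$ lies inside the tubular neighborhood $T_{D^\star}(\Sigma_j)$. Given $\vare_k\to 0$, \thmref{thm-main} applied with $D^\star$ in place of $D$ produces, for all $j$ sufficiently large, metric isometric embeddings
\[
\psi_j:T_{D^\star}(\Sigma_j)\hookrightarrow Z_j,\qquad \psi_j^0:T_{D^\star}(\Sigma_0)\hookrightarrow Z_j,
\]
into a common metric space $Z_j$, together with filling currents whose masses tend to zero.

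Because $M_j^m$, $\E^m$, and the explicit filling manifolds built in \thmref{thm-main} are all rotationally symmetric, we may compose $\psi_j$ with a rotation of $Z_j$ so that $\psi_j(p_j) = \psi_j^0(p_\infty)$ for a fixed reference point $p_\infty\in\Sigma_0\subset\E^m$. Let $f_j(z):=d_{Z_j}(z,\psi_j(p_j))$, which is $1$-Lipschitz on $Z_j$. Applying the Ambrosio--Kirchheim slicing theorem to the difference of the currents associated with $T_{D^\star}(\Sigma_j)$ and $T_{D^\star}(\Sigma_0)$, together with the standard identity
\[
\partial(T\rstr\{f_j<R\}) \;=\; (\partial T)\rstr\{f_j<R\} \;+\; \langle T,f_j,R\rangle,
\]
we conclude that for almost every $R\in(0,D)$ the sublevel-set restriction of the difference current has flat mass converging to zero. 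Since the choice of $D^\star$ places $B_{p_j}(R)$ inside $T_{D^\star}(\Sigma_j)$ and $B_{p_\infty}(R)$ inside $T_{D^\star}(\Sigma_0)$, this yields
\[
d_{\mathcal F}\!\left(B_{p_j}(R)\subset M_j^m,\; B_{p_\infty}(R)\subset\E^m\right)\longrightarrow 0\qquad\text{for a.e. } R\in(0,D).
\]

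Finally, by the homogeneity of Euclidean space, $(B_{p_\infty}(R),p_\infty)$ is isometric as a pointed Riemannian manifold to $(B_0(R),0)$, so a standard diagonal extraction produces the required radii $D_j\to D$ along which $B_{p_j}(D_j)$ converges in the intrinsic flat sense to $B_0(D)$. The main technical obstacle is the basepoint alignment: one must exploit the full rotational symmetry of both the ambient manifolds and the filling construction of \thmref{thm-main} in order to arrange that $\psi_j$ and $\psi_j^0$ identify the chosen points inside the common metric space $Z_j$. Granted this alignment, the passage from tubular neighborhoods to metric balls is a direct application of the slicing theorem, as in \cite{SorWen1}.
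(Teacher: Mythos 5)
Your overall strategy -- apply \thmref{thm-main} to a tubular neighborhood large enough to contain the metric ball, then slice the filling current with the distance function from the basepoint, finishing with the homogeneity of $\E^m$ and a diagonal argument -- is the right idea and is exactly what the paper is gesturing at when it says the corollary follows by ``Applying Ambrosio-Kirchheim's Slicing Theorem as in \cite{SorWen1}.'' However, the step you single out as ``the main technical obstacle'' is resolved incorrectly, and the fix you propose cannot work.

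You claim that, by rotational symmetry, one may compose $\psi_j$ with a rotation of $Z_j$ so that $\psi_j(p_j)=\psi_j^0(p_\infty)$. This is impossible. In the filling space $Z_j$ built in the proof of \thmref{thm-main}, the images $\psi_j(T_{D^\star}(\Sigma_j))$ and $\psi_j^0(T_{D^\star}(\Sigma_0))$ are disjoint by construction: $\psi_j^0$ lands on the hyperplane $\{z=0\}\subset\partial B_1$ while $\psi_j$ lands at the top of the strip $B_2=[0,S_M]\times r^{-1}[r_\epsilon,r_{D+}]$, with the interior of $B_1\cup B_2$ in between. The isometry group of $Z_j$ (the $SO(m)$ action) preserves the orbit stratification, so it carries $\psi_j(\Sigma_j)$ to itself and can never move it onto the distinct orbit $\psi_j^0(\Sigma_0)$. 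No rotation makes the two basepoints coincide.

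What is true, and what you should use instead, is that after rotating so that $p_j$ and $p_\infty$ sit at the same angular coordinate, one has $\eta_j:=d_{Z_j}\bigl(\psi_j(p_j),\,\psi_j^0(p_\infty)\bigr)\le S_M+\bigl(F(r_0)-F(r_\epsilon)\bigr)\le S_M+(r_0-r_\epsilon)\,Q(\delta_j,r_\epsilon)\to 0$ as $\delta_j\to 0$, by \lemref{lem-width} and \lemref{lem-Q}. Then slice with $\rho_j(z)=d_{Z_j}(z,\psi_j^0(p_\infty))$: the sublevel set in the Euclidean copy is exactly $B_{p_\infty}(R)$, while the sublevel set in the $M_j$ copy is squeezed between $B_{p_j}(R-\eta_j)$ and $B_{p_j}(R+\eta_j)$, and the intervening annulus has mass controlled by $\eta_j\cdot\omega_{m-1}(r_0+D^\star)^{m-1}\to 0$. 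Combining this squeeze with the coarea/Chebyshev bound on the mass of the slices $\slice{B_j}{\rho_j}{R}$ recovers the claimed intrinsic flat convergence for a.e.\ $R\in(0,D)$, which is also where the freedom to take $D_j\to D$ rather than $D_j\equiv D$ is genuinely used. So the gap is specific and local -- replace exact alignment by the estimate $\eta_j\to 0$ and a nesting argument -- but as written your proof rests on an impossible claim.
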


Throughout the paper, we provide techniques which can be used in a more
general setting to bound the Intrinsic Flat Distance using Riemannian
methods rather than Geometric Measure Theory.   These may be
applied to solve some of the open problems in our final section or
even problems which do not involve scalar curvature.

In Section~\ref{Sect-Positive} we review rotationally symmetric manifolds with nonegative scalar curvature, the monotonicity of Hawking mass 
and the definition of ADM mass.  There we describe a well known
Riemannian isometric embedding of these manifolds as
graphs in Euclidean space [Lemma~\ref{lem-graph}] and
review the monotonicity of Hawking mass [Theorem~\ref{thm-monotonicity}].
We control the diameter of the boundary in terms of the
ADM mass [Lemma~\ref{lem-rmin}] and the slope of the
graph [Lemma~\ref{lem-F'}] in terms of the ADM mass.
We conclude the section with classical rotationally symmetric 
examples including those 
depicted in Figure~\ref{fig-thm-pted} [Examples~\ref{ex-Sch-to-0} 
and~\ref{ex-deep-well}].

In Section~\ref{Sect-Embed} we prove a variety of propositions
about metric isometric embeddings and estimates on the intrinsic
flat distance.  This includes Theorem~\ref{thm-warp} using warped products to
construct metric isometric embeddings and Theorem~\ref{thm-convex}  regarding
the construction of metric isometric embeddings from convex embeddings.
The later theorem may be useful to those studying quasilocal mass.
Theorem~\ref{thm-embed-const} provides a general method for constructing a metric
isometric embedding from a Riemannian isometric embedding
using an ``embedding constant".  This theorem is applied to bound
the Intrinsic Flat Distance as a function of the embedding constant
in Propositions~\ref{embed-const} 
and~\ref{embed-const-2}.  Theorem~\ref{thm-Z}
provides a bound on the embedding constant when the Riemannian
isometric embedding is a graph over a manifold with boundary.
See also Remark~\ref{rmrk-embed-const}.

In Section~\ref{Sect-Pos} we
prove Theorem~\ref{thm-main}. 
See Figure~\ref{fig-flat-filling} for a depiction of the explicit
filling manifold and excess boundary used in the
proof.  Lemma~\ref{lem-well} determines where to
cut off a possibly deep well in the estimate.  Then 
the earlier theorems and lemmas are applied to
prove we have metric isometric embeddings and
to estimate the volumes.

In Section~\ref{Sect-Gromov} we review Gromov-Hausdorff
convergence.  We provide a new method for estimating
the Gromov-Hausdorff distance using embedding constants,
[Propositions~\ref{embed-const-3} and~\ref{embed-const-4}]
and apply them to construct explicit examples demonstrating
that even with an assumption on rotational symmetry,
the Positive Mass Theorem is not stable with respect to the
Gromov-Hausdorff distance [Example~\ref{ex-not-GH}] due
to the existence of thin deep wells.   This section closes with
an example of
a sequence of 3 dimensional manifolds
with positive scalar curvature with no rotational symmetry
whose ADM mass converges to
0 but has no subsequence converging in the Gromov-Hausdorff
sense to any space due to the existence of an increasingly
dense collection of wells [Example~\ref{ex-noncompact}].  Nevertheless
this sequence converges in the Intrinsic Flat sense to Euclidean space.

In Section~\ref{Sect-Open} we discuss the general
question of asymptotically flat Riemannian 
manifolds, $M^m$, of positive scalar curvature with no interior
minimal surfaces that either have an outermost
minimizing boundary or no boundary.  
We close the paper with conjectures and open
problems concerning various subclasses of such manifolds and
the stability of the Positive Mass Theorem for those subclasses.  We
hope that some of our more general theorems regarding the
Intrinsic Flat Distance will prove useful to those attempting these
problems.

The authors would like to thank Jim Isenberg and
Jack Lee for organizing the Pacific Northwest
Geometry seminar and for requesting a collection of open problems.
The first author would like to thank Hubert Bray for various thought-provoking conversations on the near equality cases of the Positive Mass Theorem.    The second author would like to thank 
Tom Ilmanen for recommending the development of a new convergence 
to handle problems involving scalar curvature many years ago, 
Jeff Cheeger for requesting a section be included to illucidate why
Gromov-Hausdorff convergence is unsuited for these problems and 
Lars Andersson for his recent suggestion of a need for a scalable
Intrinsic Flat Distance which lead to the final open problem
listed in this paper.

\section{Positive Scalar Curvature, ADM Mass and Asymptotic Flatness}
\label{Sect-Positive}

In the first subsection we briefly review the properties of the
manifolds in $\RS_m$ and  
the key formulas defining their ADM mass.
In the next subsection we embed the manifold into 
Euclidean space as a graph and review the Positive Mass Theorem and 
the monotonicity of the Hawking mass.  In the third
subsection we explore geometric implications of
having a small ADM mass proving key lemmas
which will be applied later to prove the stability of the
positive mass theorem.    We close with a subsection providing 
key rotationally symmetric examples.

\subsection{Setting}\label{ss-setting}

In this paper we consider manifolds $(M^m,g)\in \RS_m$
defined in Definition~\ref{def-rot-sym}.
Since such a manifold is rotationally symmetric 
we can write its metric in
geodesic coordinates, as $g=ds^2 + f(s)^2 g_0$ for some function 
$f:[0,\infty) \to [0,\infty)$
where $g_0$ is the standard metric on the $(m-1)$-sphere
and $s$ is either the distance from the pole, $p_0$, or from the boundary, 
$\partial M$.   

Let $\Sigma'_s$ be a level set of this distance function at a distance
$s$ from the pole or boundary.
We then have the following formulae for the ``area" and mean curvature  
of $\Sigma'_s$:
\be \label{A-s}
\mathrm{A}(s) =\vol_{m-1}(\Sigma_s)=\omega_{m-1} f^{m-1}(s)
\ee
\be\label{H-s}
\mathrm{H}(s)  = \frac{(m-1)f'(s)}{f(s)}
\ee
Thus $\Sigma'_s$ provide a CMC foliation of the manifold.   

Let $r_{min}=f(0)$.   When $\partial M=\emptyset$ then $f(0)=r_{min}=0$ and $f(s)\ge f(0)$ by smoothness at the pole.
When $\partial M \neq \emptyset$ the definition of
$\RS$ states that  $\partial M$ is a stable
minimal surface so $f(0)=r_{min}>0$ and $f'(0)=0$ and $f(s)\ge f(0)$
in that case as well. 

The definition of $\RS$
also requires that $M^m$ has no interior minimal surfaces,
so by (\ref{H-s}), we have
\be
f'(s)\neq 0 \qquad \forall s\in (0,\infty].  
\ee
By the Mean Value Theorem, we see that
\be\label{eqn-f'>0}
f'(s)>0 \qquad \forall s\in (0,\infty].  
\ee
Thus $A(s)$ is increasing and we can uniquely define our
rotationally symmetric constant mean curvature spheres
\be
\Sigma_{\alpha_0}=\Sigma'_{s_0} \textrm{ such that } 
\vol_{m-1}(\Sigma'_{s_0})=\alpha_0.
\ee
Observe that intrinsically these are round spheres of
diameter:
\be
\diam(\Sigma_{\alpha_0})= \pi f (s_0).
\ee

At a point $p\in \partial B_{p_0}(s)$ the scalar curvature is
\be \label{eqn-18}
\mathrm{R} = \frac{m-1}{f^2(s)}\left((m-2)[1- (f'(s))^2] - 2f(s)f''(s)\right) >0.
\ee

Recall the definition of the Hawking mass of a surface, $\Sigma$
in three dimensional manifold:
\be
 \mathrm{m}_{\mathrm{H}}(\Sigma) = \frac{1}{2}\left(\frac{A}{\omega_{2}}\right)\left(1-\frac{1}{4\pi}\int_\Sigma \left(\frac{H}{2}\right)^{2}\right).
 \ee
We define a natural Hawking mass function, $\mathrm{m}_{\mathrm{H}}(s)$, for $M^m\in \RS$
such that in dimension three $\mathrm{m}_{\mathrm{H}}(s)=\mathrm{m}_{\mathrm{H}}(\Sigma_s)$:
\be \label{eqn-hawking-1}
\mathrm{m}_{\mathrm{H}}(s) = \frac{f^{m-2}(s)}{2}(1-(f'(s))^2). 
\ee
Applying (\ref{eqn-18}), we see that
\be \label{eqn-hawking-2}
\mathrm{m}_{\mathrm{H}}'(s) = \frac{f^{m-1}(s)f'(s)}{2(m-1)} \mathrm{R}
\ee
Since we are studying manifolds with $f'(s)>0$ for $s\in (0,\infty)$
and $R\ge 0$, we have the monotonicity of the Hawking mass:
\be\label{eqn-hawking-increases}
\mathrm{m}_{\mathrm{H}}'(s) \ge 0. 
\ee
Observe that when
$\partial M \neq \emptyset$, 
\be \label{eqn-rmin-1}
\mathrm{m}_{\mathrm{H}}(0)=r_{min}^{m-2}/2.
\ee 
This also holds true when $\partial M =\emptyset$, since $\mathrm{m}_{\mathrm{H}}(0)=0$.

We define the
ADM mass of $M^m$ is defined as the limit of the Hawking masses:
\be
\mathrm{m}_{\mathrm{ADM}}(M^m) =\lim_{s\to \infty} \mathrm{m}_{\mathrm{H}}(s) \in [0,\infty].
\ee
For rotationally symmetric manifolds, this agrees with the definition
of the ADM mass in arbitrary dimensions.

Theorem~\ref{thm-main} concerns manifolds whose ADM mass is
finite and close to $0$
which leads to almost equality in the following 
well known inequality:
\be \label{eqn-Hawking-inequality}
0\le\mathrm{m}_{\mathrm{H}}(0) \le \mathrm{m}_{\mathrm{H}}(s) \le \mathrm{m}_{\mathrm{ADM}}.
\ee
In the next few sections we will see how this constrains
isometric embeddings of the manifolds into Euclidean space
allowing us later to estimate the flat distance between these
spaces and their limits.

\subsection{Riemannian Embedding into $\E^{m+1}$}
\label{ss-Riemannian}

In this section we describe the Riemannian isometric embedding from
our manifold $M^m$ into $\E^{m+1}$ and basic consequences.  Recall
that a Riemannian isometric embedding is a diffeomorphism 
\be\label{eqn-riem-isom-embed}
\psi:M^m\to N^n \textrm{ such that } |\psi_*V|=|V| \,\,\, \forall V\in TM_p.
\ee 
This is not an isometric embedding in the metric sense (see (\ref{eqn-isom-embed})).

\begin{lem}\label{lem-graph}
Given $M^m \in \RS_m$, we can find a rotationally
symmetric Riemannian isometric embedding of $M^m$
into Euclidean space
as the graph of some radial function $z=z(r)$ satisfying $z'(r)\geq 0$.  In graphical coordinates, we have 
\be \label{eqn-graph-g}
g=(1+[z'(r)]^2)dr^2 + r^2 g_0,
\ee
with $r\ge r_{min}$
 and the following formulae for
scalar curvature, 
area, mean curvature, Hawking mass and its derivative
in terms of the radial coordinate $r$:
\begin{alignat}{1}
\mathrm{R} (r) 
&= \frac{m-1}{1+(z')^2}\left(\frac{z'}{r}\right)\left((m-2) \frac{z'}{r}+ \frac{2z''}{1+(z')^2}\right)\\
A (r)& =\omega_{m-1} r^{m-1}\\
H (r)& = \frac{m-1}{r\sqrt{1+(z')^2}}\\
\mathrm{m}_{\mathrm{H}} (r)& = \frac{r^{m-2}}{2}\left(\frac{(z')^2}{1+(z')^2}\right) \\
\mathrm{m}_{\mathrm{H}}' (r)& = \frac{r^{m-1}}{2(m-1)} \mathrm{R}
\end{alignat}
This Riemannian isometric embedding is unique up to a choice of $z_{min}=z(r_{min})$.
\end{lem}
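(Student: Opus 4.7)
The plan is to reparameterize by the area-radius $r = f(s)$ and solve for $z(r)$ so that the graph $z = z(|x|)$ in $\E^{m+1} = \R^m \times \R$ inherits the metric $g$. Since $f'(s) > 0$ on $(0, \infty)$ by (\ref{eqn-f'>0}), the map $s \mapsto r = f(s)$ is a diffeomorphism onto $[r_{\min}, \infty)$, and transforms $g = ds^2 + f(s)^2 g_0$ into $(f'(s))^{-2} dr^2 + r^2 g_0$. A rotationally symmetric graph $z = z(r)$ in Euclidean space carries the induced metric $(1 + (z'(r))^2) dr^2 + r^2 g_0$, so matching coefficients forces $(z'(r))^2 = (f'(s))^{-2} - 1$, where $s = s(r)$ is the inverse of $r = f(s)$.

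The crux is to verify that this right-hand side is nonnegative, i.e.\ that $(f'(s))^2 \leq 1$ for all $s$. This is precisely the content of nonnegativity of the Hawking mass: by (\ref{eqn-hawking-1}), $\mathrm{m}_{\mathrm{H}}(s) \geq 0$ is equivalent to $(f'(s))^2 \leq 1$. By (\ref{eqn-rmin-1}), $\mathrm{m}_{\mathrm{H}}(0) = r_{\min}^{m-2}/2 \geq 0$ in both boundary cases, and $\mathrm{m}_{\mathrm{H}}$ is nondecreasing by (\ref{eqn-hawking-increases}) since $\mathrm{R} \geq 0$ and $f' > 0$; hence $\mathrm{m}_{\mathrm{H}}(s) \geq 0$ throughout. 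Taking the positive square root (to enforce $z'(r) \geq 0$) and prescribing $z(r_{\min}) = z_{\min}$ then determines $z$ uniquely and completes the construction of the embedding.

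The formulae for area, mean curvature, and Hawking mass in the $r$ variable follow by direct substitution of $r = f(s)$ and $f'(s) = 1/\sqrt{1 + (z'(r))^2}$ into (\ref{A-s}), (\ref{H-s}), and (\ref{eqn-hawking-1}), using $1 - (f')^2 = (z')^2/(1 + (z')^2)$ for the last. For the scalar curvature, I would differentiate $f'(s) = (1 + (z'(f(s)))^2)^{-1/2}$ with respect to $s$ to obtain $f''(s) = -z'(r) z''(r)/(1 + (z'(r))^2)^2$, then substitute into (\ref{eqn-18}) and collect terms to recover the displayed expression. The derivative formula for $\mathrm{m}_{\mathrm{H}}(r)$ follows from (\ref{eqn-hawking-2}) via the chain rule $d/dr = (f'(s))^{-1} d/ds$.

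The main obstacle is the behavior at the inner endpoint $r = r_{\min}$. When $\partial M \neq \emptyset$, the minimal boundary condition gives $f'(0) = 0$, so $z'(r_{\min}) = \infty$ and the embedded graph has a vertical tangent along the boundary sphere; one must verify that the map nevertheless extends continuously to $\partial M$ and defines a legitimate Riemannian isometric embedding of all of $M^m$ (smooth in the interior). When $\partial M = \emptyset$, smoothness of $g$ at the pole requires $f(0) = 0$ and $f'(0) = 1$, giving $z'(0) = 0$, so the graph closes up smoothly at the origin without difficulty.
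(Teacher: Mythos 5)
Your proposal takes essentially the same route as the paper: reparameterize by $r = f(s)$, match the metric coefficient to solve $(z'(r))^2 = (f'(s))^{-2} - 1$, invoke positivity of the Hawking mass (which you correctly deduce from $\mathrm{m}_{\mathrm{H}}(0) = r_{\min}^{m-2}/2 \geq 0$ together with monotonicity) and absence of interior minimal surfaces to get $f'(s) \in (0,1]$, and then derive the remaining formulae by substitution. The boundary concern you flag at the end does not in fact cause trouble: although $z'(r) \to \infty$ as $r \to r_{\min}$, the compensating relation $z'(f(s))\,f'(s) = \sqrt{1 - (f'(s))^2} \to 1$ as $s \to 0$ shows the map $s \mapsto (f(s)\theta, z(f(s)))$ has a bounded, rank-$m$ differential up to the boundary, so the graph is a genuine Riemannian isometric embedding of all of $M^m$ with a vertical tangent along $\partial M$.
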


\begin{proof}
First observe that by positivity of the Hawking mass, (\ref{eqn-hawking-1}) and
the lack of interior minimal surfaces (\ref{eqn-f'>0}), we have $f'(s)\in (0,1)$.   
Set $r(s)=f(s)$ and observe that since $s$ is a distance function, 
\be
s'(r)=\sqrt{1+(z'(r))^2}
\ee
which is solvable because $s'(r)\ge 1$.   We choose $z'(r)\ge 0$
which then determines $z(r)$ up to a constant $z_{min}$.
The rest of the equations then follow from the corresponding equations in $s$.
\end{proof}

It is now easy to see the rotational symmetric case of the Positive Mass Theorem
and Penrose Inequality which we restate here as the proof is important to the
almost equality case:

\begin{thm}\label{thm-monotonicity}
Given $M^m \in \RS_m$ isometrically
embedded into Euclidean space as above,
we have
\be
\mathrm{m}_{\mathrm{H}}(r_{min})\le \mathrm{m}_{\mathrm{H}}(r)\le \mathrm{m}_{\mathrm{ADM}}
\ee
and if there is an equality then $M^m$ is Euclidean space (when $\mathrm{m}_{\mathrm{ADM}}=0$) or a Riemannian Schwarzschild manifold of mass $\mathrm{m}_{\mathrm{ADM}}>0$,
\be \label{eqn-def-Sch}
g=\left(1+\frac{2 \mathrm{m}_{\mathrm{ADM}}}{ r^{m-2}-2\mathrm{m}_{\mathrm{ADM}}}\right)dr^2 + r^2 g_0.
\ee
\end{thm}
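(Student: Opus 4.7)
The plan is to derive the theorem as a direct consequence of the graphical formulas in Lemma~\ref{lem-graph}, together with the definition of $\mathrm{m}_{\mathrm{ADM}}$ as a limit and a routine ODE argument in the rigidity case.

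First I would establish the inequality. By Lemma~\ref{lem-graph},
\[
\mathrm{m}_{\mathrm{H}}'(r) \;=\; \frac{r^{m-1}}{2(m-1)}\,\mathrm{R}(r),
\]
and since $M^m \in \RS_m$ has $\mathrm{R}\ge 0$ and $r\ge r_{min}\ge 0$, the function $\mathrm{m}_{\mathrm{H}}$ is nondecreasing in $r$. Because rotational symmetry pairs the variables $r$ and $s$ monotonically via $r(s)=f(s)$ with $f'(s)>0$ on $(0,\infty)$, the definition $\mathrm{m}_{\mathrm{ADM}}(M^m)=\lim_{s\to\infty}\mathrm{m}_{\mathrm{H}}(s)$ translates to $\mathrm{m}_{\mathrm{ADM}} = \lim_{r\to\infty}\mathrm{m}_{\mathrm{H}}(r)$, yielding $\mathrm{m}_{\mathrm{H}}(r_{min}) \le \mathrm{m}_{\mathrm{H}}(r) \le \mathrm{m}_{\mathrm{ADM}}$ for all $r\ge r_{min}$.

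For the rigidity statement, suppose $\mathrm{m}_{\mathrm{H}}(r_{min})=\mathrm{m}_{\mathrm{ADM}}$ (the other equality cases reduce to this one by monotonicity together with taking the sup/limit). Then $\mathrm{m}_{\mathrm{H}}$ is constant on $[r_{min},\infty)$, so $\mathrm{m}_{\mathrm{H}}'(r)\equiv 0$, and since $r>0$ this forces $\mathrm{R}\equiv 0$ on $M^m$. Using the Hawking mass formula from Lemma~\ref{lem-graph},
\[
\frac{r^{m-2}}{2}\,\frac{(z'(r))^2}{1+(z'(r))^2} \;=\; \mathrm{m}_{\mathrm{ADM}} \qquad \text{for all } r\ge r_{min}.
\]
If $\mathrm{m}_{\mathrm{ADM}}=0$, this yields $z'\equiv 0$, so $g = dr^2 + r^2 g_0$ on the range of $r$; combined with the lack of boundary (which, together with $\mathrm{m}_{\mathrm{H}}(r_{min})=0$ and equation~\eqref{eqn-rmin-1}, forces $r_{min}=0$), the manifold is Euclidean space.

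If $\mathrm{m}_{\mathrm{ADM}}>0$, solving the algebraic relation gives
\[
(z'(r))^2 \;=\; \frac{2\mathrm{m}_{\mathrm{ADM}}}{r^{m-2}-2\mathrm{m}_{\mathrm{ADM}}},
\qquad
1+(z'(r))^2 \;=\; 1+\frac{2\mathrm{m}_{\mathrm{ADM}}}{r^{m-2}-2\mathrm{m}_{\mathrm{ADM}}},
\]
which plugged into $g=(1+(z')^2)dr^2+r^2 g_0$ is exactly the Schwarzschild metric~\eqref{eqn-def-Sch}. The boundary condition $f'(0)=0$ for the stable minimal horizon forces $z'(r_{min})=\infty$, which matches $r_{min}^{m-2}=2\mathrm{m}_{\mathrm{ADM}}$, consistent with~\eqref{eqn-rmin-1}.

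I do not expect a serious obstacle: the monotonicity is immediate from the sign of $\mathrm{m}_{\mathrm{H}}'$, and the rigidity case reduces to integrating a first-order ODE whose solution is, by inspection, Schwarzschild. The only minor care needed is handling the non-boundary case ($r_{min}=0$) so that $\mathrm{m}_{\mathrm{H}}(r_{min})$ is well-defined as the limit $\lim_{r\to 0^+}\mathrm{m}_{\mathrm{H}}(r)=0$, and verifying that the equality at any interior point propagates to all of $[r_{min},\infty)$ via monotonicity to conclude global rigidity rather than only local.
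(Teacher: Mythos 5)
Your argument is correct and follows essentially the same route as the paper: monotonicity from the sign of $\mathrm{m}_{\mathrm{H}}'$, and in the equality case plugging the constant Hawking mass into the graphical formula from Lemma~\ref{lem-graph}, solving algebraically for $(z')^2$, and reading off the flat or Schwarzschild metric. The only cosmetic difference is that you note $\mathrm{R}\equiv 0$ as an intermediate observation (which you then do not actually need) and you derive $r_{min}=0$ in the massless case from~\eqref{eqn-rmin-1} rather than by the paper's geometric observation that $\partial B_0(r_{min})$ cannot be minimal in a hyperplane.
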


\begin{proof}
The monotonicity of the Hawking mass follows from 
(\ref{eqn-hawking-increases}).   When there is an equality
we apply Lemma~\ref{lem-graph} to see that
\be
\mathrm{m}_{\mathrm{ADM}} = \frac{r^{m-2}}{2}\left(\frac{(z')^2}{1+(z')^2}\right). 
\ee
So
\be
\left(1+(z')^2\right)2 \mathrm{m}_{\mathrm{ADM}}= r^{m-2} (z')^2
\ee
and
\be
2\mathrm{m}_{\mathrm{ADM}}+  \left( 2 \mathrm{m}_{\mathrm{ADM}} - r^{m-2}\right) (z')^2=0.
\ee
So
\be \label{eqn-Sch}
(z')^2=\frac{-2 \mathrm{m}_{\mathrm{ADM}}}{( 2 \mathrm{m}_{\mathrm{ADM}} - r^{m-2})}.
\ee
When $\mathrm{m}_{\mathrm{ADM}}=0$, $z'(r)=0$ and $z=z_{min}$ is the Euclidean hyperplane.

Observe that $r_{min}$ must then be $0$ because $r_{min}>0$
forces the existence of a minimal surface at the boundary, and
$\partial B_0(r_{min})$ is not minimal in a hyperplane.
\end{proof}

The following lemma will be useful when examining the
deep apparent horizons depicted in Figure~\ref{fig-thm-pted}
that may occur in sequences satisfying the hypothesis of
Theorem~\ref{thm-main}:

\begin{lem}\label{lem-radial}
When $r_{min}>0$,
we can replace the radial coordinate, $r$ by the height coordinate, $z$, so that 
\be
g=(1+[r'(z)]^2) dz^2 + r(z)^2 g_0.
\ee  
Then for $r\ge r_{disk}$ we have the following formulae for scalar curvature,
area, mean curvature, Hawking mass and slope of the Hawking mass
of a level in terms of the height coordinate $z$:
\begin{equation*}
\mathrm{R}(z) = \frac{m-1}{r(1+(r')^2)}\left( \frac{m-2}{r}- \frac{2r''}{1+(r')^2}\right)
=\frac{(m-1)((m-2)(1+(r')^2) -\, 2rr'')}{r^2(1+(r')^2)^2}
\end{equation*}
\begin{alignat}{1}
\mathrm{A}(z) & =\omega_{m-1} r^{m-1}\\
\mathrm{H}(z) & = \frac{(m-1)r'}{r\sqrt{1+(r')^2}}\\
\mathrm{m}_{\mathrm{H}}(z) & = \frac{r^{m-2}}{2(1+(r')^2)}\\
\mathrm{m}_{\mathrm{H}}'(z) & = \frac{r^{m-1} r'}{2(m-1)} \mathrm{R}.
\end{alignat}
When $r_{min}=0$, these formulas hold
outside of a possibly Euclidean disk, $r^{-1}[r_{min}, r_{disk}]$
where $r_{disk} \in [r_{min}, \infty]$,
When $r_{disk}=\infty$ we have Euclidean space.
\end{lem}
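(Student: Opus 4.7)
The plan is to begin from the graph representation of Lemma~\ref{lem-graph}, where $g = (1+(z'(r))^2)\,dr^2 + r^2\,g_0$ with $z'(r) \geq 0$, and invert the relation $z = z(r)$ on the region where $z'(r) > 0$.

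First, I would justify the coordinate change under the hypothesis $r_{min} > 0$. Because $\partial M$ is a stable minimal hypersurface, $f'(0) = 0$; combined with the identity $s'(r) = \sqrt{1+(z'(r))^2}$ from the proof of Lemma~\ref{lem-graph}, which gives $f'(s) = 1/\sqrt{1+(z'(r))^2}$, we see that $z'(r) \to \infty$ as $r \to r_{min}^+$. Furthermore, since $\mathrm{m}_{\mathrm{H}}(0) = r_{min}^{m-2}/2 > 0$ and the Hawking mass is nondecreasing by (\ref{eqn-hawking-increases}), formula (\ref{eqn-hawking-1}) yields $(f'(s))^2 < 1$ on $(0,\infty)$, equivalently $z'(r) > 0$ on $(r_{min},\infty)$. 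So $z(r)$ is strictly increasing with a smooth inverse $r = r(z)$ satisfying $r'(z) = 1/z'(r)$; in particular $r'(z_{min}) = 0$ at the horizon and $r'(z) > 0$ above it.

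Second, substituting $dr = r'(z)\,dz$ and $z'(r) = 1/r'(z)$ into the graph metric,
\[
(1 + (z'(r))^2)\,dr^2 \;=\; \left(1 + \tfrac{1}{(r'(z))^2}\right)(r'(z))^2\,dz^2 \;=\; (1 + (r'(z))^2)\,dz^2,
\]
yields $g = (1 + (r'(z))^2)\,dz^2 + r(z)^2\,g_0$, as claimed. The remaining geometric formulas then follow by direct substitution using the identities
\[
\sqrt{1 + (z'(r))^2} \;=\; \frac{\sqrt{1 + (r'(z))^2}}{r'(z)}, \qquad \frac{(z'(r))^2}{1 + (z'(r))^2} \;=\; \frac{1}{1 + (r'(z))^2},
\]
together with $z''(r) = -r''(z)/(r'(z))^3$ from the inverse function theorem (needed for the scalar curvature) and the chain rule $d/dz = r'(z)\,d/dr$ (needed for $\mathrm{m}_{\mathrm{H}}'(z)$). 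The area formula $\mathrm{A} = \omega_{m-1} r^{m-1}$ is unchanged since it depends only on $r$.

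Finally, when $r_{min} = 0$, smoothness at the pole forces $f'(0) = 1$ and hence $z'(0) = 0$. There may be an initial interval $[0, r_{disk}]$ on which $z' \equiv 0$, corresponding to a flat Euclidean disk; beyond it, $z'(r) > 0$ and the same inversion applies on $(r_{disk}, \infty)$. If $r_{disk} = \infty$ the entire graph is a hyperplane and $M^m$ is Euclidean. The proof presents no deep obstacle: the only technical point is tracking the boundary behavior of the coordinate change at the horizon when $r_{min} > 0$ and at the boundary of the Euclidean core when $r_{min} = 0$, both of which are controlled by the stable-minimal-surface and pole-smoothness conditions, respectively.
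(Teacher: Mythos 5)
Your proposal follows essentially the same route as the paper's: represent $M^m$ as a graph $z = z(r)$ via Lemma~\ref{lem-graph}, argue that $z' > 0$ on the relevant region so the relation can be inverted, and then push the formulas through using $z' = 1/r'$, $z'' = -r''/(r')^3$, $1+(z')^2 = (1+(r')^2)/(r')^2$, and the chain rule. Your treatment of the $r_{min} > 0$ case (boundary slope infinite, strictly positive Hawking mass forcing $z' > 0$) is correct and in fact more detailed than the paper, which simply asserts the formulas hold beyond $r_{disk}$.

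There is, however, a genuine gap in the $r_{min} = 0$ case. You assert ``there may be an initial interval $[0,r_{disk}]$ on which $z' \equiv 0$'' and that $z' > 0$ beyond it, but you do not justify that the zero set of $z'$ is connected. A priori one could have $z'(0)=0$, then $z'>0$ on some subinterval, then $z'=0$ again, so that inverting $z(r)$ globally would fail. The paper closes this with the Hawking mass monotonicity that you already use in the other case: since
\[
\mathrm{m}_{\mathrm{H}}(r) \;=\; \frac{r^{m-2}}{2}\,\frac{(z')^2}{1+(z')^2},
\]
$z'(r_0) = 0$ forces $\mathrm{m}_{\mathrm{H}}(r_0) = 0$; monotonicity and nonnegativity then give $\mathrm{m}_{\mathrm{H}} \equiv 0$ on $[r_{min}, r_0]$, hence $z' \equiv 0$ there. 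With $r_{disk} := \sup\{r : z'(r) = 0\}$ this shows $z'\equiv 0$ exactly on $[r_{min},r_{disk}]$, so $r^{-1}[r_{min},r_{disk}]$ is a flat Euclidean disk and the inversion is valid for $r > r_{disk}$. (The same computation also handles the $r_{min}>0$ case uniformly: a zero of $z'$ would force $\mathrm{m}_{\mathrm{H}}(r_{min})=0$ and hence $r_{min}=0$, a contradiction.) Inserting this one-line Hawking-mass argument completes your proof.
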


\begin{proof}
By Lemma~\ref{lem-graph}, $z'(r)\ge 0$.  Let 
\be
r_{disk}=\sup\{r: \, z'(r) =0\}\in [r_{min},\infty].
\ee
Then all the equations hold for $r>r_{disk}$.

By Lemma~\ref{lem-graph} we have $\mathrm{m}_{\mathrm{H}}(r_{disk})=0$, so
by the Positive Mass Theorem, $\mathrm{m}_{\mathrm{H}}(r_{min})=0$,
so $r_{min}=0$ and $r^{-1}[0,r_{disk})$ is a ball.  It is clearly
a Euclidean disk by (\ref{eqn-graph-g}).  
\end{proof}

\subsection{Bounding $\diam(\partial M)$ and $F'$}
\label{ss-bounding}

In this subsection we use the ADM mass to provide Lipschitz control on $z=F(r)$ on annular regions
[Lemma~\ref{lem-F'}] and to bound the diameter of the
boundary of the manifold [Lemma~\ref{lem-rmin}].
These lemmas will be applied later to prove our stability theorems 
[Theorem~\ref{thm-main}].

\begin{lem} \label{lem-rmin}
If $M^m\in \RS$ then
\be
r_{min}\le \left(2\mathrm{m}_{\mathrm{ADM}}\right)^{1/(m-2)}.
\ee
So $\diam(\partial M^m) \le \pi \left(2\mathrm{m}_{\mathrm{ADM}}\right)^{1/(m-2)}.$
\end{lem}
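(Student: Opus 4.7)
The plan is to derive both inequalities directly from the Hawking mass machinery already developed in Subsection~\ref{ss-setting}, with essentially no extra work.

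First I would handle the case $\partial M \neq \emptyset$. By (\ref{eqn-rmin-1}), the Hawking mass at the boundary satisfies
\[
\mathrm{m}_{\mathrm{H}}(0) \;=\; \frac{r_{min}^{\,m-2}}{2}.
\]
By the monotonicity of Hawking mass recorded in (\ref{eqn-Hawking-inequality}) (equivalently Theorem~\ref{thm-monotonicity}), we have $\mathrm{m}_{\mathrm{H}}(0) \le \mathrm{m}_{\mathrm{ADM}}$, whence $r_{min}^{\,m-2} \le 2\mathrm{m}_{\mathrm{ADM}}$ and the claimed bound on $r_{min}$ follows by taking $(m-2)$-th roots (legal since $r_{min}\ge 0$ and $\mathrm{m}_{\mathrm{ADM}}\ge 0$ by the Positive Mass Theorem). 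If $\partial M = \emptyset$, then by the conventions of Subsection~\ref{ss-setting} we have $r_{min} = f(0) = 0$ and the inequality is trivial.

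For the diameter statement, I would simply identify $\partial M$ with the $s=0$ slice, on which the induced metric is $f(0)^2 g_0 = r_{min}^2 g_0$, i.e.\ a round $(m-1)$-sphere of radius $r_{min}$. Its intrinsic diameter is therefore $\pi r_{min}$, and combining with the bound on $r_{min}$ just established yields
\[
\diam(\partial M^m) \;=\; \pi r_{min} \;\le\; \pi \bigl(2\mathrm{m}_{\mathrm{ADM}}\bigr)^{1/(m-2)}.
\]

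There is really no obstacle here: this lemma is essentially a direct bookkeeping consequence of the Penrose-type inequality $\mathrm{m}_{\mathrm{H}}(0) \le \mathrm{m}_{\mathrm{ADM}}$, which in the rotationally symmetric setting is just the monotonicity of $\mathrm{m}_{\mathrm{H}}$ proved via (\ref{eqn-hawking-increases}). The only mild subtlety is remembering to treat the boundaryless case separately so that (\ref{eqn-rmin-1}) is invoked only where it applies.
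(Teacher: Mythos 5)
Your proof is correct and takes essentially the same approach as the paper: the paper recomputes $\mathrm{m}_{\mathrm{H}}$ at the boundary via the $z$-coordinate formulas in Lemma~\ref{lem-radial} (using minimality to get $r'(z_{min})=0$), whereas you invoke the already-established identity (\ref{eqn-rmin-1}) directly, but both reduce to the same fact that $\mathrm{m}_{\mathrm{H}}(0)=r_{min}^{m-2}/2$ followed by the monotonicity (\ref{eqn-hawking-increases}). Your explicit treatment of the $\partial M=\emptyset$ case and the identification of $\partial M$ as a round $(m-1)$-sphere of radius $r_{min}$ are both correct and match the paper's intent.
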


\begin{proof}
Assuming $r_{min}>0$, we know by Lemma~\ref{lem-radial}
with $z_{min}=z(r_{min})$ that
\be
0 = \frac{(m-1)r'(z_{min})}{r_{min}\sqrt{1+(r'(z_{min}))^2}}\\
\ee
because the boundary is a minimal surface.  So
$r'(z_{min})=0$ and the Hawking mass is
\be
\mathrm{m}_{\mathrm{H}}(z_{min})  = \frac{r_{min}^{m-2}}{2(1+0^2)}.
\ee
The lemma then follows from the monotonity of Hawking mass
in (\ref{eqn-hawking-increases}).
\end{proof}

\begin{lem}\label{lem-F'}
Using the graphical coordinates of Lemma~\ref{lem-graph}, with $z=F(r)$, we
have
\be
F'(r) \ge \sqrt{\frac{2m_1}{r^{m-2}-2m_1 }\,} \qquad \forall r \in [r_1, \infty)
\ee
for any $r_1\ge r_{min}$ where $m_1=\mathrm{m}_{\mathrm{H}}(r_1)$ and
\be
F'(r) \le \sqrt{\frac{2\mathrm{m}_{\mathrm{ADM}}}{r^{m-2}-2\mathrm{m}_{\mathrm{ADM}}}\, } 
\qquad \forall r \ge \max \left\{ r_1, \left(2\mathrm{m}_{\mathrm{ADM}} 
\right)^{1/(m-2)}\right\}
\ee
where $\mathrm{m}_{\mathrm{ADM}}=\mathrm{m}_{\mathrm{ADM}}(M^m)$.
\end{lem}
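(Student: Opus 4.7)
The plan is to first invert the Hawking mass formula from Lemma~\ref{lem-graph} to express $F'(r)$ algebraically in terms of $\mathrm{m}_{\mathrm{H}}(r)$, and then apply the monotonicity of the Hawking mass (equation~(\ref{eqn-hawking-increases}) / Theorem~\ref{thm-monotonicity}) at both endpoints $r_1$ and $\infty$ to obtain the desired lower and upper estimates.

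Concretely, starting from $\mathrm{m}_{\mathrm{H}}(r) = \tfrac{r^{m-2}}{2}\bigl((z')^2/(1+(z')^2)\bigr)$ in Lemma~\ref{lem-graph} and using $F'(r) = z'(r) \ge 0$, a direct algebraic rearrangement gives
\begin{equation*}
F'(r)^2 \;=\; \frac{2\,\mathrm{m}_{\mathrm{H}}(r)}{r^{m-2} - 2\,\mathrm{m}_{\mathrm{H}}(r)},
\end{equation*}
valid wherever $r^{m-2} > 2\,\mathrm{m}_{\mathrm{H}}(r)$; this positivity of the denominator is automatic because the original formula forces $(z')^2/(1+(z')^2) < 1$, i.e.\ $2\,\mathrm{m}_{\mathrm{H}}(r) < r^{m-2}$.

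Next I observe that on $[0, r^{m-2}/2)$ the map $x \mapsto 2x/(r^{m-2} - 2x)$ is strictly increasing in $x$. Therefore the two inequalities in the lemma reduce entirely to bounding $\mathrm{m}_{\mathrm{H}}(r)$ below and above. For the lower bound I use the monotonicity $\mathrm{m}_{\mathrm{H}}(r) \ge \mathrm{m}_{\mathrm{H}}(r_1) = m_1$ for $r \ge r_1$ (equation~(\ref{eqn-hawking-increases})), noting that $r^{m-2} > 2\,\mathrm{m}_{\mathrm{H}}(r) \ge 2 m_1$ so the denominator $r^{m-2} - 2m_1$ is positive. For the upper bound I use $\mathrm{m}_{\mathrm{H}}(r) \le \mathrm{m}_{\mathrm{ADM}}$ (the other half of~(\ref{eqn-Hawking-inequality})); here positivity of $r^{m-2} - 2\mathrm{m}_{\mathrm{ADM}}$ is precisely what the hypothesis $r \ge (2\mathrm{m}_{\mathrm{ADM}})^{1/(m-2)}$ guarantees (with the boundary case making the right-hand side infinite, so the inequality is trivially true there).

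There is no substantive obstacle here: the work is essentially algebraic inversion plus invocation of results already proved in Section~\ref{Sect-Positive}. The only care needed is to verify that the denominators are strictly positive in the relevant ranges of $r$, which, as noted above, is forced by the very definition of the Hawking mass (for the lower bound) and by the explicit hypothesis on $r$ (for the upper bound).
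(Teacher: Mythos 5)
Your proof is correct and follows essentially the same route as the paper's: both use the Hawking mass formula from Lemma~\ref{lem-graph} together with the monotonicity $\mathrm{m}_{\mathrm{H}}(r_1)\le\mathrm{m}_{\mathrm{H}}(r)\le\mathrm{m}_{\mathrm{ADM}}$, then rearrange algebraically. Your explicit inversion to $F'(r)^2 = 2\mathrm{m}_{\mathrm{H}}(r)/(r^{m-2}-2\mathrm{m}_{\mathrm{H}}(r))$ and the observation that $x\mapsto 2x/(r^{m-2}-2x)$ is increasing is a cleaner packaging of the same chain of inequalities, and your care about denominator positivity is a sensible addition.
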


\begin{proof}
By the formulas in Lemma~\ref{lem-graph} and the monotonicity of Hawking mass
in (\ref{eqn-hawking-increases})
we have the following for $r>r_1$:
\begin{alignat}{2}
 \mathrm{m}_{\mathrm{H}}(r_1)&\leq \mathrm{m}_{\mathrm{H}}(r) \leq  \mathrm{m}_{\mathrm{ADM}}\\
  \mathrm{m}_{\mathrm{H}}(r_1)&\le  \frac{r^{m-2}}{2}\left(\frac{(z')^2}{1+(z')^2} \right) \leq  \mathrm{m}_{\mathrm{ADM}}\\
 2\mathrm{m}_{\mathrm{H}}(r_1) (1+(z')^2)&\leq r^{m-2}(z')^2 \leq 2m_{\mathrm{ADM}}(1+(z')^2 )
\end{alignat}
So we get
\begin{alignat}{1}
 2\mathrm{m}_{\mathrm{H}}(r_1)&\leq (r^{m-2}-2\mathrm{m}_{\mathrm{H}}(r_1))(z')^2\\
 2\mathrm{m}_{\mathrm{ADM}}&\geq (r^{m-2}-2\mathrm{m}_{\mathrm{ADM}})(z')^2
 \end{alignat}
The first equation tells us that 
\be
z' \geq \sqrt{\frac{2m_1}{r^{m-2}-2m_1}} \qquad \forall r>r_1.
\ee
The second implies that 
\be
z' \leq \sqrt{\frac{2\mathrm{m}_{\mathrm{ADM}}}{r^{m-2}-2\mathrm{m}_{\mathrm{ADM}}}} \qquad \forall r\ge (2\mathrm{m}_{\mathrm{ADM}})^{m-2}.
\ee
\end{proof}


\subsection{Rotationally Symmetric Examples}
\label{ss-ex-1}

Here we review the key examples depicted in Figure~\ref{fig-thm-pted}
which inspired the use of the Intrinsic Flat Distance to estimate the
stability of the Positive Mass Theorem.   These are all well known
examples but we present them for completeness of exposition.

Recall (\ref{eqn-hawking-increases}) implies that, 
in the rotationally symmetric setting, monotonicity of the Hawking mass 
on the symmetric spheres is \emph{equivalent} to nonnegativity of scalar curvature.  Therefore, we have the following lemma which will be useful
for constructing examples:

\begin{lem}\label{lem-ex}
There is a bijection between elements of
$\RS_m$ and increasing functions
$\mathrm{m}_{\mathrm{H}}:[\rmin,\infty)\to\R$ such that 
\be \label{lem-ex-1}
\mathrm{m}_{\mathrm{H}}(\rmin)=\frac{1}{2}\rmin^{m-2}
\ee 
and
\be
\mathrm{m}_{\mathrm{H}}(r)<\frac{1}{2}r^{m-2}
\ee 
for $r>\rmin\ge 0$.  In this section we will call these functions
\emph{admissible Hawking mass functions}.  
\end{lem}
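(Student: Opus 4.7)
The plan is to verify the two directions of the claimed bijection using the explicit formulas furnished by Lemma~\ref{lem-graph} and Lemma~\ref{lem-radial}, with the absence/presence of boundary handled uniformly by the $\rmin=0$ vs.\ $\rmin>0$ dichotomy.

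\emph{Forward direction.} Given $M^m\in\RS_m$, I would apply Lemma~\ref{lem-graph} to realize $M^m$ as a graph $z=F(r)$ over $r\in[\rmin,\infty)$ with
\be
\mathrm{m}_{\mathrm{H}}(r)\,=\,\frac{r^{m-2}}{2}\cdot\frac{(F'(r))^2}{1+(F'(r))^2}.
\ee
Monotonicity of $\mathrm{m}_{\mathrm{H}}$ is exactly (\ref{eqn-hawking-increases}). The absence of interior minimal surfaces, combined with the formula for $H$ in Lemma~\ref{lem-graph}, is equivalent to $F'(r)<\infty$ for $r>\rmin$, which immediately yields the strict bound $\mathrm{m}_{\mathrm{H}}(r)<\tfrac{1}{2}r^{m-2}$. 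When $\rmin=0$ the boundary condition reduces to $\mathrm{m}_{\mathrm{H}}(0)=0$, which is automatic. When $\rmin>0$, the stable minimality of $\partial M$ together with the $z$-parametrization of Lemma~\ref{lem-radial} gives $r'(z_{min})=0$, i.e.\ $F'(r)\to\infty$ as $r\to\rmin^+$, so taking the limit in the displayed formula produces $\mathrm{m}_{\mathrm{H}}(\rmin)=\tfrac{1}{2}\rmin^{m-2}$.

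\emph{Reverse direction.} Given an admissible $\mathrm{m}_{\mathrm{H}}$, I would solve the above quadratic for $(F')^2$ and define the manifold as the graph of
\be
F(r)\,:=\,z_{min}+\int_{\rmin}^{r}\sqrt{\frac{2\mathrm{m}_{\mathrm{H}}(t)}{t^{m-2}-2\mathrm{m}_{\mathrm{H}}(t)}}\,dt,
\ee
which is smooth and strictly increasing on $(\rmin,\infty)$ thanks to the two admissibility conditions. The resulting rotationally symmetric metric $g=(1+(F')^2)dr^2+r^2 g_0$ satisfies $\mathrm{m}_{\mathrm{H}}'(r)=\frac{r^{m-1}}{2(m-1)}\mathrm{R}$ by Lemma~\ref{lem-graph}, so the assumed monotonicity of the input forces $\mathrm{R}\ge 0$. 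The strict bound keeps $F'$ finite, ruling out interior minimal surfaces, while the equality $\mathrm{m}_{\mathrm{H}}(\rmin)=\tfrac{1}{2}\rmin^{m-2}$ forces $F'(r)\to\infty$ at $r=\rmin$ when $\rmin>0$, producing a stable minimal boundary. Direct substitution shows the two constructions are mutually inverse.

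\emph{Main obstacle.} The delicate point is the regularity at $r=\rmin$ in the case $\rmin>0$: one must show that the blowup of $F'$ is integrable, so that the graph closes off in finite height onto a sphere of radius $\rmin$, and that the induced metric extends smoothly across this apparent horizon. The cleanest route is to transport the computation to the $z$-coordinate of Lemma~\ref{lem-radial}, where the admissible data correspond to a smooth function $r(z)$ with $r'(z_{min})=0$ and $r(z)\ge\rmin$; the asymptotics $r^{m-2}-2\mathrm{m}_{\mathrm{H}}(r)=O(r-\rmin)$ near $\rmin$ then produce the expected square-root behavior of $F-z_{min}$, and smoothness of the embedded hypersurface up to its minimal boundary follows in the standard way. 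Apart from this regularity check, the lemma is a direct algebraic unwinding of the formulas developed in Section~\ref{ss-Riemannian}.
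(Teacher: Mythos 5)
Your argument matches the paper's own proof in both directions: the forward direction reads the two inequalities and the boundary condition off the graph formula of Lemma~\ref{lem-graph} together with $z'\to\infty$ at $\rmin$, and the reverse direction defines $z$ by exactly the integral (\ref{lem-ex-m-H-to-z}). You are right that the integrability of $F'$ near $\rmin$ when $\rmin>0$ is an unstated regularity point: the paper tacitly assumes $\mathrm{m}_{\mathrm{H}}$ is smooth enough that $r^{m-2}-2\mathrm{m}_{\mathrm{H}}(r)$ vanishes to first order at $\rmin$, so that $z'$ has only an integrable square-root singularity, but never says so; this is an imprecision in the paper rather than a gap in your reconstruction.
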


\begin{proof}
Given $M^m \in \RS$, apply Lemma~\ref{lem-graph}, to
determine $r_{min} \ge 0$.  
Since $M^m$ has no closed interior minimal surfaces, $z'(r)>0$
so
\be
\mathrm{m}_{\mathrm{H}}(r)=\frac{r^{m-2}}{2}\left(\frac{(z'(r))^2}{1+(z'(r))^2}\right)<\frac{1}{2}r^{m-2}.
\ee
If $r_{min}=0$, then $\mathrm{m}_{\mathrm{H}}(r_{min})=0$.  If $r_{min}>0$, we
have $\lim_{r\to r_{min}} z'(r)=\infty$, so we have (\ref{lem-ex-1}).

Given an admissible Hawking function, $\mathrm{m}_{\mathrm{H}}:[r_{min}, \infty) \to \R$,
we define $z: [0, \infty) \to \R$,
via the formula
\be \label{lem-ex-m-H-to-z}
 z(\bar{r}) = \int_{\rmin}^{\bar{r}} \sqrt{ \frac{2\mathrm{m}_{\mathrm{H}}(r)}
{r^{m-2}-2\mathrm{m}_{\mathrm{H}}(r)}}\,dr.
\ee
This determines a rotationally symmetric manifold.   Since
\be \label{lem-ex-z'}
z'(r) =  \sqrt{ \frac{2\mathrm{m}_{\mathrm{H}}(r)}{r^{m-2}-2\mathrm{m}_{\mathrm{H}}(r)}}>0 \qquad \forall r> r_{min}
\ee
we have no interior  minimal surfaces.  
If $r_{min}=0$
then $z'(r_{min})=0$ and if $r_{min}>0$ then
\be
\lim_{r\to r_{min}}z'(r)\ge\lim_{r\to r_{min}}
\sqrt{ \frac{2\mathrm{m}_{\mathrm{H}}(r)}{r^{m-2}-2\mathrm{m}_{\mathrm{H}}(r)}}=\infty
\ee
so the boundary is an outermost minimal surface. 
\end{proof}

We begin with the most
basic example depicted in column two of Figure 1: Schwarzschild manifolds
whose ADM mass converges to $0$.  

\begin{example}\label{ex-Sch-to-0}  
The Riemannian Schwarzschild space, $M^m_{\mathrm{Sch}}$ of
mass $\mathrm{m}_{\mathrm{ADM}}$ can be found by applying Lemma~\ref{lem-ex}
with $\mathrm{m}_{\mathrm{H}}(r)$ constant equal to $\mathrm{m}_{\mathrm{ADM}}$.   Its metric
satisfies (\ref{eqn-def-Sch}).  These spaces are diffeomorphic
to Euclidean space $\E^m$ with a ball of radius $r_{min}$ removed.
Fixing an area $\alpha_0>0$, we see that outside a rotationally
symmetric sphere $\Sigma_{\alpha_0}$ of area $\vol_{m-1}(\Sigma_{\alpha_0})=\alpha_0$
the metric converges smoothly to the Euclidean metric.
However, these manifolds are not diffeomorphic to Euclidean
space and we do not have smooth convergence globally.
\end{example}

Next we consider the deep gravity wells depicted in third and fourth
columns of Figure~\ref{fig-thm-pted}.  First we provide a general
lemma describing which admissable Hawking masses lead to
strongly vertical graphs $z=F(r)$:

\begin{lem} \label{lem-steep}
 Let $\epsilon>0$.  We
choose an admissible Hawking mass function 
$\mathrm{m}_{\mathrm{H}}:[r_{min},\infty)\too\rr$ such that
\be
\mathrm{m}_{\mathrm{H}}(r)\ge\frac{1}{2}r^{m-2}\left(1-\epsilon^2\right)
\ee
 on the interval $[r_1,r_2]$.
Then the distance from the level $r^{-1}(r_1)$ to the
level $r^{-1}(r_2)$ in the corresponding manifold
is greater than 
\be
z(r_2)-z(r_1)\ge (r_2-r_1)\sqrt{\frac{(1-\epsilon^2)}{\epsilon^2}}.
\ee
\end{lem}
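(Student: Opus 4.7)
The plan is a direct computation starting from Lemma~\ref{lem-ex}. By (\ref{lem-ex-z'}), the admissible Hawking mass function corresponds to a rotationally symmetric graph $z = z(r)$ with slope
\[
z'(r) = \sqrt{\frac{2\mathrm{m}_{\mathrm{H}}(r)}{r^{m-2} - 2\mathrm{m}_{\mathrm{H}}(r)}}.
\]
The hypothesis $\mathrm{m}_{\mathrm{H}}(r) \ge \tfrac{1}{2}r^{m-2}(1-\epsilon^2)$ simultaneously bounds the numerator below by $r^{m-2}(1-\epsilon^2)$ and the denominator above by $r^{m-2}\epsilon^2$. Dividing these bounds and taking square roots yields a pointwise lower bound $z'(r) \ge \sqrt{(1-\epsilon^2)/\epsilon^2}$ throughout $[r_1,r_2]$.

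The second step is to convert this pointwise slope estimate into a lower bound on the intrinsic distance between the level sets $r^{-1}(r_1)$ and $r^{-1}(r_2)$ in $M$. Using the graphical form of the metric $g = (1+(z'(r))^2)\,dr^2 + r^2 g_0$ from Lemma~\ref{lem-graph}, any smooth curve $\gamma(t) = (r(t),\theta(t))$ joining the two level sets satisfies
\[
\length(\gamma) \ge \int \sqrt{1+(z'(r(t)))^2}\,|\dot r(t)|\,dt \ge \int_{r_1}^{r_2}\sqrt{1+(z'(r))^2}\,dr \ge \int_{r_1}^{r_2} z'(r)\,dr = z(r_2) - z(r_1),
\]
because the radial coordinate along $\gamma$ must sweep across every intermediate value in $[r_1,r_2]$. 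Combining this with the pointwise bound on $z'$ then produces the desired inequality for $z(r_2) - z(r_1)$.

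I do not expect any real obstacle: the argument is purely a quantitative consequence of near-saturation of the admissibility ceiling $\mathrm{m}_{\mathrm{H}}(r) < \tfrac{1}{2}r^{m-2}$. The main conceptual content is that as $\mathrm{m}_{\mathrm{H}}(r)$ approaches this ceiling, the radial profile of the embedded graph becomes nearly vertical, and the height gain in the $z$-direction diverges; this is precisely the mechanism responsible for the arbitrarily deep gravity wells illustrated in Figure~\ref{fig-thm-pted}. The only subtlety worth flagging is that one must lower-bound \emph{intrinsic} distance in $M$ and not just the ambient $z$-coordinate, which is handled by the curve-length estimate above, and is the reason Lemma~\ref{lem-graph}'s graphical coordinates are the natural setting for this argument.
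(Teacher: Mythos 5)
Your approach is exactly the paper's: read the slope of $z(r)$ off the admissibility relation (\ref{lem-ex-z'}), bound it below using the hypothesis $2\mathrm{m}_{\mathrm{H}}(r) \ge r^{m-2}(1-\epsilon^2)$, and integrate. Your intermediate computation $z'(r) \ge \sqrt{(1-\epsilon^2)/\epsilon^2}$ is correct, and the curve-length argument you add (lower-bounding the \emph{intrinsic} distance between level sets by $z(r_2)-z(r_1)$) is a careful touch, though overkill here: $s$ is already a distance function and $ds = \sqrt{1+(z')^2}\,dr \ge z'\,dr$ along the radial geodesic.

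The problem is the final sentence, where you assert that the pointwise bound ``produces the desired inequality.'' It does not: integrating $z'(r) \ge \sqrt{(1-\epsilon^2)/\epsilon^2} = \sqrt{1-\epsilon^2}/\epsilon$ over $[r_1,r_2]$ yields $z(r_2)-z(r_1) \ge (r_2-r_1)\sqrt{1-\epsilon^2}/\epsilon$, which for $\epsilon < 1/\sqrt{2}$ is strictly weaker than the lemma's stated $(r_2-r_1)(1-\epsilon^2)/\epsilon^2$ (the latter is the square of the former). You should have flagged this instead of asserting agreement. For what it's worth, the paper's own proof makes the identical slip, writing $\sqrt{\tfrac{r^{m-2}(1-\epsilon^2)}{r^{m-2}-r^{m-2}(1-\epsilon^2)}} = \tfrac{1-\epsilon^2}{\epsilon^2}$ and dropping the square root, so the constant in the lemma statement is apparently a typo; the correct provable constant is $\sqrt{1-\epsilon^2}/\epsilon$. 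Since this still diverges as $\epsilon \to 0$, the downstream use in Example~\ref{ex-deep-well} is unaffected, but a careful reader (or proof-writer) should note the discrepancy rather than silently reproduce it.
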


\begin{proof}
By (\ref{lem-ex-z'}) we have
\be
z'(r) \ge  \sqrt{ \frac{r^{m-2}(1-\epsilon^2)}{r^{m-2}-r^{m-2}(1-\epsilon^2)}}
=\sqrt{\frac{(1-\epsilon^2)}{\epsilon^2}}.
\ee
\end{proof}

We now apply this to state and prove the example of the deep horizon
depicted in the third column of Figure~\ref{fig-thm-pted}:

\begin{example}\label{ex-deep-well}
Given $L>0$ and $\alpha_0>0$ and $\delta>0$
we claim we can construct $M^m\in \RS_m$
with $\mathrm{m}_{\mathrm{ADM}}(M^m)< \delta$ such that the
distance $d(\Sigma_{min}, \Sigma_{\alpha_0}) >L$ where $\Sigma_{\alpha_0}$
is a level set of area $\vol_{m-1}(\Sigma_{\alpha_0})=\alpha_0$
and $\Sigma_{min}$ is either the boundary of $M^m$
or the pole.
\end{example}

In Section~\ref{Sect-Gromov} we use this example to find
a sequence of Riemannian manifolds whose ADM mass approaches
$0$ that does not converge in the Gromov-Hausdorff sense
to Euclidean space due to the thin deep wells.
[Example~\ref{ex-not-GH}].

\begin{proof}
Let $r_0$ be defined so that $\omega_{m-1}r_0^{m-1}=\alpha_0$.
Let 
\be \label{ex-eq-6}
\mathrm{m}_{\mathrm{H}}(r)=\delta'=\min\left\{\delta/2, r_0^{m-2}/2\right\}   \textrm{ for } r\ge r_A.
\ee
Given any $\epsilon>0$, choose $r_\epsilon\in (0,r_0)$ so that
\be\label{ex-eq-7}
\frac{1}{2}r_\epsilon^{m-2}(1-\epsilon^2)=\delta'.
\ee
If $\mathrm{m}_{\mathrm{H}}(r)$ is a smooth function such that
\be \label{ex-eq-5}
\frac{1}{2}r^{m-2}(1-\epsilon^2)\le\mathrm{m}_{\mathrm{H}}(r)\le\frac{1}{2}r^{m-2}
\textrm{ for } r \in [r_\epsilon/2, r_\epsilon]
\ee
then
\begin{eqnarray}
z(r_\epsilon)-z(r_\epsilon/2)
&\ge& \frac{r_\epsilon\sqrt{1-\epsilon^2}}{2\epsilon}
\qquad \qquad \textrm{ by Lemma~\ref{lem-steep}, }\\
&=&
\frac{\sqrt{1-\epsilon^2}}{2\epsilon} 
\left(\frac{2\delta'}{(1-\epsilon^2)}\right)^{1/(m-2)}
\textrm{ by (\ref{ex-eq-7}),}\\
&\ge& L \qquad \textrm{ for } \epsilon \textrm{ sufficiently small
and fixed }\delta'.
\end{eqnarray}
Finally choose $r_{min}=r_\epsilon/4$ or $0$ and choose a smooth
admissible Hawking function satisfying (\ref{ex-eq-6})
and apply Lemma~\ref{lem-ex}.
\end{proof}

\section{Isometric Embeddings and the Intrinsic Flat Distance}
\label{Sect-Embed}

In this section we provide techniques for constructing
explicit filling manifolds to estimate the Intrinsic
Flat Distance.

\subsection{Review}\label{ss-review}

In the definition of the Intrinsic Flat Distance, one uses metric isometric embeddings
(a la Gromov):
\be \label{eqn-isom-embed}
\varphi: X\to Z \textrm{ such that }
d_Z(\varphi(x_1), \varphi(x_2))= d_X(x_1,x_2) \qquad \forall x_1, x_2\in X.
\ee
In contrast, one often finds Riemannian isometric embeddings (a la Nash)
as defined in (\ref{eqn-riem-isom-embed}).
Riemannian isometric embeddings are not necessarily 
metric isometric embeddings.

The metric space property of a Riemannian isometric embedding is a length space
property.   Recall that a length space is a metric space $(X, d)$ such that
\be
d_X(x_1, x_2) =\inf \{ L_X(C): \,  C(0)=x_1, \, C(1)=x_2\}
\ee
where the length, $L_X(C)$, of the curve $C: [0,1]\to X$ is the rectifiable length using $d_X$.
Given a rectifiably connected subset $Y \subset Z$, it has an induced metric
\be
d_Y(y_1, y_2) =\inf \{ L_Y(C): \,  C(0)=y_1, \, C(1)=y_2\}
\ee
where the length of the curve $C: [0,1]\to Y \subset Z$ is the rectifiable length 
using $d_Z$.  Observe that $(Y, d_Y)$ (the induced metric) is then a length space while
$(Y, d_Z)$ (the restricted metric) is just a metric space and
\be
d_Y(y_1, y_2) \ge d_Z(y_1, y_2).
\ee  
Consider as an example
\be
Y=\{(x,y,z): \, x^2+y^2+z^2=1\} \subset \E^3.
\ee
Here the restricted metric is the distance measured using line segments 
while the induced metric or intrinsic metric is the distance measured in the sphere.

Riemannian manifolds are length spaces.  If $\varphi: M \to N$ is a Riemannian
isometric embedding, then $L_M(C)= L_N(\varphi\circ C)$ for all curves $C:[0,1]\to M$.
Thus $\varphi$ is an isometric embedding from $M$ to its image, $\varphi(M)$,
where the image is endowed with the induced metric:
\be
d_M(p_1,p_2)=d_{\varphi(M)}(\varphi(p_1), \varphi(p_2)) \ge 
d_N(\varphi(p_1), \varphi(p_2))
\qquad \forall p_1, p_2 \in M.
\ee
In fact it is an isometry onto its image with the induced length metric.
However it is not an isometric embedding into $N$ unless the image $\varphi(M)$
is convex in $N$.  When $\varphi(M)$ is convex, the infimums are achieved by length
minimizing curves that lie within the set, and so, in that case, it is an isometric
embedding.  A plane is convex in $\E^3$, so it is isometrically embedded.
The equatorial sphere in a 3-sphere is isometrically embedded
into the three sphere.

Many examples of isometric embeddings are given in \cite{Gromov-filling} and 
in \cite{SorWen2} as they are an essential ingredient towards the explicit
computation of filling volumes and Intrinsic Flat Distances.  Among these is the
classic warped product:  

\begin{thm}\label{thm-warp}
Given a warped product manifold, $M^m=\R \times_f S^{m-1}$ with metric
$g_M=dr^2 + f(r)^2 g_{0}$ where $g_0$ is the standard metric on $S^{m-1}$.
This isometrically embeds into $N^{m+1}=\R \times_f S^{m}$ with metric
$g_M=dr^2 + f(r)^2 g_{0}$ where $g_0$ is the standard metric on $S^{m}$
via an isometric embedding which preserves the radial coordinate, $r$,
and maps each sphere into the equatorial sphere of that level.
\end{thm}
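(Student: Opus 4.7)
The plan is to define $\varphi:M\to N$ by $(r,\omega)\mapsto (r,\iota(\omega))$, where $\iota: S^{m-1}\hookrightarrow S^m$ is the inclusion of the equator $\{x_{m+1}=0\}$. This is a Riemannian isometric embedding because $\iota$ pulls the round metric on $S^m$ back to the round metric $g_0$ on $S^{m-1}$, so $\varphi^* g_N = dr^2 + f(r)^2 g_0 = g_M$. The substance of the theorem is that $\varphi$ is also a metric isometric embedding, i.e.\ $d_N(\varphi(p_1),\varphi(p_2))=d_M(p_1,p_2)$ for all $p_1,p_2\in M$. The inequality $d_N\circ\varphi\le d_M$ is automatic since $\varphi$ preserves the lengths of rectifiable curves.

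For the reverse inequality, I would reduce both sides to a common two-dimensional variational problem. For any rectifiable curve $\gamma(t)=(r(t),\omega(t))$ in a warped product $\R\times_f S^k$, let $s(t)=\int_0^t |\dot\omega(\tau)|_{S^k}\,d\tau$ be the $S^k$-arc length of $\omega$ up to time $t$. Then
\[
L(\gamma)=\int_0^1 \sqrt{\dot r(t)^2 + f(r(t))^2 \dot s(t)^2}\,dt,
\]
which is exactly the length, in the two-dimensional warped Riemannian plane $(\R^2,\, dr^2+f(r)^2\,ds^2)$, of the curve $t\mapsto(r(t),s(t))$. Moreover $S:=s(1)$ can be any real number $\ge d_{S^k}(\omega_1,\omega_2)$, since any such length is realized by some curve in $S^k$ between the two endpoints whenever $k\ge 1$. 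Letting $D_2(r_1,r_2,S)$ denote the geodesic distance in $(\R^2, dr^2+f(r)^2 ds^2)$ from $(r_1,0)$ to $(r_2,S)$, it follows that both $d_N(\varphi(p_1),\varphi(p_2))$ and $d_M(p_1,p_2)$ equal $\inf_{S\ge\theta}D_2(r_1,r_2,S)$, with $\theta$ equal to the $S^m$-distance, respectively the $S^{m-1}$-distance, between the fiber coordinates.

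The final step, and the place where the main geometric input enters, is to identify these two thresholds: $d_{S^m}(\iota(\omega_1),\iota(\omega_2))=d_{S^{m-1}}(\omega_1,\omega_2)$. This expresses that the equator $S^{m-1}\subset S^m$ is itself metric isometrically embedded; when $\omega_1,\omega_2$ are non-antipodal the unique great circle of $S^m$ through them lies in the totally geodesic equator (since any such great circle is the continuation of a geodesic in $S^{m-1}$), so the minimizing $S^m$-geodesic realizes the intrinsic $S^{m-1}$-distance, while the antipodal case follows by choosing any great circle through the pair, which still lies in $S^{m-1}$. I expect the most delicate point is the arc-length reparametrization that recasts $L(\gamma)$ as a 2D length, together with the realization claim for prescribed total fiber arc length; once these are carefully set up, the equality $d_N\circ\varphi=d_M$ is immediate from the identical 2D infima.
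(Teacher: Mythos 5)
The paper itself does not prove Theorem~\ref{thm-warp}; it is stated as a classical fact with references to Gromov's \emph{Filling Riemannian Manifolds} and to Sormani--Wenger, so there is no in-paper argument to compare yours against. Your proof is correct, and the strategy is the natural one: reduce both $d_M(p_1,p_2)$ and $d_N(\varphi p_1,\varphi p_2)$ to the single two-dimensional quantity $\inf_{S\ge\theta}D_2(r_1,r_2,S)$ in the warped plane $(\R^2,dr^2+f(r)^2ds^2)$, with the only difference between the two sides being the threshold $\theta$, and then observe that $\theta_M=d_{S^{m-1}}(\omega_1,\omega_2)$ and $\theta_N=d_{S^m}(\iota\omega_1,\iota\omega_2)$ coincide because the equator is a totally geodesic, metrically embedded sphere. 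The one place where the write-up is slightly loose is the lift in the ``$\le$'' direction: a near-minimizing $2$D curve from $(r_1,0)$ to $(r_2,S)$ need not have monotone $s$-component, so one cannot directly read it off as $s(t)=\int_0^t|\dot\omega|\,d\tau$. This is not a real gap: the total variation of the $s$-component is still $\ge S\ge\theta$, so one can choose $\omega(t)$ in $S^{m-1}$ (resp.\ $S^m$) with $|\dot\omega(t)|=|\dot s(t)|$ joining the given endpoints, giving a curve upstairs of the same length; alternatively, conservation of the $\partial_s$-momentum $f(r)^2\dot s$ shows that $2$D minimizing geodesics have monotone $s$, which also closes the point. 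Either remark should be made explicit, but the argument as a whole is sound.
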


Rotationally symmetric subsets of Euclidean space do not isometrically
embed into Euclidean space.  However, they can be viewed as warped 
products and be isometrically embedded into rotationally symmetric
submanifolds of higher dimensional Euclidean space:
\begin{equation*}
\varphi:M=\left\{(x,y,z):\, z=f(x^2+y^2)\right\}\to 
N=\left\{(x,y,z,w):\, z=f(x^2+y^2+w^2)\right\}
\end{equation*}
where $M$ and $N$ are endowed with the induced length metrics
and $\varphi(x,y,z)=(x,y,z,0)$.

We close this review section with the following theorem which could be
useful in applications of Intrinsic Flat Distance to general relativity.  The 
isometric embeddings given in Nirenberg's theorem 
applied in the work of Shi-Tam satisfy the hypothesis 
of this theorem \cite{Nirenberg-surface} \cite{Shi-Tam}.

\begin{thm}\label{thm-convex}
If $\varphi: M^m \to \E^{m+1}$ is a Riemannian isometric embedding such
that $\varphi(M^m)=\partial K$ where $K$ is a closed convex set in $E^{m+1}$,
then $\varphi: M^m \to Cl(\,\E^{m+1}\setminus K)$ is an isometric embedding.
\end{thm}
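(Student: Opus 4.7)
The plan is to reduce the claim to showing that the induced length metric on $Y:=\mathrm{Cl}(\E^{m+1}\setminus K)$, when restricted to the subset $\partial K=\varphi(M^m)$, coincides with the induced length metric on $\partial K$. Once this is established, the theorem follows for free: by the review in Section~\ref{ss-review}, a Riemannian isometric embedding is automatically a metric isometry onto its image equipped with the induced length metric, so $\varphi:M^m\to\partial K$ is already distance-preserving, and promoting the target space to $Y$ will not change the distances. The easy direction $d_Y(q_1,q_2)\le d_{\partial K}(q_1,q_2)$ for $q_1,q_2\in\partial K$ is immediate from the inclusion $\partial K\subset Y$: every rectifiable curve in $\partial K$ is a rectifiable curve in $Y$ of the same Euclidean length, so the infimum over curves in $Y$ is no larger.

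For the reverse inequality I would exploit the nearest-point projection $\pi:\E^{m+1}\to K$, which is well-defined and single-valued because $K$ is closed and convex. Two standard facts drive the argument. First, $\pi$ is $1$-Lipschitz with respect to the Euclidean metric; this follows from the obtuse-angle characterization $\langle x-\pi(x),z-\pi(x)\rangle\le 0$ for every $z\in K$ by summing this inequality at $x$ (with $z=\pi(y)$) and at $y$ (with $z=\pi(x)$), which yields $|\pi(x)-\pi(y)|^2\le\langle x-y,\pi(x)-\pi(y)\rangle$. Second, $\pi$ maps $\E^{m+1}\setminus K$ into $\partial K$ and fixes $\partial K$ pointwise, because the closest point in $K$ to a point outside $K$ cannot lie in the interior of $K$. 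Given any rectifiable curve $C:[0,1]\to Y$ joining $q_1$ and $q_2\in\partial K$, the composition $\pi\circ C$ is thus a rectifiable curve in $\partial K$ from $q_1$ to $q_2$ whose length is bounded by $L(C)$ via the $1$-Lipschitz property. Taking the infimum over $C$ yields $d_{\partial K}(q_1,q_2)\le d_Y(q_1,q_2)$.

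The main point to watch, though not a genuine obstacle, is the convention that the length of a curve contained in a rectifiably connected subset of $\E^{m+1}$ equals its ambient Euclidean length, so that the $1$-Lipschitz estimate in the ambient space transfers directly to the induced length metrics on both $Y$ and $\partial K$; this is built into the definitions recalled in Section~\ref{ss-review}. The only substantive geometric input is the $1$-Lipschitz property of the projection onto a closed convex set, and this is the unique step in which the convexity hypothesis of the theorem is actually used.
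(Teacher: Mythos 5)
Your proof is correct, and it takes a genuinely different route from the paper's. The paper first invokes Arzel\`a--Ascoli to produce a length-minimizing curve $C$ in $\mathrm{Cl}(\E^{m+1}\setminus K)$ between two boundary points, then argues by contradiction: if $C$ ever leaves $\partial K$, the first excursion into the open complement must (by first variation) be a straight Euclidean segment, but convexity of $K$ forces that segment back into $K$, which is absurd. Your argument is direct rather than by contradiction and does not need existence of minimizers at all: you project an arbitrary competitor curve in $Y=\mathrm{Cl}(\E^{m+1}\setminus K)$ onto $\partial K$ via the nearest-point map $\pi$, use the standard obtuse-angle inequality to show $\pi$ is $1$-Lipschitz, observe that $\pi(Y)=\partial K$ with $\pi$ fixing $\partial K$ pointwise, and conclude $d_{\partial K}\le d_Y$ by taking infima. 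The one nontrivial convexity input in the paper is that a chord between boundary points stays in $K$; in your version it is the $1$-Lipschitz property of the metric projection. Each proof is self-contained; yours is somewhat cleaner in that it sidesteps both the compactness argument and the implicit regularity discussion of geodesics in a manifold with boundary, at the price of importing the (standard, but external) fact about projections onto convex sets.
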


While this theorem must be classical, its proof isn't readily available for
citation, so we include it here:

\begin{proof}
Let $p_1, p_2\in M^m$ be joined by $C:[0,1] \to Cl(\E^{m+1}\setminus K)$,
which is the shortest among all such curves.  We know the shortest exists
by applying the Arzela Ascoli Theorem keeping in mind that a sequence of curves
of decreasing length remains in a compact subset of $\E^{m+1}$.

If the image of $C$ lies in $\partial K$, then we are done.  Assume on the
contrary, that it does not.   Let
\be
t_1 = \sup\bigg\{ t: \, C([0,t])\subset \partial K \bigg\} \in [0,1)
\ee
and let 
\be
t_2 = \sup\left\{ t: \, C((t_1,t))\subset \E^{m+1}\setminus K \right\} \in (t_1,1].
\ee
Since $C$ is a length minimizing curve, its restriction to $[t_1, t_2]$
is minimizing from $C(t_1)$ to $C(t_2)$.  This segment lies in the open
set $\E^{m+1}\setminus K$, so variation of arclength within this
flat region proves it is a straight Euclidean line segment.
Since $\varphi(C(t_1)), \varphi(C(t_2))\subset K \subset \E^{m+1}$ and $K$ is convex,
this segment must lie in $K$.  This is a contradiction.
\end{proof}

\subsection{Constructing Isometric Embeddings}  
\label{ss-constr}

In this subsection we prove the following theorem which will 
later be applied to estimate the Intrinsic Flat Distances between spaces.
This theorem is depicted in Figure 2.

\begin{thm}\label{thm-embed-const}
Let $\varphi: M \to N$ be a Riemannian isometric embedding and let
\be \label{eqn-embed-const-1}
C_M:= \sup_{p,q\in M} \left( d_M(p, q) - d_N(\varphi(p),\varphi(q)) \right).
\ee
If
\be
Z=\left\{(x,0): \,x\in N\} \cup \{(x,s): \, x\in \varphi(M),\, s\in [0, S_M]\right\} 
\subset N \times [0, S_M]
\ee
where
\be\label{eqn-embed-const-S}
S_M= \sqrt{C_M(\diam(M)+C_M)}
\ee
then $\psi: M \to Z$ defined as $\psi(x)=(\varphi(x), S_M)$, is an 
metric isometric
embedding into $(Z, d_Z)$ where $d_Z$ is the induced length metric 
from the
isometric product metric on $N \times [0,S_M]$.
\end{thm}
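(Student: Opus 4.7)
The goal is to show $d_Z(\psi(p),\psi(q))=d_M(p,q)$ for all $p,q\in M$. The space $Z$ decomposes into a ``cylinder'' $\varphi(M)\times[0,S_M]$ and a ``base'' $N\times\{0\}$ meeting along $\varphi(M)\times\{0\}$, with $d_Z$ the length metric induced by the product Riemannian metric on $N\times[0,S_M]$. For the upper bound I would lift an $\varepsilon$-near-geodesic from $p$ to $q$ in $M$ to the horizontal slice $\varphi(M)\times\{S_M\}$; since $\varphi$ is a Riemannian isometric embedding, the induced length metric on $\varphi(M)$ coincides with $d_M$, so this lifted curve has length at most $d_M(p,q)+\varepsilon$, giving $d_Z(\psi(p),\psi(q))\le d_M(p,q)$.

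For the matching lower bound, let $\gamma$ be any rectifiable curve in $Z$ from $\psi(p)$ to $\psi(q)$. If $\gamma$ stays entirely in the cylinder, then $\sqrt{|\dot\alpha|^2+\dot s^2}\ge|\dot\alpha|$ shows its length dominates that of its horizontal projection in $\varphi(M)$, which is at least $d_M(p,q)$. Otherwise $\gamma$ visits $(N\setminus\varphi(M))\times\{0\}$, and every transition between cylinder and base occurs at $\varphi(M)\times\{0\}$. I would then reduce to the case of a single dip: any interior cylinder arc from $(\varphi(p'),0)$ to $(\varphi(p''),0)$ has length at least $d_M(p',p'')\ge d_N(\varphi(p'),\varphi(p''))$ and may be swapped for a $d_N$-path in the base without increasing the total length. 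After this reduction $\gamma$ factors as a cylinder descent from $\psi(p)$ to $(\varphi(p_1),0)$, a base path from $(\varphi(p_1),0)$ to $(\varphi(p_2),0)$, and a cylinder ascent to $\psi(q)$, and Minkowski's integral inequality applied to the two cylinder segments bounds the total length below by
\begin{equation*}
\sqrt{d_M(p,p_1)^2+S_M^2}+d_N(\varphi(p_1),\varphi(p_2))+\sqrt{d_M(p_2,q)^2+S_M^2}.
\end{equation*}

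Combining the triangle inequality $d_M(p,q)\le d_M(p,p_1)+d_M(p_1,p_2)+d_M(p_2,q)$ with the defining bound $d_M(p_1,p_2)\le d_N(\varphi(p_1),\varphi(p_2))+C_M$ reduces the required estimate to
\begin{equation*}
(\sqrt{a^2+S_M^2}-a)+(\sqrt{b^2+S_M^2}-b)\ge C_M\quad\text{for all }a,b\in[0,\diam(M)].
\end{equation*}
Since $x\mapsto\sqrt{x^2+S_M^2}-x$ is strictly decreasing on $[0,\infty)$, the worst case is $a=b=\diam(M)$, which reduces algebraically to $S_M^2\ge C_M\diam(M)+C_M^2/4$; the prescribed value $S_M^2=C_M(\diam(M)+C_M)$ satisfies this with room to spare. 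The main subtlety I foresee is a clean justification of the single-dip reduction for curves with possibly infinitely many cylinder/base transitions, which I would handle by decomposing $\gamma$ along its maximal strict-base arcs and applying the interior-cylinder replacement to each intervening cylinder arc, producing a comparison curve of the desired three-segment form with no greater length.
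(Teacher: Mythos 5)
Your argument is correct and reaches the same algebraic heart of the matter --- the reduction to showing that a ``single dip'' path through the base $N\times\{0\}$ has length at least $d_M(p,q)$, together with the observation that $S_M^2\ge C_M\diam(M)+C_M^2/4$ forces the inequality --- but the route there differs from the paper's. The paper first applies the Arzela--Ascoli theorem to extract a genuine length-minimizing curve $C$ in $Z$, then uses the product structure (a minimizing geodesic in $\varphi(M)\times[0,S_M]$ projects to geodesics in each factor, so $s(t)$ is monotone on cylinder arcs and an arc that starts and ends at $s=0$ is trapped in $\varphi(M)\times\{0\}$) to conclude that $C$ has at most one descent into the base; it then estimates the three pieces and argues by \emph{contradiction} with $L(C)\le d_M(p,q)$. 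You instead keep $\gamma$ an arbitrary rectifiable competitor, carry out a swap argument replacing interior cylinder arcs from $(\varphi(p'),0)$ to $(\varphi(p''),0)$ by $\varepsilon$-near-geodesics in $N\times\{0\}$ (justified since $\varphi$ is $1$-Lipschitz into $N$), and use Minkowski's integral inequality $\int\sqrt{|\dot\alpha|^2+\dot s^2}\ge\sqrt{(\int|\dot\alpha|)^2+(\int|\dot s|)^2}$ to lower-bound the two cylinder segments without needing them to be geodesics. Your approach avoids Arzela--Ascoli (and the implicit properness hypotheses on $Z$) and the structure theory of geodesics in products, at the cost of the bookkeeping you flag: one must handle a possibly countable family of alternations, which your maximal-open-interval decomposition of $\{t:s(t)>0\}$ handles cleanly provided one notes that $\varphi(M)$ is closed in $N$ (so the endpoints of these intervals do land in $\varphi(M)\times\{0\}$) and that the swapped-in near-geodesics can be chosen with total excess length $\le\varepsilon$. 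Both arguments settle the final inequality by the same calculation: the paper bounds $\sqrt{X^2+S_M^2}>X+C_M/2$ on each ascent/descent; you phrase it as the monotonicity of $x\mapsto\sqrt{x^2+S_M^2}-x$, which is tidier.
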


\begin{figure}[h] 
   \centering
   \includegraphics[width=3in]{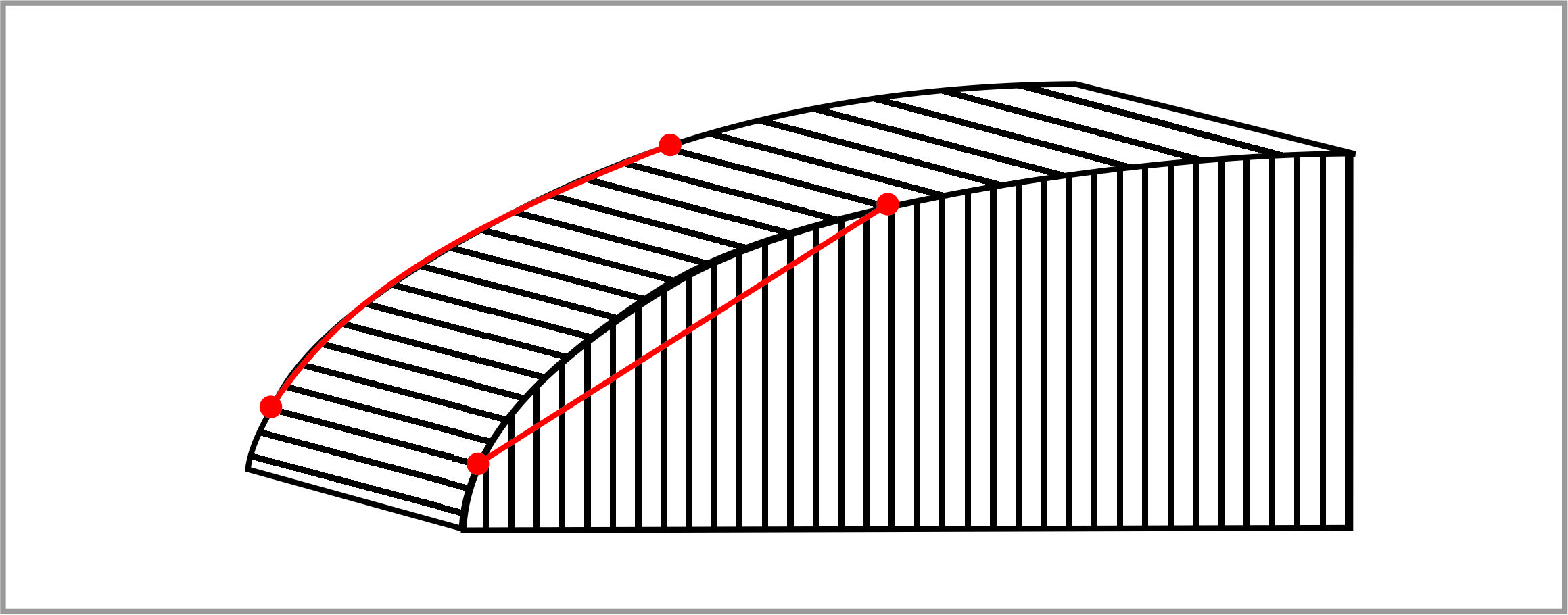} 
   \caption{Explicit Isometric Embedding into Z
   }
   \label{fig-iso-embed}
\end{figure}

Later we will provide techniques for estimating the value of 
the ``embedding constant" $C_M$.


\begin{proof}
First observe that $\psi: M \to N \times [0,S_M]$ is a Riemannian isometric
embedding.

Let $p, q\subset M$.   Let $C_i: [0,1] \to Z$ be curves parametrized
proportional to arclength running
from $\psi(p)$ to $\psi(q)$ such that 
\be
\lim_{i\to\infty} L(C_i) = d_Z(\psi(p), \psi(q)).
\ee
Observe that a closed ball in $Z$ is compact, and the $C_i$ are equicontinuous,
so by the Arzela-Ascoli Theorem, a subsequence of the $C_i$ converge to a length minimizing
curve, $C:[0,1]\to Z$, parametrized proportional to arclength 
such that $C(0)=p$ and $C(1)=q$ and
$L(C)=d_Z(\psi(p), \psi(q))$.   

If the image of $C$ lies in $\psi(M)$, then $C$ is the shortest curve in $\psi(M)\subset Z$
from $\psi(p)$ to $\psi(q)$.   Since $\psi: M \to N\times [0, S_M]$ is a 
Riemannian isometric embedding, $\psi: M \to \psi(M)$ is an isometry.
Thus there is a curve $\gamma: [0,1]\to M$ running from $p$ to $q$ such that
$\psi\circ \gamma=C$.  Furthermore $\gamma$ is length minimizing in $M$ and
parametrized proportional to arclength, so it is a minimizing geodesic in $M$.
So
\be
d_M(p,q)= L(\gamma)=L(C) =d_Z(\psi(p), \psi(q)).
\ee
Thus we need only show the image of $C$ lies in $\psi(M)$.

We will write $C(t)=(x(t),s(t))$.  Let 
\be
T_M=\bigg\{t\in [0,1]: \, C(t) \subset \varphi(M)\times[0,S_M]\bigg\}
\ee
and let
\be
T_N=\bigg\{t\in [0,1]: \, C(t) \subset N\times\{0\}\bigg\}.
\ee
If $[t_1, t_2]\subset T_M$ then (since $C$ is length minimizing
in an isometric product) the length of this interval of $C$ satisfies
\be \label{eqn-C-in-T_M}
L(C[t_1,t_2])= \sqrt{ d_{\varphi(M)}(x(t_2),x(t_1))^2  + (s(t_2)-s(t_1))^2 \,}
\ee
with $x[t_1,t_2]$ the image of a minimizing geodesic segment
in $M$ and $s[t_1,t_2]=[s(t_1),s(t_2)]$.
If $[t_1,t_2]\subset T_M$ and $t_i \subset T_N$, then $s(t_i)=0$
so in fact this segment of $C$ lies 
in $\varphi(M)\times \{0\}$.     Thus
\be 
T_M=[0, m_1] \cup [m_2,  1].
\ee
If $m_1=m_2$, then we can apply (\ref{eqn-C-in-T_M}) with $t_0=0$
and $t_1=1$ and the fact that $s(0)=s(1)=S_M$, to see that
the image of $C$  lies in $\varphi(M) \times \{S_M\}=\Psi(M)$ and we are done.

Assume on the contrary that $m_2>m_1$.
Observe that since the image of $C$ lies in $Z$,
$C:[m_1, m_2] \to  N \times \{0\}\subset Z$.  Furthermore
\be
C(m_1), C(m_2) \in \varphi(M)\times \{0\}.
\ee 
Since $C$ is length minimizing in $Z$, it is length minimizing between
$C(m_1)=(x(m_1),0)$ and $C(m_2)=(x(m_2),0)$.  Thus
\begin{eqnarray}
L(C[m_1, m_2])&=& d_{N\times\{0\}}(C(m_1), C(m_2))\\
&=& d_N(x(m_1), x(m_2))\\
&\ge& d_{\varphi(M)}(x(m_1), x(m_2)) - C_M. \label{eqn-mid-segment}
\end{eqnarray}
We will next sum up the lengths of the three segments of $C$
to reach a contradiction.  As this will involve the length on
the isometric product region, we first observe some properties
on these regions.
Let
\be
X=X(a,b)=d_{\varphi(M)}(x(a), x(b)) \le \diam(M). 
\ee
By our choice of $S_M$
we have
\be
S_M^2 =C_M\left(\diam(M)+ C_M\right) \ge  X C_M + C_M^2. 
\ee
and so
\be
X^2+ S_M^2 > X^2 + 2X C_M/2 + C_M^2/4.
\ee
By (\ref{eqn-C-in-T_M}), if $[a,b]\subset T_M$ and
$|s(a)-s(b)|= S_M$ then
\be
L(C[a,b])= \sqrt{ X(a,b)^2 + S_M^2\,}\, >\,X(a,b)+ C_M/2.
\ee

Combining this with (\ref{eqn-mid-segment}) and the fact
that $s(0)=s(1)=S_M$ and $s(m_1)=s(m_2)=0$, we have
\be
L(C)= L(C[0,m_1]) +  L(C[m_1,m_2]) + L(C[m_2,1]) 
\ee
where
\begin{eqnarray}
L(C[0,m_1]) &=&\sqrt{X(0,m_1)^2+ (s(0)-s(m_1))^2\,} \\
  &=&\sqrt{X(0,m_1)^2 + S_M^2\,}\\
 &>&X(0,m_1) + C_M/2 \\
  &=&d_{\varphi(M)}(x(0), x(m_1)) + C_M/2
   \end{eqnarray}
 \begin{eqnarray}
L(C[m_1,m_2])  &\ge & X(m_1,m_2) - C_M \\
  &=&d_{\varphi(M)}(x(m_1), x(m_2)) -C_M
 \end{eqnarray}
 \begin{eqnarray}
L(C[m_2,1]) 
 &=& \sqrt{X(m_2,1)^2 + (s(m_2)-s(1))^2\,}\\
 &=&  \sqrt{X(m_2,1)^2 + S_M^2\,}\\
 &>& X(m_1, 1) + C_M/2 \\
 &=&d_{\varphi(M)}(x(m_2), x(1)) + C_M/2.
 \end{eqnarray}
Thus by the Triangle Inequality we have
\be
L(C)> d_{\varphi(M)}(x(0),x(1))=d_{\varphi(M)}(p,q).
\ee
This contradicts $L(C) =d_Z(\psi(p), \psi(q)) \le d_{\varphi(M)}(p,q)$.
\end{proof}

\subsection{Estimating the Intrinsic Flat Distance}
\label{ss-est}

In this subsection we prove two general propositions that can
be applied to bound the Intrinsic Flat Distance between Riemannian
manifolds that have Riemannian isometric embeddings into a
common Riemannian manifold.  Recall the bound on the
Intrinsic Flat Distance given in the introduction in (\ref{eqn-def-intrinsic-flat-1})
and (\ref{eqn-def-intrinsic-flat-2}) require a metric isometric embedding
so we apply Theorem~\ref{thm-embed-const}.  The first proposition
is clear and easy to see while the second is a bit more complicated but
necessary to prove Theorem~\ref{thm-main}.

\begin{prop}\label{embed-const}
If $\varphi_i: M^m_i \to N^{m+1}$ are Riemannian isometric embeddings with 
embedding constants $C_{M_i}$ as in (\ref{eqn-embed-const-1}), and if
they are disjoint and lie in the boundary of a region $W \subset N$
then
\begin{eqnarray}
d_{\mathcal{F}}(M_1, M_2) &\le& 
S_{M_1}\left(\vol_m(M_1)+ \vol_{m-1}(\partial M_1) \right) \\
&&+S_{M_2}\left(\vol_m(M_2)+ \vol_{m-1}(\partial M_2) \right)\\
&&+ \vol_{m+1}(W) + \vol_{m}(V)
\end{eqnarray}
where $V= \partial W \setminus \left( \varphi_1(M_1) \cup \varphi_2(M_2)\right)$
where $S_{M_i}$ are defined in (\ref{eqn-embed-const-S}).

\end{prop}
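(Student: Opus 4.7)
The plan is to construct an explicit common metric space $Z$, together with metric isometric embeddings $\psi_i : M_i \to Z$, and to exhibit an explicit filling manifold $B^{m+1} \subset Z$ and excess boundary $A^m \subset Z$ whose volumes add up to the right-hand side of the claim. Applying the upper bound (\ref{eqn-def-intrinsic-flat-1})--(\ref{eqn-def-intrinsic-flat-2}) will then conclude the proof.

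Concretely, starting from the region $W \subset N$ whose boundary contains the disjoint sets $\varphi_1(M_1)$ and $\varphi_2(M_2)$, I would glue on a product cylinder over each $\varphi_i(M_i)$:
$$Z \;:=\; W \;\cup\; \bigl(\varphi_1(M_1)\times[0,S_{M_1}]\bigr) \;\cup\; \bigl(\varphi_2(M_2)\times[0,S_{M_2}]\bigr),$$
glued along $\varphi_i(M_i) \simeq \varphi_i(M_i)\times\{0\}$, and equipped with the induced length metric inherited from $N\times\R$ with its isometric product metric. Define $\psi_i : M_i \to Z$ by $\psi_i(x)=(\varphi_i(x),S_{M_i})$, the top face of the $i$-th cylinder. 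Verifying that each $\psi_i$ is a \emph{metric} isometric embedding is the key analytic step: one applies Theorem~\ref{thm-embed-const} to the subspace of $Z$ consisting of $W$ together with only the $i$-th cylinder (in that subspace, the embedding constant of $\varphi_i$ into $W$ is no larger than $C_{M_i}$, since $d_W \ge d_N$), and then one argues via an Arzel\`a--Ascoli/length-minimizing-curve argument as in the proof of Theorem~\ref{thm-embed-const} that the additional cylinder attached at the disjoint set $\varphi_j(M_j)$ cannot produce any shortcut: a minimizer between points of $\psi_i(M_i)$ entering the $j$-th cylinder would have to re-exit through $\varphi_j(M_j)$, and projecting to the base shortens it.

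For the filling manifold, take $B := W \cup (\varphi_1(M_1)\times[0,S_{M_1}]) \cup (\varphi_2(M_2)\times[0,S_{M_2}])$ with orientations chosen so that $\partial B = \psi_1(M_1) - \psi_2(M_2) + A$ as currents, where
$$A \;:=\; V \;\cup\; \bigl(\partial\varphi_1(M_1)\times[0,S_{M_1}]\bigr) \;\cup\; \bigl(\partial\varphi_2(M_2)\times[0,S_{M_2}]\bigr).$$
Then (\ref{eqn-def-intrinsic-flat-2}) is just Stokes' theorem, and Fubini computes
$$\vol_{m+1}(B) = \vol_{m+1}(W) + S_{M_1}\vol_m(M_1) + S_{M_2}\vol_m(M_2),$$
$$\vol_m(A) = \vol_m(V) + S_{M_1}\vol_{m-1}(\partial M_1) + S_{M_2}\vol_{m-1}(\partial M_2),$$
using that each $\varphi_i$ is a Riemannian isometric embedding so that $\vol_m(\varphi_i(M_i)) = \vol_m(M_i)$ and similarly on the boundary. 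Summing and invoking (\ref{eqn-def-intrinsic-flat-1}) yields the stated inequality.

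The main obstacle I anticipate is precisely the verification that $\psi_i$ remains a metric isometric embedding after the \emph{second} cylinder is glued on: Theorem~\ref{thm-embed-const} as stated deals with a single embedded submanifold and its ``collar'' over all of $N$. Here one must check two things that weren't packaged into that theorem, namely that shrinking the flat stratum from $N$ to $W$ preserves the required length-minimizing property (straightforward, since $d_W\ge d_N$), and that attaching the second cylinder over the \emph{disjoint} set $\varphi_j(M_j)\subset\partial W$ does not spoil it (this is where the disjointness hypothesis is essential). Once this is in hand, the volume bookkeeping is immediate.
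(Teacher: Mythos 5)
Your proof follows the paper's own argument essentially step for step: the paper also builds $Z^{m+1}=(M_1\times[0,S_{M_1}])\cup(M_2\times[0,S_{M_2}])\cup W^{m+1}$ glued along the $\varphi_i(M_i)$, takes the filling manifold $B=Z$ and excess boundary $A=(\partial M_1\times[0,S_{M_1}])\cup(\partial M_2\times[0,S_{M_2}])\cup V$, cites Theorem~\ref{thm-embed-const} for the metric isometric embeddings $\psi_i(x)=(\varphi_i(x),S_{M_i})$, and concludes via (\ref{eqn-def-intrinsic-flat-1}). The one place where you go beyond the paper is a point worth flagging: you correctly notice that Theorem~\ref{thm-embed-const} as literally stated covers a single strip over all of $N$, not a region $W$ with two attached cylinders, and your patch is the right one — restrict to $W$ together with only the $i$-th cylinder, noting $d_W\ge d_N$ so the embedding constant only shrinks (and a larger $S_{M_i}$ still satisfies the key inequality $S_M^2\ge C_M'(\diam M+C_M')$ in that proof), and then observe that the disjoint second cylinder over $\varphi_j(M_j)$ cannot create shortcuts since projecting $(x,s)\mapsto(x,0)$ there is $1$-Lipschitz. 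The paper takes this for granted; you have supplied the missing detail.
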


\begin{proof}
We first create a piecewise smooth manifold, 
\be
Z^{m+1}
=(M_1\times [0,S_{M_1}] )\,\,\cup\,\, (M_2\times [0,S_{M_2}])
\,\, \cup\,\, W^{m+1}
\ee
where the regions are glued together along the Riemannian
isometric embeddings $\varphi_i(M_i)\subset W^{m+1}$
to $M_i \times \{0\}$ to form $Z$.  
Applying Theorem~\ref{thm-embed-const},  
we have metric isometric embeddings 
$\psi_i: M_i \to Z$ defined by $\psi_i(x)=(\varphi_i(x), S_{M_i})$. 
Setting our filling manifold $B^{m+1}=Z^{m+1}$ as in
(\ref{eqn-def-intrinsic-flat-2}), we then have an excess boundary
\be
A^m=(\partial M_1)\times [0,S_{M_1}] 
\,\,\cup \,\,(\partial M_2)\times [0,S_{M_2}]
\,\,\cup\,\, V.
\ee
The proposition then follows from (\ref{eqn-def-intrinsic-flat-1}).
\end{proof}

The next proposition will be applied to prove Theorem~\ref{thm-main}.
It concerns pairs of manifolds which do not have global
Riemannian isometric embeddings into a common manifold $U^{m+1}$:

\begin{prop}\label{embed-const-2}
If $M^m_i$ are Riemannian manifolds and $U^m_i\subset M^m_i$
are submanifolds that have
Riemannian isometric embeddings $\varphi_i: U^m_i \to N^{m+1}$ with 
embedding constants $C_{U_i}$ as in (\ref{eqn-embed-const-1}), and if
their images are disjoint and lie in the boundary of a region $B_1 \subset N$
then
\begin{eqnarray}
d_{\mathcal{F}}(M_1, M_2) &\le& 
S_{U_1}\left(\vol_m(U_1)+ \vol_{m-1}(\partial U_1) \right)\\
&&+S_{U_2}\left(\vol_m(U_2)+ \vol_{m-1}(\partial U_2) \right)\\
&&+ \vol_{m+1}(B_1) + \vol_{m}(V) \\
&&+\vol_{m}(M_1\setminus U_1)
+\vol_m(M_2\setminus U_2)
\end{eqnarray}
where 
$V= \partial B_1 \setminus \left( \varphi_1(U_1) \cup \varphi_2(U_2)\right)$
where $S_U$ are defined in (\ref{eqn-embed-const-S}).
\end{prop}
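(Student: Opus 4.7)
The plan is to adapt the construction of Proposition~\ref{embed-const}, but with the cylinders of Theorem~\ref{thm-embed-const} attached only to the submanifolds $U_i \subset M_i$, while the remainders $M_i \setminus U_i$ sit directly as pieces of the ambient $Z$. Explicitly, I would form the piecewise smooth Riemannian space
\[
Z = M_1 \,\cup\, (U_1 \times [0, S_{U_1}]) \,\cup\, B_0 \,\cup\, (U_2 \times [0, S_{U_2}]) \,\cup\, M_2,
\]
where each cylinder $U_i \times [0, S_{U_i}]$ is glued at its top $U_i \times \{S_{U_i}\}$ to $U_i \subset M_i$ via the identity and at its bottom $U_i \times \{0\}$ to $\varphi_i(U_i) \subset \partial B_0$ via $\varphi_i$. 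I equip $Z$ with the induced length metric.

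The central task is to show that the natural inclusions $\psi_i: M_i \hookrightarrow Z$ are metric isometric embeddings. The inequality $d_Z(\psi_i(p), \psi_i(q)) \leq d_{M_i}(p, q)$ is automatic. For the reverse, I would take a length-minimizing curve $C$ in $Z$ from $\psi_i(p)$ to $\psi_i(q)$ (existence via Arzel\`a--Ascoli, as in the proof of Theorem~\ref{thm-embed-const}) and decompose it into maximal arcs inside $M_i$ and excursions that exit and re-enter $M_i$ through $U_i$. By the triangle inequality in $M_i$, it suffices to show each excursion from $x$ to $y$ in $U_i$ has length at least $d_{M_i}(x, y)$. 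For this I would mimic the proof of Theorem~\ref{thm-embed-const}, with $U_i$ playing the role of $M$, its cylinder playing the role of the attached cylinder, and $B_0$ playing the role of $N$. Since $d_{B_0} \geq d_N$ on $\varphi_i(U_i)$, the embedding constant of $\varphi_i: U_i \to B_0$ is at most $C_{U_i}$, so the height $S_{U_i}$ from~(\ref{eqn-embed-const-S}) is large enough to rule out any dip-and-cross shortcut through $B_0$ alone. A potential detour through the opposite cylinder into $M_j$ between entry point $\varphi_j(a)$ and exit point $\varphi_j(b)$ costs at least $2 S_{U_j} + d_{M_j}(a,b)$, and by the analogous embedding-constant bound applied to $\varphi_j$, this dominates $d_{B_0}(\varphi_j(a), \varphi_j(b))$, so no such shortcut helps.

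With the metric isometric embeddings in hand, I would take
\[
B^{m+1} = (U_1 \times [0, S_{U_1}]) \,\cup\, B_0 \,\cup\, (U_2 \times [0, S_{U_2}]),
\]
\[
A^m = (M_1 \setminus U_1) \,\cup\, ((\partial U_1) \times [0, S_{U_1}]) \,\cup\, V \,\cup\, ((\partial U_2) \times [0, S_{U_2}]) \,\cup\, (M_2 \setminus U_2),
\]
so that~(\ref{eqn-def-intrinsic-flat-2}) is satisfied: the $\varphi_i(U_i)$ portion of $\partial B_0$ cancels against $U_i \times \{0\}$ and the face $U_i \times \{S_{U_i}\}$ of each cylinder identifies with $\psi_i(U_i) \subset \psi_i(M_i)$, leaving $\psi_1(M_1) - \psi_2(M_2)$ modulo $A$. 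A direct volume computation using the product structure of the cylinders then yields
\begin{align*}
\vol_{m+1}(B) + \vol_m(A)
&= \vol_{m+1}(B_0) + S_{U_1}\bigl(\vol_m(U_1) + \vol_{m-1}(\partial U_1)\bigr) \\
&\quad + S_{U_2}\bigl(\vol_m(U_2) + \vol_{m-1}(\partial U_2)\bigr) + \vol_m(V) \\
&\quad + \vol_m(M_1 \setminus U_1) + \vol_m(M_2 \setminus U_2),
\end{align*}
which is the claimed bound via~(\ref{eqn-def-intrinsic-flat-1}) (with $B_1 = B_0$).

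The main obstacle is the metric-isometric-embedding step: ruling out detours through the opposite cylinder-and-$M_j$ piece for a minimizing curve between two points of $M_i$. The definitions of $S_{U_i}$ and $C_{U_i}$ are tailored to make these potential shortcuts impossible, but extracting this rigorously requires adapting the diagonal-cylinder estimate from the proof of Theorem~\ref{thm-embed-const} in the presence of the second attached piece, tracking how the $2 S_{U_j}$ round-trip cost through cylinder $j$ absorbs the at-most-$C_{U_j}$ savings that $M_j$ could offer over $B_0$.
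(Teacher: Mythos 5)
Your construction of $Z$ is genuinely different from the paper's. The paper attaches cylinders $U_i\times[0,2S_i]$ of \emph{double} height and also thickens the remainders into $(M_i\setminus U_i)\times[S_i,2S_i]$, gluing along $\partial U_i\times[S_i,2S_i]$, so that $\psi_i(M_i)$ sits at the \emph{interior} level $\{s=S_i\}$; this lets the authors argue that paths dipping above level $S_i$ are at least as long (product structure) while paths dipping below are controlled by Theorem~\ref{thm-embed-const}, and that any path crossing from the thickened $M_i\setminus U_i$ piece into the cylinder ``must pass through $\partial U_i\times[S_i,2S_i]$ and would be shorter if it stayed in $M_i\times\{S_i\}$.'' You instead leave $M_i$ unthickened at the top of a single height-$S_{U_i}$ cylinder, so $\psi_i(M_i)$ lies on the boundary of $Z$. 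Your filling manifold $B$, excess boundary $A$, and resulting volume bound agree with the paper's, so the estimate is the same. Your identification of the main difficulty---the metric-isometric-embedding step in the presence of the opposite cylinder and $M_j$---is correct, and is glossed over in roughly the same way in the paper's own proof. One point worth sharpening in your sketch: the reason a detour through cylinder $j$ never helps is not really that ``the $2S_{U_j}$ round-trip cost absorbs the at-most-$C_{U_j}$ savings that $M_j$ could offer over $B_0$.'' Rather, the level-$0$ slice of cylinder $j$ is identified via the Riemannian isometric embedding $\varphi_j$ with $\varphi_j(U_j)\subset\partial B_0$, so any path along that slice already exists in $B_0$ with exactly the same length, and climbing to positive height in cylinder $j$ (and a fortiori into $M_j$) only adds length by the product structure; the projection of cylinder $j$ onto its level-$0$ slice is $1$-Lipschitz. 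Thus the $j$-side never shortens $d_{B_0}$, and the Theorem~\ref{thm-embed-const}-style estimate applied to the $i$-side cylinder alone already controls the excursion. With that correction your route is sound and arguably cleaner than the paper's.
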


\begin{proof}
Let $S_i=S_{U_i}$ as in Theorem~\ref{thm-embed-const}.
We first create a piecewise smooth manifold, 
\begin{eqnarray}
Z^{m+1}
&=&(U_1\times [0,2S_1] )\,\,\cup\,\, (U_2\times [0,2S_2])
\,\, \cup\,\, B_1^{m+1} \\
&& \cup \,\,
(M_1\setminus U_1)\times [S_1, 2S_1] \,\,\cup\,\, (M_2\setminus U_2)\times [S_2,2S_2]
\end{eqnarray}
where the regions are glued together along the Riemannian
isometric embeddings $\varphi_i(U_i)\subset B_1^{m+1}$
to $U_i \times \{0\}$ 
and along $\partial U_i \times [S_i,2S_i]$
to form $Z$.  
Applying Theorem~\ref{thm-embed-const},  
we have metric isometric embeddings 
$\psi_i: U_i \to Z$ defined by $\psi_i(x)=(\varphi_i(x), S_i)$. 
In fact these extend to isometric embeddings
$\psi_i: M_i \to Z$ defined by $\psi_i(x)=(\varphi_i(x), S_i)$
since this is a metric isometric embedding 
on $M_i\setminus U_i$ and any path in $Z$ running
from $M_i \setminus U_i \times [S_i,2S_i]$
to $U_i\times [0,2S_i]$ must pass through
$\partial U_i \times [S_i,2S_i]$ and would be shorter
if it stayed in $M_i\times \{S_i\}$. 

Our filling manifold is chosen to be
\be
B^{m+1}
=(U_1\times [0,S_1] )\,\,\cup\,\, (U_2\times [0,S_2])
\,\, \cup\,\, B_1^{m+1}.
\ee
Then by
(\ref{eqn-def-intrinsic-flat-2}), we then have an excess boundary
\begin{eqnarray}
A^m&=&(\partial U_1)\times [0,S_1] 
\,\,\cup \,\,(\partial U_2)\times [0,S_2]
\,\,\cup\,\, V \\
&&\cup \,\, M_1\setminus U_1 \,\, \cup \,\, M_2 \setminus U_2.
\end{eqnarray}
The proposition then follows from (\ref{eqn-def-intrinsic-flat-1}).
\end{proof}

\subsection{The Embedding Constant for Graphs}
\label{ss-embed}

In this section we provide means for estimating the embedding constant, $C_M$,
as defined in (\ref{eqn-embed-const-1}):
\be 
C_{M}:= \sup_{p,q\in M} \,\,\, \bigg(d_M(p,q) - d_N(\varphi(q),\varphi(q)) \bigg).
\ee
when the manifold $M$ has a Riemannian isometric embedding $\varphi:M^m \to \E^{n}$ defined as a graph.

\begin{thm}\label{thm-Z}
Let $M^m$ be a compact Riemannian manifold with boundary
defined by the graph
\be
M^m=\{(x,z):\, z=F(x), \, x\in W\} \subset W \times \R
\ee
where $F: W\to \R$ is differentiable and $W$ is
a Riemannian manifold with boundary.
Viewed
as a Riemannian isometric embedding into $W\times \R$
the embedding constant satisfies
\be \label{eqn-C_M-bound-1}
C_M\le 2\diam(W) \sup\{|\grad F_x|: \, x\in W\}. 
\ee
\end{thm}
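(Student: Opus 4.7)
The plan is to bound the difference $d_M(p,q) - d_N(\varphi(p), \varphi(q))$ uniformly for $p, q \in M$ by lifting curves from the base $W$ to the graph and projecting in the other direction, where $N = W \times \R$. Write $p = (x_1, F(x_1))$ and $q = (x_2, F(x_2))$, and set $L := \sup_{x \in W}|\grad F_x|$.

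For the upper estimate on $d_M(p,q)$, I would take any curve $\alpha : [0,1] \to W$ from $x_1$ to $x_2$ and consider its graph lift $\gamma(t) = (\alpha(t), F(\alpha(t)))$ on $M$. Its speed in the product metric satisfies
\[
|\gamma'(t)|^2 = |\alpha'(t)|_W^2 + \langle \grad F_{\alpha(t)}, \alpha'(t)\rangle^2 \le (1+L^2)\,|\alpha'(t)|_W^2
\]
by Cauchy--Schwarz, so the curve length obeys $L_{W\times\R}(\gamma) \le \sqrt{1+L^2}\, L_W(\alpha)$. Since $\varphi$ is a Riemannian isometric embedding, this equals $L_M(\gamma)$; taking the infimum over all $\alpha$ (every path in $W$ lifts to one on the graph) gives $d_M(p,q) \le \sqrt{1+L^2}\, d_W(x_1,x_2)$. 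For the matching lower estimate on $d_N(\varphi(p), \varphi(q))$, I would use that the coordinate projection $\pi : W \times \R \to W$ is $1$-Lipschitz, which immediately yields $d_W(x_1, x_2) \le d_{W\times\R}(\varphi(p), \varphi(q))$.

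Combining these two estimates and applying the elementary inequality $\sqrt{1+L^2} - 1 \le L$ (which follows from $(1+L)^2 \ge 1 + L^2$) gives
\[
d_M(p,q) - d_N(\varphi(p),\varphi(q)) \le \bigl(\sqrt{1+L^2}-1\bigr)\, d_W(x_1,x_2) \le L \cdot \diam(W),
\]
which is in fact slightly stronger than the stated bound $2 L \cdot \diam(W)$. Taking the supremum over $p, q \in M$ yields the claim. There is no real obstacle: the argument relies only on the defining property of a Riemannian isometric embedding (lengths of curves are preserved), the trivial upper bound coming from lifting paths $\alpha \subset W$ to the graph, and the trivial lower bound coming from projecting to the $W$-factor. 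The main point of care is to remember that $d_M$ denotes the intrinsic length metric on $M$ (equivalently, the induced length metric on $\varphi(M) \subset W \times \R$), rather than the restriction of $d_{W \times \R}$ to $\varphi(M)$.
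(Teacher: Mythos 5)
Your proposal is correct, and in fact proves the slightly sharper bound $C_M \le \diam(W)\,\sup_W|\grad F|$, a factor of two better than (\ref{eqn-C_M-bound-1}). It also takes a genuinely different route from the paper. The paper's argument fixes a pair $p_0,p_1$ achieving the supremum (using compactness), writes out the length-minimizing curve $C$ in $W\times\R$ and its graph lift $\tilde C$ in coordinates, reduces to $C_M\le L(\tilde C)-L(C)$, and then splits the domain $[0,h]$ into the set where $|\tilde z'(t)|\ge|z_1-z_0|/h$ and its complement, comparing the two integrands pointwise via the fundamental theorem of calculus applied to $y\mapsto\sqrt{1+y^2}$; this yields $C_M\le 2\int_0^h|\tilde z'|\,dt$. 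Your argument instead decomposes the estimate into two one-sided comparisons against the common quantity $d_W(x_1,x_2)$: an upper bound $d_M(p,q)\le\sqrt{1+L^2}\,d_W(x_1,x_2)$ by lifting near-minimizing paths from $W$ to the graph and using Cauchy--Schwarz, and a lower bound $d_{W\times\R}(\varphi(p),\varphi(q))\ge d_W(x_1,x_2)$ from $1$-Lipschitzness of the factor projection, followed by the elementary inequality $\sqrt{1+L^2}-1\le L$. The one thing the paper's version buys, alluded to in \remref{rmrk-embed-const}, is a pointwise formulation: the paper's manipulation naturally lands on $2\int_0^h|\tilde z'(t)|\,dt$, i.e.\ an integral of $|\grad F|$ along a specific $W$-minimizing curve, before the final $\sup$-bound is applied. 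Your computation can also be sharpened to an integral along $\alpha$ by keeping $|\grad F_{\alpha(t)}|$ inside the Cauchy--Schwarz step rather than replacing it by $L$ immediately, so this advantage is not intrinsic to the paper's approach; overall your argument is shorter, avoids picking an achieving pair, and handles the potential nonsmoothness of $W$-geodesics near the boundary transparently by working with infima over all paths.
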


Later we will apply this theorem with 
\be
W=Ann_0(R_0,R_1) \subset \E^m
\ee
for rotationally symmetric $F$.  

\begin{rmrk}If one examines the
proof one can see that $C_M$ is really bounded by
an integral of $|\grad F|$ over a length minimizing
curve in $W$.  However, the estimate we've written
in Theorem~\ref{thm-Z} suffices for our purposes.
\end{rmrk}

\begin{proof}
Since $M^m$ is compact, there exists a pair of points $p_0,p_1\in M$ 
such that
\be \label{eqn-est-C_M-1}
C_{M}=  d_{M^m}(p_0, p_1) - d_{W \times \R}(p_0, p_1).
\ee
We write $p_i=(x_i, z_i)$.  

Let 
$C$ be a length minimizing curve
in $W\times \R$ from $p_0$ to $p_1$.    
We write $C(t)=(x(t), z(t))\in W \times \R$.  
Then $x(t)$ is a length minimizing curve
in $W$ from $x_0$ to $x_1$ because it is the projection (in
an isometric product) of a length minimizing curve.
Let 
\be \label{eqn-est-C_M-15}
h:=d_W(x_0,x_1)\le \diam(W).
\ee
We now parametrize $C(t)$ and $x(t)$ so that
$x:[0,h]\to W$ is parametrized by arclength
and $x(0)=x_0$, $x(h)=x_1$, $z(0)=z_0$ and $z(h)=z_1$.

Observe that $\{(x(t),z): t\in [0,h], z\in \R\}$ is isometric
to a flat Euclidean strip $[0,h]\times \R$ with the metric restricted
from $W\times \R$.  The isometry $\psi(t,z)=(x(t),z)$.   This implies that
$z(t)$ is linear in $t$ and
\be \label{eqn-est-C_M-2}
z'(t)= \frac{(z_1-z_0)}{h}
\ee
Note that $x(t)$ is length minimizing in a manifold with boundary so it is
not necessarily a smooth geodesic.  However it is smooth away from a discrete
set of points.   Where it is smooth $g_W(x'(t), x'(t))=1$.  

Define $\tilde{C}(t)=(\tilde{x}(t), \tilde{z}(t)) \in M\subset W\times \R$
where $\tilde{x}(t)=x(t)$ and $\tilde{z}(t)=F(x(t))$.   
Observe that
where $x(t)$ is smooth, we have
\be \label{eqn-est-C_M-7}
|\tilde{z}'(t)|=|\grad F_{x(t)}|.
\ee
Since $p_0,p_1\in M$, $\tilde{z}(0)=z(0)$ and $\tilde{z}(h)=z(h)$.
Thus $\tilde{C}$ is a curve from $p_0$ to $p_1$ in $M$.

Let $\gamma$ be a length minimizing curve in $M$
from $p_0$ to $p_1$.  Then by (\ref{eqn-est-C_M-1})
\be
C_{M}=  L(\gamma)-L(C)\le L(\tilde{C})-L(C).
\ee

Since $x(t)$ is smooth on a set of full measure in $[0,h]$,
so are $C$ and $\tilde{C}$, and  we have
\begin{eqnarray}
C_M&=& \int_0^h g_{W\times \R}(\tilde{C}'(t),\tilde{C}'(t))^{1/2}\,dt
-\int_0^h g_{W\times \R}({C}'(t),{C}'(t))^{1/2}\,dt\\
&=& \int_0^h (1 + (\tilde{z}'(t))^2)^{1/2}- (1 + ({z}'(t))^2)^{1/2}\,dt\\
&=& \int_0^h (1 + (\tilde{z}'(t))^2)^{1/2}- (1 + (z_1-z_0)^2/h^2)^{1/2}\,dt.
\end{eqnarray}
Let 
\be
T=\{ t\in [0,h] :  |\tilde{z}'(t)| \ge |z_1-z_0|/h\}.
\ee
Since $\tilde{z}$ is smooth away from a finite collection of points,
we see that there exists 
\be
0\le a_1<b_1<a_2<b_2<\cdots <a_n<b_n\le h
\ee
such that
\be
T =\bigcup_{i=1}^n [a_i,b_i].
\ee
So
\begin{eqnarray}
C_M&\le
&\sum_{i=1}^n\int_{a_i}^{b_i} 
(1 + (\tilde{z}'(t))^2)^{1/2}- (1 + (z_1-z_0)^2/h^2)^{1/2}\,dt\\
&=&\sum_{i=1}^n\int_{a_i}^{b_i} 
\left( \int_{|z_1-z_0|/h}^{|\tilde{z}'(t)|} 
\frac{d}{dy} \sqrt{y^2+1} \, dy \, \right) dt\\
&=&\sum_{i=1}^n\int_{a_i}^{b_i} 
\left( \int_{|z_1-z_0|/h}^{|\tilde{z}'(t)|} 
\frac{y}{ \sqrt{y^2+1}} \, dy \, \right) dt \\
&\le&\sum_{i=1}^n\int_{a_i}^{b_i} 
\left( \int_{|z_1-z_0|/h}^{|\tilde{z}'(t)|} 1 \, dy \, \right) dt \\
&=& \sum_{i=1}^n\int_{a_i}^{b_i}  |\tilde{z}'(t)|\,-\, |(z_1-z_0)/h| \, \,dt \\
&\le& \sum_{i=1}^n\int_{a_i}^{b_i}  |\tilde{z}'(t)|\,+\, |z_1-z_0|/h \, \,dt \\
&\le& \int_{0}^{h}  |\tilde{z}'(t)|\,+\, |z_1-z_0|/h \,\, dt \\
&=& \int_{0}^{h}  |\tilde{z}'(t)|\,dt \,\,+\,\,  |z_1-z_0|  
\end{eqnarray}
Since $\tilde{z}(0)=z_0$ and $\tilde{z}(h)=z_1$ we have
\begin{eqnarray}
C_M&\le & \int_{0}^{h}  |\tilde{z}'(t)|\,dt \,+\,\left| \int_{0}^{h} \tilde{z}'(t)\,dt \right|   \\
&\le & \int_{0}^{h}  |\tilde{z}'(t)|\,dt \,+\, \int_{0}^{h} |\tilde{z}'(t)|\,dt    \\
&=&  2 \int_0^h |\tilde{z}'(t)| \, dt. 
\end{eqnarray}
To obtain (\ref{eqn-C_M-bound-1}) 
we apply (\ref{eqn-est-C_M-7}) and the fact that $h\le \diam(W)$
from (\ref{eqn-est-C_M-15}).  
\end{proof}


\begin{rmrk}\label{rmrk-embed-const}
At the end of the proof we could have taken
a much more subtle estimate of $C_M$ as an integral
of $|\grad F|$ over a curve.  However this overestimate
suffices for our purposes.
\end{rmrk}

\section{Positive Mass Stability Theorem}
\label{Sect-Pos}

In this section we will prove Theorem~\ref{thm-main}
by constructing an explicit filling between the
two tubular neighborhoods, $T_D(\Sigma_{\alpha_0})\subset M^m$
and $T_D(\Sigma_{\alpha_0})\subset Z^m$.  
We have a Riemannian embedding of $M^m$
and $\E^m$
into $\E^{m+1}$ by Lemma~\ref{lem-graph} which we
can use to fill in the space between the tubular neighborhood in $M^m$
and its projection in $\E^m$.  To create a metric
isometric embedding we
will attach a strip by applying Theorem~\ref{thm-Z}
as in Figure~\ref{fig-flat-filling}. 

To define the filling manifold and excess boundary more
precisely, we recall the radial function:
\be
r(\Sigma_\alpha)= \left( \alpha /\omega_{m-1}\right)^{1/(m-1)}.
\ee
Setting 
\begin{eqnarray}
r_{min}&=& \inf\{r(p): \, r\in M^m\}\\
r_{D-}&=&\inf \{ r(p): p\in T_D(\Sigma_0) \subset M^m\}\\
r_0&=& r(\Sigma_{\alpha_0})=(\alpha_0/\omega_{m-1})^{1/(m-1)}\\
r_{D+}&=&\sup \{ r(p): p\in T_D(\Sigma_0) \subset M^m\}.
\end{eqnarray}
we see that $r_{min}\le r_{D-}\le r_{D+}$ all depend on the
manifold while $r_0$ is an invariant for Theorem~\ref{thm-main}.
Since
\be
r_0-D \le r_{D-}\le r_0 \le r_{D+} \le r_0+D \,\,\textrm{ and }\,\,0\le r_{min}
\ee
the tubular neighborhood in $M^m$ projects to
\be
r^{-1}(r_{D-}, r_{D+})\,\,
\subset\,\, T_D(\Sigma_0)\,\, \subset\,\, \E^m.
\ee

We will define a filling between the tubular neighborhood and
this projection.  See Figure~\ref{fig-flat-filling}.  The region
\be
A_0\,\,=\,\,Ann_0\left(r_{D+}, r_0+D\right)\,\,\subset\,\, T_D(\Sigma_{\alpha_0})\,\,\subset\,\, \E^m,
\ee
will form part of our excess boundary and its volume will
be estimated in Lemma~\ref{lem-switch-1}.  The inner
region is more complicated as their may be a deep well in $M^m$.

To avoid difficulties with deep wells, in Lemma~\ref{lem-well},
we will choose $r_\epsilon'\in (0,r_0)$ 
where $r_0=r(\Sigma_{\alpha_0})=(\alpha_0/\omega_{m-1})^{1/(m-1)}$ 
and cut off the
well 
\be\label{A-1}
A_1=r^{-1}(r_{D-},r_\epsilon)\subset T_D(\Sigma_{\alpha_0})\subset M^m
\ee
and the corresponding annulus
\be\label{A-2-1}
A_2=A_{2,1}=Ann_0(r_0-D,r_\epsilon) \subset T_D(\Sigma_{\alpha_0})\subset \E^m 
\ee
where 
\be \label{r-epsilon-prime}
r_\epsilon=\max\{r_\epsilon', r_{D-}\}
\ee
Note that when $r_0-D\le 0$, $A_2=B_0(r_\epsilon)$
as depicted in Figure~\ref{fig-flat-filling}.  The volumes of 
regions $A_1$ and $A_2$ are uniformly estimated in Lemma~\ref{lem-well}.

Note when $r_\epsilon'\le r_{D-}$ our tubular neighborhood
is not intersecting with a deep well and we have
$A_1=\emptyset$.  In that case we set
\be\label{A-2-2}
A_2=A_{2,2}=Ann_0(r_0-D,r_{D-}) \subset T_D(\Sigma_{\alpha_0})\subset \E^m.
\ee
The volume of $A_2$ is bounded uniformly in Lemma~\ref{lem-switch-2}.

Next we choose Riemannian isometric embeddings of 
\begin{equation*}
{r^{-1}(r_\epsilon, r_{D+})=
T_D(\Sigma_{\alpha_0})\setminus A_1
\subset M^m}
\textrm{ and }
{r^{-1}(r_\epsilon, r_{D+})= 
T_D(\Sigma_{\alpha_0})\setminus (A_0\cup A_2)
\subset \E^m}
\end{equation*}
into 
$r^{-1}(r_\epsilon, r_{D+})\subset \E^{m+1}$
such that $\Sigma_{\alpha_\epsilon}\subset M^m$
and $\Sigma_{\alpha_\epsilon}\subset \E^m$ coincide.
Lemma~\ref{lem-graph} determines this embedding up to
a vertical shift, so this is possible.

This determines the region:
\be \label{B_1}
B_1=\{(x_1,...x_m,z): \,
z\in [0,F(r)],\, r\in (r_\epsilon, r_{D+})\}\subset
r^{-1}(r_\epsilon,r_{D+})\subset \E^{m+1},
\ee
between these Riemannian isometric embeddings.
The region $B_1$ is not a filling manifold.
The Euclidean annulus has a metric isometric
embedding, but not the region in $M^m$.
So we add a strip 
\be \label{B_2}
B_2=[0,S_M]\times r^{-1}[r_\epsilon, r_{D+}]\subset [0,S_M]\times M
\ee
where width $S_M$ is determined in Lemma~\ref{lem-width}
using Theorems~\ref{thm-embed-const} and~\ref{thm-Z} and isometrically
embed 
\be
\textcolor{blue}{r^{-1}(r_\epsilon, r_{D+})\subset M^m} \textrm{ and } 
\textcolor{red}{r^{-1}(r_\epsilon, r_{D+})\subset \E^m}
\ee
into $B_1 \cup B_2$ as in Figure~\ref{fig-flat-filling}.

\begin{figure}[h] 
   \centering
   \includegraphics[width=3in]{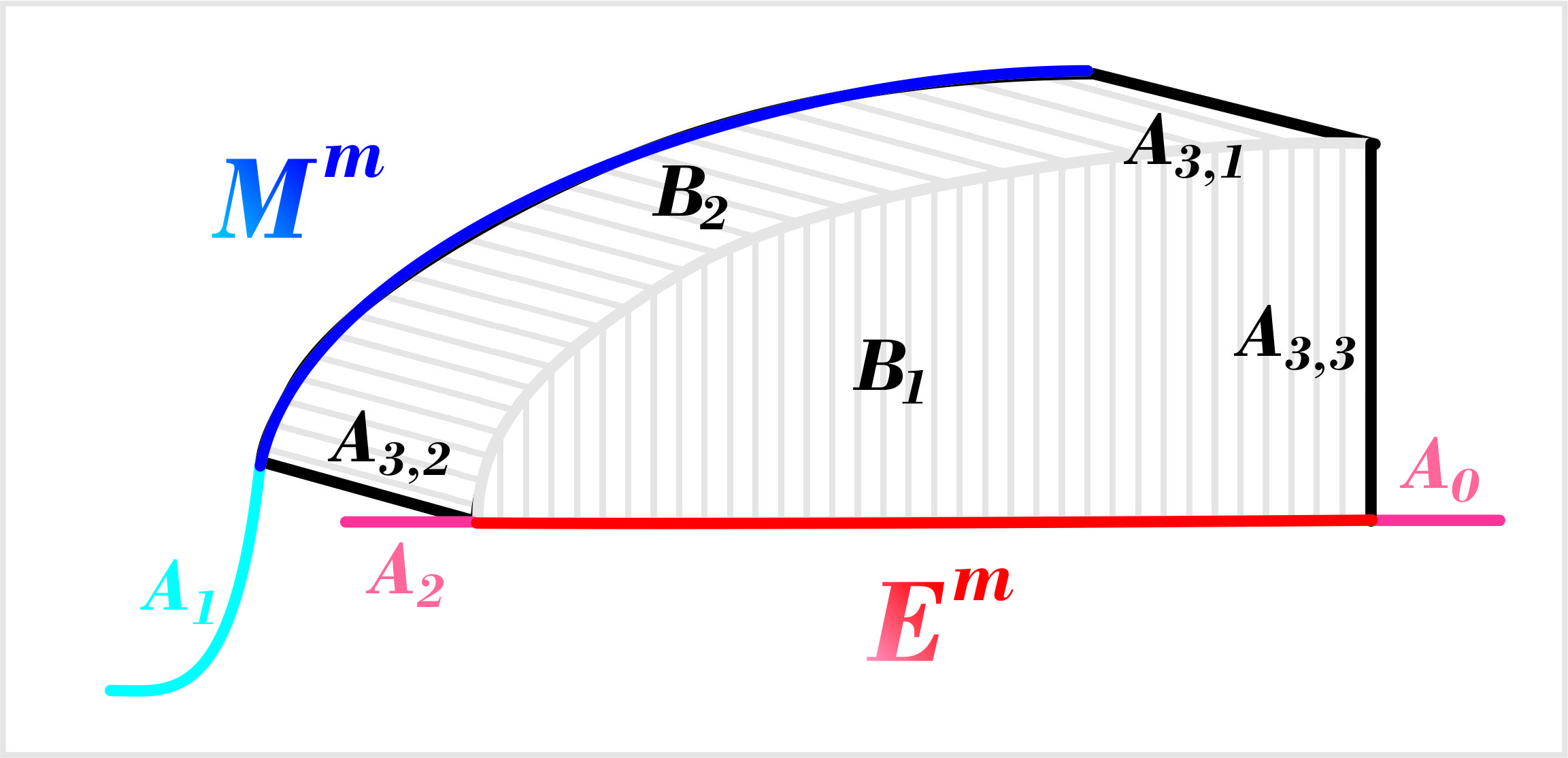} 
   \caption{Explicit Isometric Embedding into Z
   }
   \label{fig-flat-filling}
\end{figure}

Applying Proposition~\ref{embed-const-2}, 
we then know
\be \label{eqn-A_i-B_i}
d_{\mathcal{F}}(\,\textcolor{blue}{T_D(\Sigma_{\alpha_0})
\subset M^m}\,
,\,\textcolor{red}{T_D(\Sigma_{\alpha_0})\subset \E^m}
\,)\le \vol_{m+1}(B)
+\vol_m(A)
\ee
where $B=B_1\cup B_2$ is the filling manifold
and 
\be
A=A_0+A_1+A_2+A_{3,1}+A_{3,2} + A_{3,3} 
\ee
is the excess boundary with
\be\label{A-3-1}
A_{3,1}=[0,S_M]\times r^{-1}\{r_{D+}\}\subset [0,S_M]\times M,
\ee
\be\label{A-3-2}
A_{3,2}=[0,S_M]\times r^{-1}\{r_\epsilon\}\subset [0,S_M]\times M
\ee
\be\label{A-3-3}
A_{3,3}=r^{-1}\{r_{D+}\}\subset \partial B_1 \subset \E^{m+1}.
\ee
By estimating the volumes of these regions we will complete
the proof of Theorem~\ref{thm-main}

\subsection{Cutting off the Deep Wells}\label{ss-cut-well}

As seen in Figure~\ref{fig-thm-pted}, the manifolds with small mass
can have arbitrarily deep wells.  Here we determine
where to cut them off.

\begin{lem} \label{lem-well}
Given $\epsilon>0$, $D>0$ $\alpha_0>0$ and 
$M^m\in \RS$, and 
a symmetric sphere $\Sigma_{\alpha_0}\in M^m$ 
of area $\alpha_0$.
Let
\be
\alpha_\epsilon= \min\left\{\epsilon/ (16D),\,\, 
\omega_{m-1}^{1/m}(\epsilon/8)^{(m-1)/m},\,\, \alpha_0\right\}.
\ee
Choose 
\be
r'_\epsilon=r(\Sigma_{\alpha_\epsilon})=
(\alpha_\epsilon/\omega_{m-1})^{1/(m-1)}>0.
\ee
Then defining sets $A_1$ and $A_{2,1}$ in (\ref{A-1}) and
(\ref{A-2-1}) respectively, we have
\be
\vol\left(A_1\right)\le \epsilon/8, \,\vol\left(A_{2,1}\right)\le \epsilon/8, \textrm{ and }\vol\left(B_0(r_\epsilon')\subset \E^m\right)\le \epsilon/8.
\ee
\end{lem}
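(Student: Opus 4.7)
My plan is to verify each of the three volume bounds separately, matching the three terms in the minimum that defines $\alpha_\epsilon$ to the three estimates. The key observation is that all three regions are controlled either through an outer spherical area bounded by $\alpha_\epsilon$ (for $A_1$) or through a Euclidean radius bounded by $r'_\epsilon=(\alpha_\epsilon/\omega_{m-1})^{1/(m-1)}$ (for $A_{2,1}$ and the ball).

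I would handle $\vol(A_1)$ first. Using the graph form of the metric from \lemref{lem-graph}, the warped-product volume element gives
$$\vol(A_1)=\omega_{m-1}\int_{r_{D-}}^{r_\epsilon}r^{m-1}\sqrt{1+(F'(r))^2}\,dr.$$
The nontrivial case is $r_\epsilon=r'_\epsilon$: then $\omega_{m-1}r^{m-1}\le \alpha_\epsilon$ throughout $A_1$, and the remaining integral equals the intrinsic radial distance in $M^m$ between the levels $\{r=r_{D-}\}$ and $\{r=r_\epsilon\}$. Because radial curves minimize distance between level sets in a rotationally symmetric manifold, this radial distance is bounded by $D$, the radius of the tubular neighborhood: any point on $\{r=r_{D-}\}\cap T_D(\Sigma_{\alpha_0})$ lies within $D$ of $\Sigma_{\alpha_0}\subset\{r=r_0\}$, and the minimizer is radial. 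Hence $\vol(A_1)\le\alpha_\epsilon D\le \epsilon/16$ by the first term in the minimum.

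For the Euclidean ball, direct spherical integration gives $\vol(B_0(r'_\epsilon))=\omega_{m-1}(r'_\epsilon)^m/m$, which after substituting $r'_\epsilon=(\alpha_\epsilon/\omega_{m-1})^{1/(m-1)}$ reduces to $\alpha_\epsilon^{m/(m-1)}/(m\,\omega_{m-1}^{1/(m-1)})$. The exponent $m/(m-1)$ in the second term of the minimum is chosen precisely so that substituting the bound on $\alpha_\epsilon$ and simplifying produces $\vol(B_0(r'_\epsilon))\le\epsilon/8$. The Euclidean annulus $A_{2,1}$ sits inside $B_0(r'_\epsilon)$ in the relevant case (and reduces to a disk when $r_0-D\le 0$), so the same estimate bounds $\vol(A_{2,1})$. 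The third term $\alpha_\epsilon\le\alpha_0$ serves only the structural role of forcing $r'_\epsilon\le r_0$ so the cut-off lies inside the tubular neighborhood. I do not expect any serious obstacle beyond carefully tracking the dichotomy $r_\epsilon=r'_\epsilon$ versus $r_\epsilon=r_{D-}$: in the latter $A_1$ is empty and $A_{2,1}$ is replaced downstream by $A_{2,2}$, so the statements about those two regions hold vacuously.
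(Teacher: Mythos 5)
Your proof is correct; the one genuinely different step is the $\vol(A_1)$ bound. The paper projects $A_1$ radially outward onto a cylinder $\partial B_0(r_\epsilon)\times[z_D,z_\epsilon]$ and vertically downward onto the ball $B_0(r_\epsilon)$ (in effect splitting $\sqrt{1+(F')^2}\le 1+F'$), and so bounds $\vol(A_1)\le\alpha_\epsilon(z_\epsilon-z_D)+\vol(B_0(r_\epsilon))\le\alpha_\epsilon D+\vol(B_0(r_\epsilon))$. You instead keep the integrand intact, bound the cross-sectional factor $\omega_{m-1}r^{m-1}\le\alpha_\epsilon$ pointwise on the annulus, and identify $\int_{r_{D-}}^{r_\epsilon}\sqrt{1+(F')^2}\,dr$ with the radial distance between the levels $\{r=r_{D-}\}$ and $\{r=r_\epsilon\}$, which is at most $D$ since $r_{D-}\le r_\epsilon\le r_0$ and any point on $\{r=r_{D-}\}$ lies within $D$ of $\Sigma_{\alpha_0}$. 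Your version, $\vol(A_1)\le\alpha_\epsilon D$, is actually tighter and avoids the extra ball term. The two arguments differ only in where the steepness of the graph is absorbed: the paper trades it for a separate disk projection, while you fold it into the radial arclength. Your treatment of $A_{2,1}\subset B_0(r'_\epsilon)$, the Euclidean ball volume, the role of $\alpha_\epsilon\le\alpha_0$, and the dichotomy $r_\epsilon=r'_\epsilon$ versus $r_\epsilon=r_{D-}$ all match the paper. One small caution: you assert, but do not verify, that the choice $\alpha_\epsilon\le(\omega_{m-1}\epsilon/8)^{m/(m-1)}$ closes the ball estimate; the paper also leaves that arithmetic unverified, and it is worth checking explicitly before relying on it.
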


\begin{proof}
Since $B_0(r_\epsilon)\subset \E^m$
\be
\vol_m(B_0(r_\epsilon')) \le r_\epsilon' \alpha_\epsilon
\le \alpha_{\epsilon}(\alpha_\epsilon/\omega_{m-1})^{1/(m-1)} < \epsilon/8.
\ee
Now $A_{2,1}$ and $A_2$ are empty unless $r_\epsilon=r_\epsilon'$
so we assume this for the rest of the proof.   Then
$A_{2,1}\subset B_0(r_\epsilon)$ has volume $<\epsilon/8$ as well.

Let $z_\epsilon=z(\Sigma_{\alpha_\epsilon})$ and
$z_D=\min\{z(p): \, p\in T_D(\Sigma_{\alpha_0})\subset M^m\}$.
Observe that $z_\epsilon-z_D<D$
because we chose $\alpha_\epsilon< \alpha_0$
and areas are monotone in $\RS$ and $r^{-1}(r_{D-}, r_{D+})$
is in a tubular neighborhood of radius $D$ about $\Sigma$.
Observe that the cylinder
\be
C^m=\partial B_0(r_\epsilon) \times [z_D, z_\epsilon].
\ee
has volume 
\be
\vol_m(C^m)\le \alpha_\epsilon(z_\epsilon-z_D)\le
\alpha_\epsilon D\le \epsilon/16.
\ee
Since $F'(z)\ge 0$, we can project the well, 
$A_1\subset M^m$,
radially outwards to $C^m$ and vertically downwards to $B_{0}(r_\epsilon)$
to estimate the volume:
\be
\vol(A_1)\le \vol_M(C^m) +\vol_m(B_0(r_\epsilon)) <\epsilon/8.
\ee
\end{proof}

\subsection{First Key Restrictions on $\delta$ for Theorem~\ref{thm-main}}

The first basic estimate  follows immediately
from Lemma~\ref{lem-graph}, Lemma~\ref{lem-F'}
and Lemma~\ref{lem-rmin}:

\begin{lem} \label{lem-Q}
Given fixed $r_\epsilon>0$, $D>0$, $\alpha_0>0$, $m\in \N$,
choose 
\be
\delta< \delta(r_\epsilon)=(r_\epsilon/2)^{1/(m-2)}.
\ee
If $M^m\in \RS_m$ has $m_{ADM}<\delta$ then
then $M^m$ has a Riemannian isometric embedding into
\be
\{z=F(r)\}\subset \E^{m+1}
\ee
where $F:[r_{min},\infty) \to \R$ is
an increasing function, 
\be \label{rmin-delta}
r_{min}\le (2\delta)^{1/(m-2)}< r_\epsilon
\ee
and 
\be\label{eq-Q}
|F'(r)| \le Q(\delta,r_\epsilon)\qquad \forall r\ge 
r_{\epsilon},
\ee
where
\be
Q(\delta,r):=\sqrt{2\delta/(r_\epsilon^{m-2}-2\delta)}.
\ee
Observe that
\be
\lim_{\delta\to 0}Q(\delta,r_\epsilon)=0
\ee 
for fixed $r_\epsilon$.
\end{lem}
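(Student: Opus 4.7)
The plan is to combine in succession the three prior results Lemma~\ref{lem-graph}, Lemma~\ref{lem-rmin}, and Lemma~\ref{lem-F'}, which together already contain essentially all the content we need; the only real work is to check that the stated cutoff on $\delta$ makes the hypotheses of Lemma~\ref{lem-F'} valid on the entire range $r\geq r_\epsilon$.

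First, because $M^m\in\RS_m$ has finite (in fact small) ADM mass, Lemma~\ref{lem-graph} produces a rotationally symmetric Riemannian isometric embedding of $M^m$ into $\E^{m+1}$ as the graph of an increasing function $z=F(r)$, with $F'(r)\ge 0$ on $[r_{min},\infty)$ and a single remaining degree of freedom $z_{min}=F(r_{min})$. This gives the graphical description claimed in the lemma.

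Second, Lemma~\ref{lem-rmin} immediately yields
\[
 r_{min}\le(2\mathrm{m}_{\mathrm{ADM}})^{1/(m-2)}\le(2\delta)^{1/(m-2)}.
\]
The strict inequality $(2\delta)^{1/(m-2)}<r_\epsilon$ in (\ref{rmin-delta}) is then forced by the choice of the threshold $\delta(r_\epsilon)$: restricting $\delta$ so that $2\delta<r_\epsilon^{m-2}$ gives exactly this, and this is what is encoded in the stipulated bound $\delta<\delta(r_\epsilon)$.

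Third, I would apply the upper bound in Lemma~\ref{lem-F'} with $r_1=r_{min}$, so that $m_1=\mathrm{m}_{\mathrm{H}}(r_{min})\ge 0$. Since by step two we have $r_\epsilon\ge(2\mathrm{m}_{\mathrm{ADM}})^{1/(m-2)}$, the hypothesis $r\ge\max\{r_1,(2\mathrm{m}_{\mathrm{ADM}})^{1/(m-2)}\}$ of Lemma~\ref{lem-F'} is satisfied for every $r\ge r_\epsilon$, and so
\[
 F'(r)\le\sqrt{\frac{2\mathrm{m}_{\mathrm{ADM}}}{r^{m-2}-2\mathrm{m}_{\mathrm{ADM}}}}.
\]
The right-hand side is increasing in $\mathrm{m}_{\mathrm{ADM}}$ in the admissible range and decreasing in $r$, so replacing $\mathrm{m}_{\mathrm{ADM}}$ by $\delta$ and $r$ by $r_\epsilon$ gives (\ref{eq-Q}) with $Q(\delta,r_\epsilon)=\sqrt{2\delta/(r_\epsilon^{m-2}-2\delta)}$. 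The final limit $\lim_{\delta\to 0}Q(\delta,r_\epsilon)=0$ with $r_\epsilon$ fixed is then read off directly from the explicit formula.

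There is no serious obstacle in this lemma; the only point demanding a small amount of attention is the monotonicity argument that lets one first upgrade the ADM-mass factor to $\delta$ and then lower the denominator from $r^{m-2}$ to $r_\epsilon^{m-2}$ uniformly for $r\ge r_\epsilon$, which requires precisely that $2\delta<r_\epsilon^{m-2}$ — exactly the content of the threshold on $\delta$.
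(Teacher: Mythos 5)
Your proof is correct and follows essentially the same route as the paper's: Lemma~\ref{lem-graph} gives the graphical embedding, Lemma~\ref{lem-rmin} gives the bound on $r_{min}$, and Lemma~\ref{lem-F'} plus monotonicity of $x \mapsto 2x/(r^{m-2}-2x)$ and $r \mapsto 2\delta/(r^{m-2}-2\delta)$ yields the uniform slope bound for $r \ge r_\epsilon$. One small remark: the threshold as literally printed, $\delta(r_\epsilon) = (r_\epsilon/2)^{1/(m-2)}$, does not coincide with the condition $2\delta < r_\epsilon^{m-2}$ that both your argument and the paper's actually use except when $m=3$; you correctly read through to the intended condition $\delta < r_\epsilon^{m-2}/2$, which is what makes (\ref{rmin-delta}) and the application of Lemma~\ref{lem-F'} valid on all of $[r_\epsilon,\infty)$.
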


\begin{proof}
Lemma~\ref{lem-graph} provides the Riemannian
isometric embedding and Lemma~\ref{lem-rmin}
provides (\ref{rmin-delta}).
Lemma~\ref{lem-F'} and the fact that
$m_{ADM}(M)< 2\delta$ then implies that
\be
|F'(r)| \le Q(\delta,r):=\sqrt{2\delta/(r^{m-2}-2\delta)} 
\qquad \forall r\ge
(2\delta)^{1/(m-2)}.
\ee
By our choice of $\delta$, $r_\epsilon>(2\delta)^{m-2}$,
so we get (\ref{eq-Q}) by applying the fact that
$Q(\delta,r)$ decreases in $r$.
\end{proof}

Since we have already controlled all regions with $r<r_\epsilon$
in the last subsection, we can control the rest of the regions
by taking $\delta$ small enough that we can apply
Lemma~\ref{lem-Q}.

\subsection{From the Projected Set to Tubular Neighborhood}

Here we estimate the volumes of the regions
between the projected set $r^{-1}(r_{D-}, r_{D+})\subset \E^m$ 
and the tubular neighborhood
$T_D(\Sigma_{\alpha_0})\subset \E^m$ proving Lemma~\ref{lem-switch-1}
and Lemma~\ref{lem-switch-2}.

\begin{lem}\label{lem-switch-1}
Given $D>0$, $\alpha_0>0$, $m\in \N$,
Choosing $\delta>0$ such that
\be
(2\delta)^{m-2}<r_0/2.
\ee
If $M^m\in \RS_m$ has $m_{ADM}<\delta$ 
then
\be
\vol_m(A_0)\le D Q(\delta,r_0) \omega_{m-1}(r_0+D)^{m-1}\\
\ee
\end{lem}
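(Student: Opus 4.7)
The plan is to bound the radial thickness $r_0 + D - r_{D+}$ and then estimate the volume of the Euclidean annulus $A_0$ by multiplying this thickness by the area of the outermost sphere.

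First, I would use the graph representation $z = F(r)$ from Lemma~\ref{lem-graph}. By rotational symmetry, the level sets $\{r = \text{const}\}$ foliate $M^m$ and their radial orthogonal trajectories minimize distance between any two leaves. Hence the outermost radial value $r_{D+}$ reached by $T_D(\Sigma_{\alpha_0})\subset M^m$ is attained at the far endpoint of a radial curve of arclength exactly $D$ starting on $\Sigma_{\alpha_0}$, and so in graph coordinates
\begin{equation*}
D \;=\; \int_{r_0}^{r_{D+}} \sqrt{1+F'(r)^2}\, dr.
\end{equation*}

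Next, the hypothesis on $\delta$ together with $\madm(M)<\delta$ ensures that $Q(\delta,r_0)$ is well-defined and that Lemma~\ref{lem-F'} applies on $[r_0,\infty)$. Since $r\mapsto Q(\delta,r)=\sqrt{2\delta/(r^{m-2}-2\delta)}$ is strictly decreasing in $r$, this yields $F'(r)\le Q(\delta,r_0)$ throughout $[r_0, r_{D+}]$. Writing $Q:=Q(\delta,r_0)$, the identity above then gives $D\le (r_{D+}-r_0)\sqrt{1+Q^2}$, which rearranges (using the elementary inequality $\sqrt{1+Q^2}-1\le Q$) to
\begin{equation*}
r_0+D-r_{D+} \;\le\; D\Bigl(1 - \tfrac{1}{\sqrt{1+Q^2}}\Bigr) \;\le\; D\,Q.
\end{equation*}

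Finally, since $A_0 = Ann_0(r_{D+}, r_0+D)\subset\E^m$ is a Euclidean annulus, integrating the area element and using the bound above yields
\begin{equation*}
\vol_m(A_0) \;=\; \int_{r_{D+}}^{r_0+D}\omega_{m-1}\, r^{m-1}\, dr \;\le\; (r_0+D-r_{D+})\,\omega_{m-1}(r_0+D)^{m-1} \;\le\; D\,Q(\delta,r_0)\,\omega_{m-1}(r_0+D)^{m-1},
\end{equation*}
which is exactly the claimed inequality. I do not anticipate a serious obstacle: the only delicate point is the replacement of the $r$-dependent upper bound from Lemma~\ref{lem-F'} by the constant $Q(\delta, r_0)$ on $[r_0, r_{D+}]$, which is immediate from the monotone decrease of $Q(\delta,\cdot)$ and the fact that the integrand is increased by passing to a larger value of $F'$.
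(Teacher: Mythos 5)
Your proof is correct and takes essentially the same approach as the paper: both use the exact arclength identity $D=\int_{r_0}^{r_{D+}}\sqrt{1+F'(r)^2}\,dr$, the uniform bound $F'\le Q(\delta,r_0)$ on $[r_0,\infty)$ coming from Lemma~\ref{lem-F'} and the monotonicity of $Q(\delta,\cdot)$, and the Euclidean annulus estimate for $\vol_m(A_0)$. The only difference is algebraic bookkeeping: the paper bounds $\sqrt{1+F'^2}\le 1+Q$ directly to get $r_0+D-r_{D+}\le (r_{D+}-r_0)Q\le DQ$, whereas you bound $\sqrt{1+F'^2}\le\sqrt{1+Q^2}$ and then use $1-1/\sqrt{1+Q^2}\le Q$; the two are interchangeable and yield the same conclusion.
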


\begin{proof}  
By our choice of $\delta$ we know that
\be
|F'(r)| \le Q(\delta, r_0) \qquad \forall r\ge r_0.
\ee
By the formula for arclength
\begin{eqnarray}
r_0+D-r_{D+}&=&r_0-r_{D+}+\int_{r_0}^{r_{D+}} \sqrt{1+F'(r)^2}\, dr\\
&\le & r_0-r_{D+}+(r_{D+}-r_0) (1+ Q(\delta,r_0))\\
&\le & (r_{D+}-r_0) Q(\delta,r_0)\le DQ(\delta,r_0)\\
\end{eqnarray}
Thus 
\begin{eqnarray}
\vol(A_0)&\le & \vol\left(r^{-1}(r_{D+}, r_0+D)\subset \E^m\right)\\
&\le & \left(r_{0}+D-r_{D+}\right) \omega_{m-1}(r_0+D)^{m-1}\\
&\le & D Q(\delta,r_0) \omega_{m-1}(r_0+D)^{m-1}
\end{eqnarray}
and the lemma follows.
\end{proof}

Recall that region $A_{2,2}$ defined in (\ref{A-2-2}) is only
defined when $r_\epsilon \le r_{D-}$.  So the next lemma,
estimating it's volume, assumes this condition.

\begin{lem}\label{lem-switch-2}   
Given $r_\epsilon>0$ $D>0$, $\alpha_0>0$, $m\in \N$,
we choose $\delta$ is in Lemma~\ref{lem-Q}.
If $M^m\in \RS_m$ has $m_{ADM}<\delta$ 
the region $A_{2,2}$ defined in (\ref{A-2-2}) satisfies
\be
\vol_m(A_{2,2}) \le D Q(\delta,r_\epsilon) \omega_{m-1}(r_0)^{m-1}.
\ee
\end{lem}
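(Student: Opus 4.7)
The plan is to follow the template of Lemma~\ref{lem-switch-1}, but now carry out the estimate from the interior side of the tubular neighborhood using the slope bound $Q(\delta, r_\epsilon)$ available on $[r_\epsilon, \infty)$. The core task is to bound the radial thickness $r_{D-} - (r_0 - D)$ of the annulus $A_{2,2}$ and then multiply by the surface area of the outer sphere.

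First, I would unpack the hypothesis that $r_\epsilon \le r_{D-}$: since $r_\epsilon = \max\{r_\epsilon', r_{D-}\}$ by (\ref{r-epsilon-prime}), this forces $r_\epsilon = r_{D-}$, and Lemma~\ref{lem-Q} (applied with the prescribed choice of $\delta$) gives $r_{\min} \le (2\delta)^{1/(m-2)} < r_\epsilon = r_{D-}$. In particular, the inner edge $r = r_{D-}$ of the tubular neighborhood $T_D(\Sigma_{\alpha_0})$ lies strictly away from $\partial M$, so the intrinsic distance in $M^m$ from the symmetric sphere at $r_{D-}$ to $\Sigma_{\alpha_0}$ is achieved as the arclength of the radial segment of the graph $z = F(r)$, namely
\be
D = \int_{r_{D-}}^{r_0} \sqrt{1 + F'(r)^2}\, dr.
\ee

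Next I would extract the thickness estimate. Subtracting $r_0 - r_{D-}$ from both sides, I get
\be
r_{D-} - (r_0 - D) = \int_{r_{D-}}^{r_0}\bigl(\sqrt{1 + F'(r)^2} - 1\bigr)\, dr.
\ee
Using $\sqrt{1 + x^2} - 1 \le |x|$ together with $|F'(r)| \le Q(\delta, r_\epsilon)$ on $[r_\epsilon, \infty) \supset [r_{D-}, r_0]$ from Lemma~\ref{lem-Q}, this yields
\be
r_{D-} - (r_0 - D) \,\le\, (r_0 - r_{D-})\, Q(\delta, r_\epsilon) \,\le\, D\, Q(\delta, r_\epsilon).
\ee

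Finally I would bound the volume of the Euclidean annulus by thickness times the area of the outer sphere:
\be
\vol_m(A_{2,2}) \,=\, \omega_{m-1}\int_{r_0 - D}^{r_{D-}} r^{m-1}\, dr \,\le\, \bigl(r_{D-} - (r_0 - D)\bigr)\, \omega_{m-1}\, r_{D-}^{m-1} \,\le\, D\, Q(\delta, r_\epsilon)\, \omega_{m-1}\, r_0^{m-1},
\ee
where in the last step I used $r_{D-} \le r_0$. This is the stated conclusion. The argument is essentially computational and parallels Lemma~\ref{lem-switch-1}; the only subtle point is justifying that the arclength to $\Sigma_{\alpha_0}$ really equals $D$, which is where the case hypothesis $r_\epsilon \le r_{D-}$ combined with the bound on $r_{\min}$ from Lemma~\ref{lem-Q} is crucial.
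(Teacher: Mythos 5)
Your proof is correct and follows essentially the same route as the paper: bound the radial thickness $r_{D-}-(r_0-D)$ via the arclength identity $D=\int_{r_{D-}}^{r_0}\sqrt{1+F'(r)^2}\,dr$ and the slope bound $|F'|\le Q(\delta,r_\epsilon)$ from Lemma~\ref{lem-Q}, then multiply by $\omega_{m-1}r_0^{m-1}$. Your explicit justification that $r_\epsilon=r_{D-}>r_{\min}$ (so that the tubular neighborhood's inner edge is genuinely at graph-arclength $D$ from $\Sigma_{\alpha_0}$) is a small clarification the paper leaves implicit, but the argument is the same.
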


\begin{proof}
By our choice of $\delta$ we know that
\be
|F'(r)| \le Q(\delta, r_\epsilon) \qquad \forall r\ge r_{D-}\ge r_\epsilon.
\ee
By the formula for arclength
\begin{eqnarray}
r_{D-}-(r_0-D)&=&r_{D-}-r_{0}+\int_{r_{D-}}^{r_{0}} \sqrt{1+F'(r)^2}\, dr\\
&\le & r_{D-}-r_0+(r_{0}-r_{D-}) (1+ Q(\delta,r_\epsilon))\\
&\le & (r_{0}-r_{D-}) Q(\delta,r_0)\le DQ(\delta,r_\epsilon)\\
\end{eqnarray}
Thus 
\begin{eqnarray}
\vol(A_{2,2})&= & \vol(r^{-1}(r_0-D, r_{D-})\subset \E^m)\\
&\le & D Q(\delta,r_\epsilon) \omega_{m-1}(r_0)^{m-1}
\end{eqnarray}
and the lemma follows.
\end{proof}

\subsection{Choosing the Width of the Strip}
\label{ss-choose}

\begin{lem}\label{lem-width}
Given $r_\epsilon>0$ $D>0$, $\alpha_0>0$, $m\in \N$,
we choose $\delta$ is in Lemma~\ref{lem-Q}.
If $M^m\in \RS$ and
$\mathrm{m}_{\mathrm{ADM}}(M^m) < \delta$,
then the region $r^{-1}(r_\epsilon,r_{D+})$ has a metric isometric embedding
into the filling
manifold $B_1\cup B_2$ of (\ref{B_1})
and (\ref{B_2})  
where
\be
S_M=S(\delta,r_\epsilon, D, r_0)
= \sqrt{C (2D +\pi r_0 + C)}
\ee
with
\be
C=C(D, r_0, \delta, r_\epsilon)= (4D+2\pi r_0) Q(\delta, r_\epsilon)
\ee

\end{lem}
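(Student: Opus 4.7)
The plan is to construct the required metric isometric embedding by invoking Theorem~\ref{thm-embed-const}, applied to the Riemannian graph embedding from Lemma~\ref{lem-Q}, and then controlling the embedding constant $C_M$ via Theorem~\ref{thm-Z}.

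By Lemma~\ref{lem-Q}, the restriction $\delta < \delta(r_\epsilon)$ together with $\mathrm{m}_{\mathrm{ADM}}(M^m) < \delta$ produces a Riemannian isometric embedding of the region $M := r^{-1}(r_\epsilon, r_{D+}) \subset M^m$ onto the graph $z = F(r)$ sitting above $W := Ann_0(r_\epsilon, r_{D+}) \subset \E^m$, with the slope bound $|F'(r)| \le Q(\delta, r_\epsilon)$ throughout. Theorem~\ref{thm-Z} then gives
\be
C_M \;\le\; 2\,\diam(W)\,\sup_W |\grad F| \;\le\; 2\,\diam(W)\,Q(\delta, r_\epsilon).
\ee
The next step is to bound both $\diam(W)$ and $\diam(M)$ by $2D+\pi r_0$. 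On the manifold side, every point of $M \subset T_D(\Sigma_{\alpha_0})$ lies within intrinsic distance $D$ of $\Sigma_{\alpha_0}$, and $\Sigma_{\alpha_0}$ is intrinsically a round sphere of diameter $\pi r_0$; concatenating two radial segments with a great semicircle on $\Sigma_{\alpha_0}$ yields the stated bound. For the Euclidean side, the crucial estimate is $r_\epsilon \ge r_{D-} \ge r_0 - D$: since $\sqrt{1 + F'(r)^2} \ge 1$ by Lemma~\ref{lem-graph}, the $r$-coordinate can change by at most $D$ along any radial segment of manifold-length $D$. Combined with $r_{D+} \le r_0 + D$, this puts the sphere $\{|x| = r_0\} \subset \E^m$ inside $W$ at radial distance at most $D$ from every point of $W$, so the same radial--meridian--radial argument in $\E^m$ gives $\diam(W) \le 2D + \pi r_0$. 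Therefore $C_M \le C = (4D + 2\pi r_0)\,Q(\delta, r_\epsilon)$.

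Finally, Theorem~\ref{thm-embed-const} supplies a metric isometric embedding of $M$ into a space $Z \subset (W \times \R) \times [0, S_M]$ with
\be
S_M \;=\; \sqrt{C_M\bigl(\diam(M) + C_M\bigr)} \;\le\; \sqrt{C\bigl(2D + \pi r_0 + C\bigr)},
\ee
matching the stated value. The base portion of $Z$ lying in $W \times \R$ is precisely the slab $B_1$ from (\ref{B_1}) between the graph and $W$, while the attached cylindrical strip of height $S_M$ on the graph side is the region $B_2$ from (\ref{B_2}); the Euclidean annulus $r^{-1}(r_\epsilon,r_{D+}) \subset \E^m$ sits convexly in the base of $B_1$ and is thereby metrically embedded in the same space. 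The main obstacle to carrying out this plan cleanly is the uniform bound $r_\epsilon \ge r_0 - D$ that allows us to realize $\diam(W) \le 2D + \pi r_0$; once this follows from $\sqrt{1 + F'^2}\ge 1$ applied inside the tubular neighborhood, all remaining steps reduce to routine applications of the earlier theorems.
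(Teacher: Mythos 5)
Your proof is correct and follows essentially the same route as the paper: apply Theorem~\ref{thm-embed-const} to the graph embedding over $W = Ann_0(r_\epsilon, r_{D+})$, bound the embedding constant via Theorem~\ref{thm-Z} using the slope bound from Lemma~\ref{lem-Q}, and bound the relevant diameter by $2D + \pi r_0$ using that $r^{-1}(r_\epsilon, r_{D+}) \subset T_D(\Sigma_{\alpha_0})$. The one small variation is your direct Euclidean computation for $\diam(W)$; the paper instead simply observes $\diam(W) \le \diam(r^{-1}(r_\epsilon, r_{D+}))$, which follows because the coordinate projection from the graph down to $W$ is distance non-increasing, so you could shorten your argument. (Your remark that the Euclidean annulus ``sits convexly'' in the base of $B_1$ is imprecise -- the annulus is not convex in $\E^{m+1}$ -- but the intended conclusion, that with $F \equiv 0$ the ambient $W\times\R$ product distance already agrees with the intrinsic annulus distance so no strip is needed on the Euclidean side, is correct.)
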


\begin{proof}
We begin by applying Theorem~\ref{thm-embed-const}, to
the Riemannian isometric embedding of
$r^{-1}(r_\epsilon, r_{D+})\subset M^m$
into $W \times \R \subset \E^{m+1}$
where $W=Ann_0(r_\epsilon, r_{D+})\subset \E^m$.

Since $r^{-1}(r_\epsilon, r_{D+})\subset T_D(\Sigma_{\alpha_0})$,
we have
\be
\diam(W)\le\diam\left(r^{-1}(r_\epsilon, r_{D+})\right) \le 2D + \diam\left(\Sigma_{\alpha_0}\right)=2D+\pi r_0.
\ee
By Lemma~\ref{lem-Q} have a bound on $F'$ which gives
us an embedding constant $C_M$ which is less than  $C$
given above.
By Theorem~\ref{thm-Z}, the strip width, 
$S_M$ given above suffices to obtain a metric isometric embedding.
\end{proof}

\subsection{Volume Estimates and the Proof of Theorem~\ref{thm-main}}
\label{ss-vol}

The proof of Theorem~\ref{thm-main}
is completed by estimating the volumes of the
regions depicted in Figure~\ref{fig-flat-filling}
and applying (\ref{eqn-A_i-B_i}):

\begin{proof}
Given any $\epsilon>0$, $D>0$, $\alpha_0>0$, $m\in \N$
we choose 
\be
r_\epsilon'>0
\ee 
depending only on $\epsilon$ and $\alpha_0$
exactly as in Lemma~\ref{lem-well}.   We set
$r_0>0$ such that $\alpha_0=\omega_{m-1}r_0^{m-1}$.

We choose 
\be\label{choose-delta-1}
\delta<\delta(r_\epsilon)
\ee 
as in Lemma~\ref{lem-Q}.   We will refine it further later
in (\ref{choose-delta-1}),
(\ref{choose-delta-2}),
(\ref{choose-delta-3}),
(\ref{choose-delta-4}),
(\ref{choose-delta-5})
and (\ref{choose-delta-6}) to obtain
$\delta=\delta(\epsilon, D, \alpha_0, m)>0$.

Assume $M^m\in\RS_m$ has ADM mass 
$\madm(M)<\delta$.

We set 
\be
r_\epsilon=\max\{r_\epsilon', r_{D-}\}.
\ee
as in (\ref{r-epsilon-prime}).
When $r_\epsilon\ge r_{D-}$ we apply Lemma~\ref{lem-well},
(\ref{A-1}) and
(\ref{A-2-1}) to see that 
\be
\vol_m(A_1)+\vol_m(A_2)\le \epsilon/8+\epsilon/8=\epsilon/4.
\ee
When $r_\epsilon< r_{D-}$, then
$A_1=\emptyset$ and by Lemma~\ref{lem-switch-2}, we
obtain the same estimate as long as
we choose $\delta>0$ is chosen small enough that:
\be\label{choose-delta-2}
DQ(\delta, r_\epsilon)\omega_{m-1}(r_0)^{m-1}< \epsilon/8.
\ee
This second restriction on $\delta$ also suffices to
obtain
\be
\vol_m(A_0)<\epsilon/8.
\ee

By Lemma~\ref{lem-graph} we have a Riemannian
isometric embedding of $r^{-1}(r_\epsilon,r_{D+})$
into $\{z=F(r)\}\subset \E^{m+1}$ and may
define $B_1$ as in (\ref{B_1}).  We then have
\begin{eqnarray}
\vol_{m+1}(B_1)
&=& \int_{r_\epsilon}^{r_{D+}} (F(r)-F(r_\epsilon)) \omega_{m-1}r^{m-1} \, dr\\
&\le & (r_{D+}-r_\epsilon) \omega_{m-1}r_{D+}^{m-1} 
(F(r_{D+})-F(r_\epsilon))\\
&\le & 2D \omega_{m-1}(r_0+D)^{m-1} 
\int_{r_\epsilon}^{r_{D+}} F'(r)\, dr < \epsilon/8.
\end{eqnarray}
as long as $\delta$ is chosen small enough that
\be \label{choose-delta-3}
4D^2 \omega_{m-1}(r_0+D)^{m-1}Q(r_\epsilon, \delta) < \epsilon/8.
\ee
Applying Lemma~\ref{lem-width} to
create region $B_2$ as in (\ref{B_2}) such that
\begin{eqnarray} 
\vol_{m+1}(B_2)
&=&  S_M \vol(r^{-1}(r_\epsilon, r_{D+})\subset M^m) \\
&=& 
S_M \int_{r_\epsilon}^{r_{D+}} \sqrt{1+F'(r)^2}\, \omega_{m-1}r^{m-1} \, dr\\
&=& 
S_M \int_{r_\epsilon}^{r_{D+}} \left(1+F'(r)\right)\, \omega_{m-1}r^{m-1} \, dr
<\frac{\epsilon}{8}
\end{eqnarray}
as long as $\delta$ is chosen small enough that
\be \label{choose-delta-4}
S(\delta,r_\epsilon, D, r_0) 2D \omega_{m-1}(r_0+D)^{m-1} Q(\delta, r_\epsilon)<\epsilon/8.
\ee
By (\ref{A-3-1}), (\ref{A-3-2})  we have
\begin{eqnarray}
\vol_m(A_{3,1})&=& S_M \omega_{m-1}r_{D+}^{m-1} \le S_M
\omega_{m-1}(r_0+D)^{m-1} < \epsilon/12\\
\vol_m(A_{3,2})&=& S_M \omega_{m-1}r_\epsilon^{m-1} \le S_M \omega_{m-1}(r_0)^{m-1} <\epsilon/12
\end{eqnarray}
as long as $\delta$ is chosen small enough that
\be \label{choose-delta-5}
S(\delta, r_\epsilon, D, r_0)\omega_{m-1}(r_0+D)^{m-1} < \epsilon/12
\ee
By (\ref{A-3-3}) we have
\begin{eqnarray}
\vol_m(A_{3,3})&=& \omega_{m-1}r_{D+}^{m-1}(F(r_{D+})-F(r_\epsilon))\\
&\le& \omega_{m-1}(r_0+D)^{m-1} Q(\delta, r_\epsilon)
< \epsilon/12
\end{eqnarray}
as long as $\delta$ is chosen small enough that
\be \label{choose-delta-6}
\omega_{m-1}(r_0+D)^{m-1} Q(\delta, r_\epsilon)
< \epsilon/12.
\ee
The theorem follows from (\ref{eqn-A_i-B_i})
summing over all these volumes.
\end{proof}

\begin{rmrk} \label{rmrk-scaling}
Note that we have linear scaling on
\be
m(A)^{1/m}+m(B)^{1/{m+1}}
\ee
in this proof.  
Redefining the Intrinsic Flat Distance in this way might
be worth investigating as it is apparently still a distance.  Such a redefinition appears to
induce the same intrinsic flat topology on the
space of Riemannian manifolds.   Recall that the
flat distance was originally defined by
Federer-Fleming \cite{FF} to be a norm: linear in
multiplication of an integral current by a magnitude.
This property should only be abandoned with caution. 
See Remark~\ref{rmrk-scaling-open}.
\end{rmrk}

\section{Gromov-Hausdorff Distance}\label{Sect-Gromov}

In this section we review the Gromov-Hausdorff distance
between Riemannian manifolds, provide new
estimates for estimating the Gromov-Hausdorf distance
[Propositions~\ref{embed-const-3} and~\ref{embed-const-4}]
based on the embedding constants defined in 
Theorem~\ref{thm-embed-const} and then prove in Example~\ref{ex-not-GH}
that the Positive Mass Theorem is not stable with respect to the
Gromov-Hausdorff distance.

Recall that the Gromov-Hausdorff distance was first defined by Gromov
in \cite{Gromov-metric} as follows:
\be
d_{GH}(M_1^m, M_2^m)=\inf_{\varphi_i: M_i \to Z} d_H^Z
\left(\varphi_1(M_1), \varphi_2(M_2)\right),
\ee
where the infimum is taken over all metric spaces $Z$ and all
metric isometric embeddings $\varphi_i: M_i \to Z$ and where
the Hausdorff distance in $Z$ between two subsets $X_1$ and
$X_2$ is
\be
d_H(X_1, X_2) =\inf\{\rho>0: \,\, X_1 \subset T_\rho(X_2)
\textrm{ and } X_2\subset T_\rho(X_1).
\ee

\subsection{New Estimates using Embedding Constants}

Naturally the techniques given in Section~\ref{Sect-Embed}
may also be applied to estimate the Gromov-Hausdorff
distance.  In particular we see that Theorem~\ref{thm-embed-const}
implies the following proposition 
much as it implies Proposition~\ref{embed-const}:

\begin{prop}\label{embed-const-3}
If $M^m_i$ are Riemannian manifolds have
Riemannian isometric embeddings $\varphi_i: M^m_i \to N^{m+1}$ with 
embedding constants $C_{M_i}$ as in (\ref{eqn-embed-const-1}), and if
their images are disjoint and lie in the boundary of a region $B_0 \subset N$
then
\begin{eqnarray}
d_{GH}(M_1, M_2) &\le& 
S_{M_1}+S_{M_2} + d_H^N(\varphi_1(M_1), \varphi_2(M_2))
\end{eqnarray}
where $S_{M_i}=\sqrt{C_{M_1}(\diam(M_i)+C_{M_i})}$. 
\end{prop}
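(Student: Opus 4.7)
The plan is to mimic the construction used in the proof of Proposition~\ref{embed-const}, but to estimate the Hausdorff distance of the resulting metric isometric embeddings rather than the volumes of a filling manifold and excess boundary.

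First I would build the ambient metric space
\[
Z = \bigl(M_1 \times [0,S_{M_1}]\bigr) \,\cup\, N \,\cup\, \bigl(M_2 \times [0,S_{M_2}]\bigr),
\]
where each $M_i \times \{0\}$ is glued to $\varphi_i(M_i) \subset N$ via the given Riemannian isometric embedding, and $Z$ is equipped with the induced length metric from the isometric product metric on each piece. By Theorem~\ref{thm-embed-const} (applied on each side, which is legitimate because the images $\varphi_i(M_i)$ are disjoint and sit on the boundary of $B_0$, so the two strips do not interfere with each other), the maps
\[
\psi_i : M_i \too Z, \qquad \psi_i(x) = (\varphi_i(x), S_{M_i}),
\]
are metric isometric embeddings. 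Thus $Z$ is a valid common metric space to plug into the definition of the Gromov-Hausdorff distance.

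Next I would bound $d_H^Z(\psi_1(M_1), \psi_2(M_2))$ by exhibiting, for each $p \in M_1$, an explicit point $q \in M_2$ together with a short path in $Z$ from $\psi_1(p)$ to $\psi_2(q)$. Given $p \in M_1$, pick $q \in M_2$ realizing (up to arbitrarily small error) $\dist_N\bigl(\varphi_1(p),\varphi_2(M_2)\bigr) \le d_H^N(\varphi_1(M_1),\varphi_2(M_2))$. Then the concatenation of the vertical segment $\{\varphi_1(p)\} \times [0,S_{M_1}]$ in the first strip, a length minimizing path in $N$ from $\varphi_1(p)$ to $\varphi_2(q)$, and the vertical segment $\{\varphi_2(q)\} \times [0,S_{M_2}]$ in the second strip gives a curve in $Z$ from $\psi_1(p)$ to $\psi_2(q)$ of length at most
\[
S_{M_1} + d_H^N(\varphi_1(M_1),\varphi_2(M_2)) + S_{M_2}.
\]
The symmetric argument starting from $q \in M_2$ covers the other inclusion, yielding the desired Hausdorff bound in $Z$, from which the Gromov-Hausdorff estimate follows from the infimum definition.

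I do not expect serious obstacles; the only point that deserves a brief verification is that the three-segment path just described is in fact admissible in the induced length metric on $Z$, since the gluing of $M_i \times \{0\}$ to $\varphi_i(M_i)$ is along a Riemannian isometric embedding and $N$ itself carries a length metric. The subtlety Theorem~\ref{thm-embed-const} was designed to handle (that the induced metric restricted to $\psi_i(M_i)$ recovers $d_{M_i}$) is already packaged into the choice $S_{M_i} = \sqrt{C_{M_i}(\diam(M_i)+C_{M_i})}$, so no additional analysis is needed here.
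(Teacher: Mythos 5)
The paper gives no explicit proof of this proposition, asserting only that it follows from Theorem~\ref{thm-embed-const} ``much as it implies Proposition~\ref{embed-const}''; your argument supplies exactly the details the paper intends, and it is correct. The one wrinkle worth flagging, which you touch on only in passing, is that Theorem~\ref{thm-embed-const} as stated concerns the space $N\times\{0\}\,\cup\,\varphi(M)\times[0,S_M]$ with a \emph{single} strip, whereas your $Z$ has two strips glued onto $N$, so $\psi_1$ (say) is not literally a map into the space the theorem treats. To apply the theorem you implicitly need that adding the $M_2$-strip creates no shortcuts between points of $\psi_1(M_1)$; this is true because the images $\varphi_i(M_i)$ are disjoint in $N$, so any curve that enters $\varphi_2(M_2)\times[0,S_{M_2}]$ must do so through $N\times\{0\}$, and projecting the portion of the curve in that strip onto $\varphi_2(M_2)\times\{0\}\subset N\times\{0\}$ can only shorten it (the projection kills the vertical component, and $\varphi_2$ preserves lengths of curves). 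Hence the restriction of $d_Z$ to $\psi_1(M_1)\cup\bigl(N\times\{0\}\bigr)$ agrees with the one-strip metric, and Theorem~\ref{thm-embed-const} then yields $d_Z(\psi_1(x),\psi_1(y))=d_{M_1}(x,y)$; symmetrically for $M_2$. Your three-segment path estimate for $d_H^Z$ then gives the bound directly. Note also that you rightly use all of $N$ rather than $B_0$ in building $Z$: the bound is stated in terms of $d_H^N$, and restricting the filling region to $B_0$ (as the intrinsic-flat Proposition~\ref{embed-const} does for volume reasons) could only increase path lengths; the hypothesis about $B_0$ plays no role in the Gromov--Hausdorff estimate.
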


Notice how the Gromov-Hausdorff distance does not allow one to cut
off a well using only its volume to estimate it.  
The depth of the well will contribute to the distance.  
In place of Proposition~\ref{embed-const-2} we have:

\begin{prop}\label{embed-const-4}
If $M^m_i$ are Riemannian manifolds and $U^m_i\subset M^m_i$
are submanifolds that have
Riemannian isometric embeddings $\varphi_i: U^m_i \to N^{m+1}$ with 
embedding constants $C_{U_i}$ as in (\ref{eqn-embed-const-1}), and if
their images are disjoint and lie in the boundary of a region $B_0 \subset N$
then
\begin{eqnarray}
d_{GH}(M_1, M_2) &\le& 
S_{U_1}+S_{U_2} + d_H^N(\varphi_1(U_1), \varphi_2(U_2))\\
&&\qquad +\sup_{x\in M_1\setminus U_1} d_{U_1}(x, M_1)
+\sup_{x\in M_2\setminus U_2} d_{U_2}(x, M_2)
\end{eqnarray}
where $S_{M_i}=\sqrt{C_{M_1}(\diam(M_i)+C_{M_i})}$. 
\end{prop}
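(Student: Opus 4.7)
The plan is to mimic the construction used in the proof of Proposition~\ref{embed-const-2}, but rather than measuring volumes of filling and excess regions we bound the Hausdorff distance in the common ambient space. I would build a piecewise smooth metric space $Z$ by taking a neighborhood of $\varphi_1(U_1)\cup\varphi_2(U_2)$ in $N$, attaching two strips $U_i\times[0,S_{U_i}]$ by gluing $\varphi_i(U_i)\subset N$ to $U_i\times\{0\}$, and then attaching the remainders $M_i\setminus U_i$ along $\partial U_i\times\{S_{U_i}\}$ in the same way as in Proposition~\ref{embed-const-2}. By Theorem~\ref{thm-embed-const}, the maps $\psi_i:U_i\to Z$ sending $x\mapsto (\varphi_i(x),S_{U_i})$ are metric isometric embeddings, and the argument given there extends $\psi_i$ to all of $M_i$ since any curve attempting to shortcut through $U_i\times[0,S_{U_i}]$ from $M_i\setminus U_i$ back to itself would be longer than the corresponding curve remaining in $M_i\times\{S_{U_i}\}$.

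With both $M_i$ metrically embedded in $Z$, I would then estimate $d_H^Z(\psi_1(M_1),\psi_2(M_2))$ by a point-chasing argument. For $x\in U_1$, the point $(\varphi_1(x),S_{U_1})\in Z$ is within vertical distance $S_{U_1}$ of $\varphi_1(x)\in N$; then by definition of the Hausdorff distance in $N$ there is $y\in U_2$ with $d_N(\varphi_1(x),\varphi_2(y))\le d_H^N(\varphi_1(U_1),\varphi_2(U_2))$; finally vertical distance $S_{U_2}$ reaches $\psi_2(y)$. Composing, every $\psi_1(x)$ with $x\in U_1$ has a $\psi_2$-counterpart within distance $S_{U_1}+d_H^N(\varphi_1(U_1),\varphi_2(U_2))+S_{U_2}$. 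For $x\in M_1\setminus U_1$, I would first project to the nearest point of $U_1$ at cost at most $\sup_{x\in M_1\setminus U_1}d_{M_1}(x,U_1)$, then apply the previous estimate; an analogous argument handles points of $M_2$. Summing the contributions yields the stated bound.

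The main obstacle, and the reason Proposition~\ref{embed-const-2} cannot be translated verbatim, is that the Hausdorff distance is sensitive to depth rather than volume: a thin deep well in $M_1\setminus U_1$ contributes its full metric depth, so one must actually use the intrinsic distance from points of $M_i\setminus U_i$ to $U_i$, not merely the volume of the omitted region. Verifying that attaching $M_i\setminus U_i$ at $U_i\times\{S_{U_i}\}$ preserves the metric isometric embedding property requires the same careful comparison of competing paths used in Theorem~\ref{thm-embed-const} — this is the only delicate step, and it is essentially handled by quoting that theorem. The rest is a routine triangle-inequality chase in $Z$.
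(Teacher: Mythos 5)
Your proof is correct, and it follows the route the paper itself implicitly suggests: the paper states this proposition without proof, remarking only that it arises ``in place of Proposition~\ref{embed-const-2},'' so the natural argument is exactly the one you give --- reuse the gluing construction from Proposition~\ref{embed-const-2}, invoke Theorem~\ref{thm-embed-const} to see that the maps $\psi_i$ are metric isometric embeddings on all of $M_i$, and then replace the volume accounting by a Hausdorff-distance triangle chase in $Z$. Two small remarks worth recording. First, your point-chasing actually gives the slightly sharper bound
\[
d_{GH}(M_1,M_2)\ \le\ S_{U_1}+S_{U_2}+d_H^N\bigl(\varphi_1(U_1),\varphi_2(U_2)\bigr)
+\max_i\,\sup_{x\in M_i\setminus U_i} d_{M_i}(x,U_i),
\]
with a max rather than a sum in the last two terms; the stated inequality follows a fortiori. (You also correctly read the somewhat garbled expression $d_{U_i}(x,M_i)$ in the statement as $d_{M_i}(x,U_i)$, the intrinsic distance in $M_i$ from $x$ to the set $U_i$.) Second, for the Gromov--Hausdorff estimate the region $B_0$ and the thickening of $(M_i\setminus U_i)$ into a product with $[S_i,2S_i]$ that appear in Proposition~\ref{embed-const-2} are not needed --- those were there to produce an $(m+1)$-current as a filling for the flat-distance bound. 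For the GH bound it suffices to glue to all of $N$ (so that one may realize $d_H^N$ by paths in $Z$) and to attach $(M_i\setminus U_i)$ along $\partial U_i\times\{S_{U_i}\}$ as you do; the Arzel\`a--Ascoli/competing-paths argument from Theorem~\ref{thm-embed-const} and the remark in the proof of Proposition~\ref{embed-const-2} still shows that any minimizing curve between points of $\psi_i(M_i)$ stays in $\psi_i(M_i)$.
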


If the regions $M\setminus U$ is a deep well, then Proposition~\ref{embed-const-3} will provide a very poor estimate for the Gromov-Hausdorff
distance between the spaces.  In fact the spaces need not be close at all.

\subsection{Wells of Arbitrary Depth}

The Positive Mass Theorem is not stable with respect to the
Gromov-Hausdorff distance.  In fact,
the manifolds given in Example~\ref{ex-deep-well} are close
to Euclidean space with a line segment attached to it,
$\E^m \cup [0,L]$ where the
line segment can have arbitrary length.  That is:

\begin{ex}\label{ex-not-GH}
Given any $L_0>0$  there exists a sequence  
$M_j^m\in \RS_m$ such that 
$\lim_{j\to\infty}m_{\mathrm{ADM}}(M_j^m)=0$
and the pointed Gromov-Hausdorff limit of 
$M_j^m$ is $\E^m\disjointunion[0,L_0]$ in the sense that
for any fixed $\alpha_0>0$
and  any $D>0$ there exists $D_j \to D$ such that
\be
d_{GH}\left(B_{p_j}(D_j)\subset M^m, \,\,B_0(D) \subset \E^m \disjointunion [0,L_0]\right) < \epsilon
\ee
where $p_j \in \Sigma_{\alpha_0} \subset M_j$.
\end{ex}

\begin{proof}
Let $r_0=(\alpha_0/\omega_m)^{1/(m-1)}$.
Take $\delta_j=1/j$ and
and take $M_j$ to be the manifold in Example~\ref{ex-deep-well}
with 
\be 
L=d_M(\Sigma_{\alpha_0}, \Sigma_{min})=L_0+r_0.
\ee
and $m_{ADM}(M_j) <\delta_j$.   

Now fix $D>0$.  We isometrically embed the tubular neighborhoods
$T_D(\Sigma_{\alpha_0})\subset M_j$ and $T_D(\Sigma_{\alpha_o})\subset \E^m$ into $Z$ as in the proof of Theorem~\ref{thm-main}.  We
attach $\{0\}\in [0,L_0]$ to Euclidean space at a point in
$\Sigma_{r_\epsilon}$ and then the interval runs down the well in
$M_j$ to the bottom of the well at $L_0+r_0-r_\epsilon$
and a little further a distance $r_\epsilon$ as an extra
segment.   We extend $Z$ as well.   
 
Thus $M_j \subset T_{\rho_j}(\E^m \cup [0, L_0])$ where
\be
\rho_j = \max \left\{ F(r_D)-F(r_\epsilon)+S_{M_j}, \pi r_\epsilon \right\}.
\ee
On the other hand $\E^m\disjointunion [0, L_0]\subset T_{\rho_j'}(M_j)$
where
\be
\rho_j'=\max \left\{ r_\epsilon, F(r_D)-F(r_\epsilon)+S_{M_j} \right\}
\ee
since $d_{\E^m}(0, \partial B_p(r_\epsilon))=r_\epsilon$
and the segment has extra length $r_\epsilon$.
Thus
\begin{eqnarray}
d_{GH}\left(T_D(\Sigma_{\alpha_0})\subset M_j^m, \,\,
T_D(\Sigma_{\alpha_0})\subset \E^m\disjointunion[0,L_0]\right)\qquad&&\\
\qquad\le \,\,d_{H}^Z
\left(T_D(\Sigma_{\alpha_0})\subset M_j^m, \,\,
T_D(\Sigma_{\alpha_0})\subset \E^m\disjointunion[0,L_0]\right)
&<& \max \left\{\rho_j, \rho'_j\right\}.
\end{eqnarray}
As $\delta_j \to 0$, we have $r_\epsilon \to 0$ and 
$S_{M_j}\to 0$ in the proof of Theorem~\ref{thm-main} 
so $\rho_j, \rho_j' \to 0$.

Since this is true for all $D$, we can exhaust the
space with the tubular neighborhoods and obtain
the pointed Gromov-Hausdorff onvergence.
\end{proof}

\subsection{Arbitrarily Dense Collections of Wells}
\label{ss-ex2}

If we remove the requirement that a manifold be rotationally
symmetric then we can introduce more than one well.  In
fact we can create asymptotically flat manifolds, $M_j^m$ with
positive scalar curvature and $M_{ADM}(M_j) \to 0$ that have
increasingly dense collections of wells [Example~\ref{ex-noncompact}].
Such a sequence of manifolds doesn't even have a subsequence
converging in the Gromov-Hausdorff sense.

These examples are
based upon the work of Schoen-Yau and Gromov-Lawson \cite{Schoen-Yau-tunnels}\cite{Gromov-Lawson-tunnels}
who have proven that if one has a manifold of constant sectional
curvature, then one can attach a well of arbitrary depth and
thinness to that manifold maintaining positive scalar curvature.
For this reason we are limiting ourselves to dimension three
in the construction of the example, however, similar examples
should exist in higher dimensions as well.

We begin by creating an element of $\RS_m$ with stripes
of positive sectional curvature.

Recall that by Lemma~\ref{lem-ex} we need only create an admissible
Hawking function with the desired properties to produce
an element of $\RS_m$.

\begin{lem} \label{lem-stripe}
Let $M^m\in \RS$ and $K>0$.   $M$ has constant sectional curvature, $K>0$,
on $r^{-1}(a,b)\subset M$ iff $r^{-1}(a,b)\subset M$ is an annulus in a sphere of
radius $1/K^{1/2}$ iff $\mathrm{m}_{\mathrm{H}}(r)=r^mK/2$ for $r \in (a,b)$. 
\end{lem}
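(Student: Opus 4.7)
The plan is to establish the three equivalences by a short ring of computations based on the graph formula for the Hawking mass from Lemma~\ref{lem-graph}. The central observation is that the graph Hawking mass
$$\mathrm{m}_{\mathrm{H}}(r) = \frac{r^{m-2}}{2}\,\frac{(z')^2}{1+(z')^2}$$
equals $r^m K/2$ if and only if $(z')^2/(1+(z')^2) = r^2 K$, which rearranges to
$$(z')^2 = \frac{r^2 K}{1 - r^2 K} = \frac{r^2}{R^2 - r^2}, \qquad R := 1/\sqrt{K}.$$

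First I would prove $(3) \Longleftrightarrow (2)$. Given $\mathrm{m}_{\mathrm{H}}(r) = r^m K/2$ on $(a,b)$, the ODE above yields (after taking the positive square root as required by Lemma~\ref{lem-graph}) $z(r) = -\sqrt{R^2 - r^2} + c$, which is precisely a hemispherical graph of the Euclidean sphere of radius $R$ over the annular region $r \in (a,b) \subset (0,R)$. Conversely, if $r^{-1}(a,b)$ is such a spherical annulus, then substituting $z(r) = \pm\sqrt{R^2 - r^2} + c$ into the Hawking mass formula recovers $\mathrm{m}_{\mathrm{H}}(r) = r^m K/2$.

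Next I would handle $(1) \Longleftrightarrow (2)$. The direction $(2) \Longrightarrow (1)$ is classical since the round sphere of radius $R$ has constant sectional curvature $1/R^2 = K$. For $(1) \Longrightarrow (2)$, I would use the warped-product form $g = ds^2 + f(s)^2 g_0$ from \S\ref{ss-setting} together with the standard fact that constant sectional curvature $K > 0$ forces $f''(s) = -Kf(s)$ with $(f')^2 + Kf^2 = 1$ in this coordinate system; the solution $f(s) = \sin(\sqrt{K}(s-s_0))/\sqrt{K}$ is the warping function of the round sphere of radius $R = 1/\sqrt{K}$, so $r^{-1}(a,b)$ is isometric to the corresponding spherical annulus.

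No step should present real difficulty: the only care needed is with sign conventions (by Lemma~\ref{lem-graph}, $z'(r) \geq 0$, so one works with one of the two hemispherical branches on an annular region $(a,b) \subset (0, R)$ that avoids the equator $r = R$ where $z'$ blows up). This is consistent with $\mathrm{m}_{\mathrm{H}}(r) < r^{m-2}/2$ from Lemma~\ref{lem-ex}, which forces $r^2 K < 1$, i.e.\ $r < R$, on the region in question.
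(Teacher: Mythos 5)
Your proof is correct and for the core equivalence $(2)\iff(3)$ it is essentially the same computation the paper uses: plug $z' = r/\sqrt{R^2-r^2}$ into the graph Hawking-mass formula of Lemma~\ref{lem-graph}, which the paper does in the equivalent form $z' = -r/(z-\zeta)$. The paper treats $(1)\iff(2)$ as implicit, whereas you spell it out via the warped-product ODEs $f''=-Kf$, $(f')^2+Kf^2=1$; that extra detail is welcome but does not change the approach.
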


\begin{proof}
If it is an annulus in a sphere, then $(z-\zeta)^2+r^2=1/K$, so
$2(z-\zeta) z' +2r=0$ and thus $z'=-r/(z-\zeta)$ and 
by Lemma~\ref{lem-graph}
\be
\mathrm{m}_{\mathrm{H}}(r)=\frac{r^{m-2}}{2}\frac{r^2/(z-\zeta)^2}{1+r^2/(z-\zeta)^2} = \frac{r^m}{2(r^2+(z-\zeta)^2)}=\frac{r^mK}{2}.
\ee
On the other hand, if $\mathrm{m}_{\mathrm{H}}(r)=r^3K/2$, then 
(\ref{lem-ex-m-H-to-z}) defines a function $z(r)$ uniquely
up to a constant.  Since $z(r)=\sqrt{(1/K-r^2} + \zeta$
satisfies the equation, the graph is an annulus in a sphere of radius $1/K^{1/2}$.
\end{proof}

\begin{example}\label{ex-stripes}
Fix  $\delta>0$.
Given any increasing sequence, 
\be
\{r_1, r_2,...\}\subset [\mathrm{m}_{\mathrm{fix}}/2,\infty),
\ee
there exists $M^3\in \RS_3$ with constant
sectional curvature on stripes $r^{-1}(a_j,b_j)$
where $(a_j, b_j)\subset [r_{2j-1}, r_{2j}]$
and $\mathrm{m}_{\mathrm{ADM}}(M)<\delta$ 
and $\partial M=\emptyset$.
\end{example}

\begin{proof}
Recall that an admissable Hawking function need only be increasing
and satisfy
\be
\mathrm{m}_{\mathrm{H}}(r) \le h(r):=\min\{r/2, \mathrm{m}_{\mathrm{ADM}}\}.
\ee
For each $j$, choose the sectional curvature for the $j^{th}$
annulus to be $K_j$ satisfying
$
r_{2j}^3K_j /2 =h(r_{2j}).   
$
Observe that $K_j$ is a decreasing sequence and 
\be
r^3 K_{j+1}/2 < r^3K_j/2 < h(r) \textrm{ for } r < r_{2j}.
\ee
We now define $a_j< b_j$ inductively.  Let $a_1=r_1$.
So $a_1^3K_1/2< h(a_1)$.

Next choose $b_j\in (a_j,(a_j+ r_{2j})/2)$ satisfying
\be
b_j^3 K_j/2 \le  (a_j^3 K_1/2 + h(b_j))/2 < h(b_j).
\ee
Finally choose $a_{j+1} \in (r_{2j+1}, r_{2j+2})$ satisfying
\be
a_{j+1}^3 K_{j+1}/2 \in (b_j^3 K_j/2, h(a_{j+1}) )
\ee
which exists by our choice of $K_j$.  We can choose
any smooth increasing function 
$\mathrm{m}_{\mathrm{H}}: [0, \infty) \to [0, \delta)$
such that
\be
\mathrm{m}_{\mathrm{H}}(r)= r^3 K_j/2 \textrm{ for } r \in [a_j,b_j]
\ee
and then apply Lemma~\ref{lem-stripe} and~\ref{lem-ex}. 
\end{proof}

\begin{example} \label{ex-noncompact}
There exists a sequence of asymptotically flat manifolds
$M_i^3$ with no interior minimal surfaces and empty boundary
and $\lim_{i\to\infty} \mathrm{m}_{\mathrm{ADM}}(M_i) = 0$ 
such that for any $\alpha_0, D>0$
the sequence of regions $T_D(\Sigma)\subset M_i$
where $\vol_{2}(\Sigma)=\alpha_0$ converge in the
intrinsic flat sense to $T_D(\Sigma)\subset
\E^m$
but do not even have Lipschitz or Gromov-Hausdorff
converging subsequences.   
\end{example}


Recall that Gromov's Compactness Theorem states that
a sequence of compact metric spaces $X_j$
has a subsequence converging
to a compact metric space $X$ if and only if there is
a uniform bound on the number of disjoint balls of any
given radius in the space \cite{Gromov-metric}.  In particular,
a sequence of pointed  Riemannian manifolds $(M_j,p_j)$
has no subsequence
converging in the pointed Gromov-Hausdorff sense if there
is no uniform bound on the number $N(r,R)$ of
disjoint balls of radius $r$ lying in $B_{p_j}(R)$.
Here we will construct such a sequence of $M_j$ by 
gluing in increasingly many thin deep wells 
each of which contains a ball of radius $r$.

\begin{proof}
Fix $i \in \N$, $\delta=1/i$,
 and choose a sequence $r_j=  j / i$.  
 Then by Lemma~\ref{lem-stripe},
there exists $\bar{M}^3\subset \RS_3$ with $\mathrm{m}_{\mathrm{ADM}}
(M^3)=1/i$
that has stripes of
constant sectional curvature on annular regions
\be
r^{-1}(a_j,b_j)\subset r^{-1}[(2j-1)/i, {2j}/i].
\ee
By Schoen-Yau and Gromov-Lawson \cite{Schoen-Yau-tunnels}
\cite{Gromov-Lawson-tunnels}, we can remove
arbitrarily small balls, $B_{q_j}(\rho_j)\subset r^{-1}(a_j,b_j)$
for $j=1$ to $(2i)^2$ and 
attach arbitrarily thin and deep wells, $W_j$, to each of
these annular regions while maintaining nonnegative scalar
curvature and without changing the metric outside the 
removed balls.   In particular we can ensure that
all the attached wells, $W_j$, have a depth 
\be
\max\{d(x, \partial W_j): \, x\in W_j\}=2D
\ee
and we can ensure that 
\be
\sum_{j=1}^{(2i)^2} \left(\vol_m(W_j) +\vol_{m=1}(W_j) + 
\vol_m(B_{q_j}(\rho_j)) + \vol_m(\partial B_{q_j}(\rho_j)) \right) < 1/i.
\ee
and
$\diam(\partial B_{q_j}(\rho_j)) < d_i$.   We can also
require that all wells satisfy 
\be \label{where-wells}
W_j \subset T_D(\Sigma)\setminus \left(T_{D/10}(\Sigma)
\cup T_{D/10}(\partial \bar{M}_j)\right).
\ee
This gives us a non-rotationally symmetric manifold
$M^3$ which is asymptotically flat with $\mathrm{m}_{\mathrm{ADM}}(M)=\delta$
such that for $\Sigma=r^{-1}(s_0)$ where $s_0$ is rational we have
\be
d_{\mathcal{F}}(T_D(\Sigma)\subset M^3, 
T_D(\Sigma)\subset\bar{M}^3) < 1/i + \sqrt{C_i(C_i+2D+\pi r(\Sigma))}
\ee
where $C_i$ is the embedding constant of 
\be
\varphi_i: M^3\setminus \bigcup W_j \to \bar{M}^3.
\ee
We may choose $d_i$ sufficiently small to guarantee 
$\lim_{i\to\infty} C_i \to 0$.  Applying Theorem~\ref{thm-main}
to $\bar{M}^3$ we have the claimed intrinsic flat convergence.

On the other hand for fixed $s_0$, and increasing $i$
we have increasingly many wells contained in$T_D(\Sigma)\subset M_i$.
Since each well has depth $2D$, the boundary of$T_D(\Sigma)\subset M_i$
has increasingly many components.  So clearly we do not have Lipschitz
convergence to$T_D(\Sigma) \subset m_{\mathrm{Sch}}$ even if
we take a subsequence.

Also observe that if $\partial W_j\subset T_{D/3}(\Sigma)\subset M_i$
then
\be
d_{M_i}\left(W_j \cap \partial T_{D}(\Sigma), \partial W_j\right) > D/3.
\ee
Thus balls of radius $D/3$ about
$p_j \in W_j\cap \partial T_{D/3}(\Sigma)$ 
are pairwise disjoint and contained in $T_D(\Sigma)$.  
So we have increasing number of
pairwise disjoint balls centered in $T_{D}(\Sigma)\subset M_i$
 and thus$T_D(\Sigma)\subset M_i$ have no 
 subsequences converging in the Gromov-Hausdorff sense 
 \cite{Gromov-metric}.

\end{proof}

\section{Conjectures and Open Problems}
\label{Sect-Open}

We now consider the general case of complete asymptotically 
flat manifolds with nonnegative scalar curvature.  We will restrict 
to dimension three, because we have the most tools available in 
dimension three.     We consider whether the Positive Mass Theorem
is stable in this setting:

\begin{defn}
Let $\mathcal{M}$ be a subclass of asymptotically flat
three dimensional Riemannian manifolds with nonnegative
scalar curvature and no interior closed minimal surfaces
and either no boundary or the boundary is an outermost
minimizing surface.
\end{defn}

\begin{conj}\label{conj-main}   
Given any $\epsilon>0$, $D>0$, $\alpha_0>0$, 
there exists a $\delta=\delta(\epsilon, D, \alpha_0)>0$
such that if $M^3\in\mathcal{M}$ has ADM mass 
$\madm(M)<\delta$ and $\E^3$ is Euclidean space.
Then
\be
 d_{\mathcal{F}}\left(\,T_D(\Sigma_{\alpha_0})\subset M^3\,,\,T_D(\Sigma_{\alpha_0})\subset \E^3\,\right)\,<\, 
 \epsilon. 
 \ee
 where  $\Sigma_{\alpha_0}$ is a {\em special surface} of area  
$\vol_{2}(\Sigma_{\alpha_0})=\alpha_0$, and
$T_D(\Sigma_{\alpha_0})$ is the 
tubular neighborhood of radius $D$ around $\Sigma_{\alpha_0}$.   
 \end{conj}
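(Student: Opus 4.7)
The plan is to replace the graph-embedding machinery of the rotationally symmetric case by a foliation produced by Huisken--Ilmanen weak inverse mean curvature flow (IMCF), or by Bray's conformal flow. Starting the weak IMCF either at the outermost minimizing boundary or at a suitably regular initial surface, one obtains a family $\{\Sigma_t\}$ along which Geroch monotonicity squeezes the Hawking mass into $[0,\madm(M)]\subset[0,\delta)$. In particular, once the flow has passed any jumps, each leaf has Willmore energy $\int_{\Sigma_t} H^2$ within $O(\delta)$ of $16\pi$, is topologically a sphere, and — by a quantitative De~Lellis--M\"uller-type rigidity result — is $C^{0}$-close to a round sphere of the same area after recentering in $\mathbb{R}^3$. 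This is the natural general-case replacement for \lemref{lem-F'}: instead of pointwise control of a graph slope, one gets leafwise control of extrinsic geometry.

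First I would fix the ``special surface'' $\Sigma_{\alpha_0}$ to be a regular IMCF leaf of area $\alpha_0$ (existing for almost every area level) and then construct an explicit Riemannian quasi-isometry $\Phi: U \to \E^3$ on an annular region $U \subset M^3$ containing $T_D(\Sigma_{\alpha_0})\setminus K$, where $K$ is a ``core'' set to be cut off. The map $\Phi$ would send a point on $\Sigma_t$ to a round sphere $\partial B_0(r(t))\subset\E^3$ of equal area using the almost-isometry between $\Sigma_t$ and the round model. Next I would bound the embedding constant $C_U$ in the sense of \thmref{thm-embed-const}, showing $C_U \to 0$ as $\delta \to 0$, and then invoke \propref{embed-const-2} with $U_1=U$ and $U_2$ the matching Euclidean annulus to obtain an explicit filling in $U\times[0,S_U]$ glued to a flat region. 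The inner ``bad core'' $K\subset M^3$ (analogous to $r^{-1}[r_{\min},r_\epsilon]$) would contribute to the excess boundary via its volume alone, which is the key reason the intrinsic flat, rather than Gromov--Hausdorff, distance is the right object — the cut-off lemma, analogous to \lemref{lem-well}, must be recast in terms of volumes and areas instead of radii.

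The hard part will be twofold. First, the quantitative almost-rigidity for Willmore-almost-round surfaces must be uniform in the flow parameter $t$ and robust against IMCF jumps: existing De~Lellis--M\"uller estimates give qualitative $W^{2,2}$-proximity to a round sphere for a single surface, but for the embedding constant to vanish uniformly one needs effective $C^0$-estimates on an entire foliation, plus compatibility of the ``angular'' identifications between consecutive leaves. Second, and more seriously, the Bray--Lee drawstring examples (and the higher-dimensional non-symmetric construction of \exref{ex-noncompact}) indicate that the bad set $K$ where the foliation degenerates can be intricate: wells may branch, migrate, or accumulate, so one must perform a stopping-time analysis along the flow — stopping leaf-by-leaf when the Willmore deficit or a local area ratio exceeds a threshold — and prove that the stopped region has small volume and small boundary area depending only on $\delta,D,\alpha_0$. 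Handling this core is the principal obstacle, since only with a volume-based excision can one profit from the intrinsic flat distance to absorb phenomena that destroy Gromov--Hausdorff compactness.
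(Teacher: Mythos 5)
The statement you are addressing is Conjecture~\ref{conj-main}, which the paper explicitly leaves open: Section~\ref{Sect-Open} is titled ``Conjectures and Open Problems,'' and no proof is given. So there is no ``paper's own proof'' to compare against; what the paper does offer, in the remarks following the conjecture, is a menu of candidate approaches (CMC foliations via Huisken--Yau and Bray, smooth and weak IMCF via Geroch and Huisken--Ilmanen, spinor level sets via Finster, and the graph case via Lam). Your sketch sits squarely inside the IMCF branch of that menu, so it is consistent with the authors' intentions, but you should be aware you are proposing a research program for an open problem, not reconstructing an existing argument.

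That said, your proposal correctly identifies where the real difficulties live, and they are genuine gaps rather than details to be filled in. First, passing from Geroch monotonicity (small Willmore deficit on each leaf) to a \emph{uniform} $C^0$ parametrization of the entire foliation over a round annulus is not supplied by De~Lellis--M\"uller as it stands: their estimates are per-surface and $W^{2,2}$, and you need compatibility of the recenterings and sphere-parametrizations across all $t$ in a $D$-neighborhood, together with control of the normal speed $1/H$, before you can even write down a candidate map $\Phi$ to which Theorem~\ref{thm-embed-const} and Proposition~\ref{embed-const-2} apply. No such uniform foliation-rigidity statement is currently known. Second, the cut-off step is not merely ``analogous to Lemma~\ref{lem-well}'': in the rotationally symmetric case the bad set is a single centered well whose volume is controlled by the area--radius relation, whereas in general the stopped region $K$ can have many components (the increasingly dense wells of Example~\ref{ex-noncompact} are the warning sign), and one must bound $\vol_3(K)$ and $\vol_2(\partial K)$ purely from $\delta$, $D$, $\alpha_0$. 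You flag this, correctly, as the principal obstacle, but you do not propose a mechanism for it; a stopping-time criterion on the Willmore deficit alone does not obviously yield a volume bound. Finally, Huisken--Ilmanen weak IMCF jumps past exactly the regions you most need to measure, so the ``excision by volume'' step requires an independent estimate on the jumped-over sets, which is again open. In short: the architecture of your argument (quasi-isometry on a good annulus, embedding constant via Theorem~\ref{thm-embed-const}, intrinsic-flat excision of a small-volume core via Proposition~\ref{embed-const-2}) is the right scaffolding and matches the paper's own suggestions, but the two analytic inputs you would need --- uniform quantitative rigidity for an IMCF foliation and a volume bound on the degenerate core --- are precisely the unsolved parts, and your proposal does not close them.
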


We are deliberately vague as to the strength of our 
condition of {\em asymptotical flatness} in the definition
of $\mathcal{M}$.  The conjectures may
require strong conditions at infinity.
We have also been vague as to what the {\em special surface}, $\Sigma$,
should be.   We know the special surface must somehow
avoid wells but also be uniquely defined in Euclidean
space up to isometry.   We provide possible choices for
the strength of the asymptotic flatness and special surface in
the following remarks.


\begin{rmrk}{\em
Another possible choice of special surface, $\Sigma$, is
a Constant Mean Curvature surface.  One could say $\Sigma$ achieves an
isoperimetric condition: the surface enclosing the maximal
volume for its given area $\alpha_0$.   Note in Bray's thesis
it is proven that such a $\Sigma$ exists if it is connected \cite{Bray-thesis}.
One could for example assume that the
manifold has a smooth CMC foliation  down to $\Sigma$
with area $\alpha_0$.  Or one could just assume a smooth CMC
foliation exists  on $T_D(\Sigma)$ where $\Sigma$ is
a leaf  in the foliation with no such strong assumption at infinity.
There has been significant work on the existence of CMC foliations
and their properties beginning with Huisken-Yau \cite{Huisken-Yau}.
}\end{rmrk}

\begin{rmrk}{\em
A stronger condition on $\Sigma$ which might be viewed as 
a test case for the prior remark
would be to require positive Gauss curvature
or possibly even  lying in a foliation of such surfaces.   Nirenberg
proved that such $\Sigma$ isometrically embed into Euclidean space 
\cite{Nirenberg-surface} which we have shown provides a metric
isometric embedding in Theorem~\ref{thm-convex}.    Such surfaces have
a well defined quasi-local mass defined by Liu-Yau \cite{Liu-Yau-2003} 
based on work of Shi-Tam \cite{Shi-Tam}
which would be controlled by the ADM mass at infinity.}
\end{rmrk}

\begin{rmrk}\label{rmrk-smooth-IMCF}{\em
A possible choice of special surface, $\Sigma$, is
that it be a level set of Inverse Mean Curvature Flow from a point
or from the boundary of $M$.   One might assume
the manifold has a smooth IMCF in the conjecture or one might
assume only that the IMCF is smooth on a neighborhood containing
$T_D(\Sigma)$.   Geroch proved that smooth IMCF has a monotone
Hawking mass \cite{Geroch-monotonicity}, so it should be possible to control the metric in
a way somewhat similar to the way in which we applied
monotonicty of the Hawking mass to provide Lipschitz controls 
on our rotationally symmetric metrics.
}\end{rmrk}

\begin{rmrk}{\em
Huisken-Ilmanen extended the IMCF using Geometric Measure Theory
to prove the Penrose Conjecture (and reprove the Positive Mass Theorem)
\cite{Huisken-Ilmanen}.
Their proof uses a weak Inverse Mean Curvature Flow with a monotone
quasilocal mass.   Conjecture~\ref{conj-main}
might hold on any manifold satisfying the conditions of their theorem
where $\Sigma$ is a level set of their flow.  Many difficulties would
arise when trying to prove this.   Since
weak IMCF jumps over regions likes wells, one would need to
control the volumes of those regions separately.
}\end{rmrk}

\begin{rmrk}
One might consider the case where $M^3$ is a Spin manifold and apply
the work of Finster
\cite{Finster}.
Finster bounds the areas of level sets of spinors and controls the
$L^2$ norms of the curvature
tensor.  It is possible that level sets of spinors provide an
appropriate choice for the special
surface $\Sigma_0$ although we have not investigated this closely.
\end{rmrk}

\begin{rmrk}{\em
One might consider the case where $M^m$ is a graph in
Euclidean space.  Here one could examine the situation with many
wells and explicitly cut them out.  One could apply Theorem~\ref{thm-Z}
directly to find a filling manifold.   In the graph setting one might
test out various conditions at infinity and choices of special surface
$\Sigma$ perhaps even using numerical methods to solve IMCF
and find CMC surfaces.   Lam has provided a new short proof
of the Positive Mass Theorem in the graph setting which may
prove useful to those attempting to prove the conjecture in this
case \cite{Lam-graph}.
}\end{rmrk}

One may also consider the stability of the Penrose Inequality.
The authors have completed an investigation of this in
\cite{LeeSormani2}.  In fact, the Penrose Inequality is not
even stable in the rotationaly symmetric case.  However
sequences of manifolds approaching equality in the
Penrose inequality do have subsequences which
converge in the pointed intrinsic flat sense to manifolds
which are Schwarscshild spaces outside their outermost
minimal surface.  In fact, far stronger convergence can be
obtained as there are no thin central wells just
deep horizon central horizons which the authors prove converge
to cylinders of various lengths in the Lipschitz sense.   
The authors also prove Lipschitz convergence outside of the
central well in the Positive Mass setting in that paper.  
Without rotational symmetry, the authors provide an 
example with increasingly dense thin deep wells much like the example in this paper \cite{LeeSormani2}.
Thus one expects at best pointed intrinsic flat convergence without
rotational symmetry for almost equality of the Penrose Equality.

We close this paper  with a call for the investigation of a scalable
version of the Intrinsic Flat Distance.    

\begin{rmrk}\label{rmrk-scaling-open}{\em
Recall that the Intrinsic Flat Distance is the sum of a volume and an
area in (\ref{eqn-def-intrinsic-flat-1}).  This is a consequence of the
fact that the Intrinsic Flat Distance defined in \cite{SorWen2}
is based on the flat distance of Federer-Fleming \cite{FF} which is
a norm:
\be
d_{F}(T_1,T_2)=
|T_1 - T_2|_{\flat}= \inf \left\{M_{m}(A)+M_{m+1}(B): A+\partial B=T_1-T_2\right\}.
\ee 
One may immediately consider a related scalable intrinsic flat
distance which abandons the norm properties in favor of scalability
so that
\be
d_{sF}(T_1, T_2)= \inf \left\{M_{m}(A)^{1/m}+M_{m+1}(B)^{1/(m+1)}: 
A+\partial B=T_1-T_2\right\}.
\ee 
This is still a distance since it is nonnegative, symmetric, satisfies
the triangle inequality and 
\be
d_{sF}(T_1, T_2)=0 \iff d_{F}(T_1,T_2)=0 \iff T_1=T_2.
\ee
This can be seen by taking $A_i, B_i$ with $A_i+\partial B_i=T_1-T_2$
approaching the infimum and observing that $M(A_i), M(B_i) \to 0$
since masses of integral currents are nonnegative.

Thus one might consider defining an intrinsic scalable flat 
distance, $d_{s\mathcal{F}}$ between Riemannian manifolds
such that
\be
d_{s\mathcal{F}}(M^m_1, M^m_2) \le 
\vol_{m+1}\left(B^{m+1}\right)^{1/(m+1)} 
+\vol_m\left(A^m\right)^{1/m}
\ee
much as in \cite{SorWen2} and investigating which theorems
hold as they stand and which need adapting.  This investigation
would involve looking deeper than just this paper as the norm
properties were applied on more than one occasion and in 
citations.   

See Remark~\ref{rmrk-scaling} for information about estimating
this scalable flat distance in the rotationally symmetric case
of the almost inequality in the Positive Mass Theorem.  
}\end{rmrk}

While this final remark suggests a problem which would involve a 
strong understanding of geometric measure theory, we believe
other problems suggested in this paper are really questions of
geometric analysis.

\bibliographystyle{plain}
\bibliography{2011}

\begin{thebibliography}{10}

\bibitem{AK}
Luigi Ambrosio and Bernd Kirchheim.
\newblock Currents in metric spaces.
\newblock {\em Acta Math.}, 185(1):1--80, 2000.

\bibitem{ADM-mass}
R.~Arnowitt, S.~Deser, and C.~W. Misner.
\newblock Coordinate invariance and energy expressions in general relativity.
\newblock {\em Phys. Rev. (2)}, 122:997--1006, 1961.

\bibitem{Bartnik-1986}
Robert Bartnik.
\newblock The mass of an asymptotically flat manifold.
\newblock {\em Comm. Pure Appl. Math.}, 39(5):661--693, 1986.

\bibitem{Bray-thesis}
Hubert Bray.
\newblock The {P}enrose inequality in general relativity and volume comparison
  theorems involving scalar curvature (thesis).
\newblock {\em unpublished}.

\bibitem{Bray-Finster}
Hubert Bray and Felix Finster.
\newblock Curvature estimates and the positive mass theorem.
\newblock {\em Comm. Anal. Geom.}, 10(2):291--306, 2002.

\bibitem{Corvino}
Justin Corvino.
\newblock A note on asymptotically flat metrics on {${\Bbb R}^3$} which are
  scalar-flat and admit minimal spheres.
\newblock {\em Proc. Amer. Math. Soc.}, 133(12):3669--3678 (electronic), 2005.

\bibitem{FF}
Herbert Federer and Wendell~H. Fleming.
\newblock Normal and integral currents.
\newblock {\em Ann. of Math. (2)}, 72:458--520, 1960.

\bibitem{Finster}
Felix Finster.
\newblock A level set analysis of the {W}itten spinor with applications to
  curvature estimates.
\newblock {\em Math. Res. Lett.}, 16(1):41--55, 2009.

\bibitem{Finster-Kath}
Felix Finster and Ines Kath.
\newblock Curvature estimates in asymptotically flat manifolds of positive
  scalar curvature.
\newblock {\em Comm. Anal. Geom.}, 10(5):1017--1031, 2002.

\bibitem{Geroch-monotonicity}
R.~Geroch.
\newblock Energy extraction.
\newblock {\em Ann. New York Acad. Sci.}, 224:108Ð117, 1973.

\bibitem{Gibbons}
G.~W. Gibbons.
\newblock The isoperimetric and {B}ogomolny inequalities for black holes.
\newblock In {\em Global {R}iemannian geometry ({D}urham, 1983)}, Ellis Horwood
  Ser. Math. Appl., pages 194--202. Horwood, Chichester, 1984.

\bibitem{Gromov-filling}
Mikhael Gromov.
\newblock Filling {R}iemannian manifolds.
\newblock {\em J. Differential Geom.}, 18(1):1--147, 1983.

\bibitem{Gromov-Lawson-tunnels}
Mikhael Gromov and H.~Blaine Lawson, Jr.
\newblock Spin and scalar curvature in the presence of a fundamental group.
  {I}.
\newblock {\em Ann. of Math. (2)}, 111(2):209--230, 1980.

\bibitem{Gromov-metric}
Misha Gromov.
\newblock {\em Metric structures for {R}iemannian and non-{R}iemannian spaces},
  volume 152 of {\em Progress in Mathematics}.
\newblock Birkh\"auser Boston Inc., Boston, MA, 1999.
\newblock Based on the 1981 French original [ MR0682063 (85e:53051)], With
  appendices by M. Katz, P. Pansu and S. Semmes, Translated from the French by
  Sean Michael Bates.

\bibitem{Huisken-Ilmanen}
Gerhard Huisken and Tom Ilmanen.
\newblock The inverse mean curvature flow and the {R}iemannian {P}enrose
  inequality.
\newblock {\em J. Differential Geom.}, 59(3):353--437, 2001.

\bibitem{Huisken-Yau}
Gerhard Huisken and Shing-Tung Yau.
\newblock Definition of center of mass for isolated physical systems and unique
  foliations by stable spheres with constant mean curvature.
\newblock {\em Invent. Math.}, 124(1-3):281--311, 1996.

\bibitem{Jang-positive-energy}
Pong~Soo Jang.
\newblock On the positive energy conjecture.
\newblock {\em J. Mathematical Phys.}, 17(1):141--145, 1976.

\bibitem{Lam-graph}
George Lam.
\newblock The graph cases of the {R}iemannian positive mass theorem and the
  {P}enrose inequalities in all dimension.
\newblock {\em preprint}.

\bibitem{Lee-near-equality}
Dan~A. Lee.
\newblock On the near-equality case of the positive mass theorem.
\newblock {\em Duke Math. J.}, 148(1):63--80, 2009.

\bibitem{LeeSormani2}
Dan~A. Lee and Christina Sormani.
\newblock Almost inequality in the penrose equality for rotationally symmetric
  manifolds.
\newblock {\em to appear in Annales Henri Poincare}.

\bibitem{Leibovitz-1970}
Clement Leibovitz.
\newblock A point mass in a {E}instein {U}niverse.
\newblock {\em Comm. Math. Phys.}, 17(2):177--178, 1970.

\bibitem{Liu-Yau-2003}
Chiu-Chu~Melissa Liu and Shing-Tung Yau.
\newblock Positivity of quasilocal mass.
\newblock {\em Phys. Rev. Lett.}, 90(23):231102, 4, 2003.

\bibitem{Misner-astro}
C.~Misner.
\newblock Astrophysics and general relativity.
\newblock 1971.

\bibitem{Nirenberg-surface}
Louis Nirenberg.
\newblock The {W}eyl and {M}inkowski problems in differential geometry in the
  large.
\newblock {\em Comm. Pure Appl. Math.}, 6:337--394, 1953.

\bibitem{Schoen-Yau-tunnels}
R.~Schoen and S.~T. Yau.
\newblock On the structure of manifolds with positive scalar curvature.
\newblock {\em Manuscripta Math.}, 28(1-3):159--183, 1979.

\bibitem{Schoen-Yau-positive-mass}
Richard Schoen and Shing~Tung Yau.
\newblock On the proof of the positive mass conjecture in general relativity.
\newblock {\em Comm. Math. Phys.}, 65(1):45--76, 1979.

\bibitem{Schoen-1989}
Richard~M. Schoen.
\newblock Variational theory for the total scalar curvature functional for
  {R}iemannian metrics and related topics.
\newblock In {\em Topics in calculus of variations (Montecatini Terme, 1987)},
  volume 1365 of {\em Lecture Notes in Math.}, pages 120--154. Springer,
  Berlin, 1989.

\bibitem{Shi-Tam}
Yuguang Shi and Luen-Fai Tam.
\newblock Positive mass theorem and the boundary behaviors of compact manifolds
  with nonnegative scalar curvature.
\newblock {\em J. Differential Geom.}, 62(1):79--125, 2002.

\bibitem{SorWen1}
Christina Sormani and Stefan Wenger.
\newblock Weak convergence and cancellation, appendix by {R}aanan {S}chul and
  {S}tefan {W}enger.
\newblock {\em Calculus of Variations and Partial Differential Equations},
  38(1-2), 2010.

\bibitem{SorWen2}
Christina Sormani and Stefan Wenger.
\newblock Intrinsic flat convergence of manifolds and other integral current
  spaces.
\newblock {\em Journal of Differential Geometry}, 87, 2011.

\bibitem{Whitney}
Hassler Whitney.
\newblock {\em Geometric integration theory}.
\newblock Princeton University Press, Princeton, N. J., 1957.

\bibitem{Witten-positive-mass}
Edward Witten.
\newblock A new proof of the positive energy theorem.
\newblock {\em Comm. Math. Phys.}, 80(3):381--402, 1981.

\end{thebibliography}

\end{document}